\definecolor{darkblue}{rgb}{0.0, 0.0, 0.55}
\newcommand{\MCF}{multiple context-free}
\newcommand{\RMCFG}{R-MCFG}
\newtheorem{theorem}{Theorem}[section]
\newtheorem{proposition}[theorem]{Proposition}
\newtheorem{lemma}[theorem]{Lemma}
\newtheorem{corollary}[theorem]{Corollary}
\newtheorem{theoremx}{Theorem}
\newtheorem{conjecturex}[theoremx]{Conjecture}
\theoremstyle{definition}
\newtheorem{example}[theorem]{Example}
\newtheorem{definition}[theorem]{Definition}
\newtheorem{remark}[theorem]{Remark}
\renewcommand{\leq}{\leqslant}
\renewcommand{\geq}{\geqslant}
\DeclareMathOperator{\WP}{WP}
\newcommand\leftSide\triangleleft
\newcommand\rightSide\triangleright
\newcommand\Affix{\mathrm{Affix}}
\newcommand\Strip{\mathrm{Strip}}
\newcommand\Rem{\mathrm{Rem}}
\newcommand\MonoidHom{\Delta}
\newcommand\Segment{\mathrm{seg}}
\newcommand\Reverse{\mathrm{Mirror}}
\title{On groups with EDT0L word problem}
\author[A.~Bishop]{Alex Bishop}
\address{
Section de mathématiques,
Université de Genève,
rue du Conseil-Général 7-9,
1205 Genève, Switzerland}
\email{alexbishop1234@gmail.com}
\author[M.~Elder]{Murray Elder}
\address{School of Mathematical and Physical Sciences, University of Technology Sydney, Broadway NSW 2007, Australia}
\email{murray.elder@uts.edu.au}
\author[A.~Evetts]{Alex Evetts}
\address{Department of Mathematics, The University of Manchester, Manchester M13 9PL, UK}
\email{alex.evetts@manchester.ac.uk}
\author[P.~Gallot]{Paul Gallot}
\address{
Database Group, Universität Bremen - FB 03
Postfach 33 04 40, 
28334 Bremen,
Germany}
\email{pgallot@uni-bremen.de}
\author[A.~Levine]{Alex Levine}
\address{School of Engineering, Mathematics and Physics, University of East Anglia, Norwich NR4 7TJ, UK}
\email{a.levine@uea.ac.uk}
\date{}
\subjclass[2020]{20F10, 68Q42, 20F65}
\keywords{word problem, EDT0L language, finite-index EDT0L grammar}
\begin{document}

\begin{abstract}
	We prove that the word problem for the infinite cyclic group is not EDT0L, and obtain as a corollary that a finitely generated group with EDT0L word problem must be torsion.
  In addition, we show that the property of having an EDT0L word problem is invariant under change of generating set, and passing to finitely generated subgroups. This represents significant progress towards the conjecture that all groups with EDT0L word problem are finite (i.e.~precisely the groups with regular word problem).
\end{abstract}
\maketitle

\section{Introduction}\label{sec:introduction}

An interesting problem is to classify finitely generated groups in terms of  the 
language complexity of their \emph{word problem}, the set of all words over generators and their inverses which spell the identity of the group.
The formal study of this problem began when Anisimov~\cite{Anisimov1971} first observed that the word problem of a group is regular if and only if the group is finite.
An influential result of Muller and Schupp~\cite{MullerSchupp} later showed that the word problem of a group is context-free if and only if the group is virtually free. Moreover, the following equivalences have been shown:  the word problem is one-counter if and only if the group is virtually cyclic~\cite{Herbst}; the intersection of finitely many one-counter languages if and only if the group is virtually abelian~\cite{HoltOwensThomas};  blind multicounter if and only if the group is virtually abelian \cite{EKambOst,Yuyama}; a language accepted by a Petri net if and only if the group is virtually abelian \cite{PetriNet}; an  NTS language if and only if the group is virtually free \cite{Sen}.

The family of EDT0L (Extended alphabet, Deterministic, Table, 0-interaction, Lindenmayer) languages have recently received a great amount of interest within geometric group theory and related areas~\cite{BartholdiPres,BE-coET0L,BroughPerms,Duncan,CDEfree,CiobanuElder2021,CiobanuElderFerov,DiekertRAAGsArxiv,JAIN201968,LeHe2018,Levine2023}.
In this paper, we also consider the family of \emph{finite-index} EDT0L languages.
\Cref{fig:EDTdiagram} shows the relative expressive power of these families of languages, where each forwards arrow represents a strict inclusion.
In particular, the family of EDT0L languages contains the family of finite-index EDT0L languages as a strict subfamily; and the family of context-free languages is incomparable (in terms of set inclusion) with the families of EDT0L and finite-index EDT0L languages.

\begin{figure}[ht]
	\centering
  \begin{tikzpicture}[>=stealth]
    \node (REG) at (0,0) {regular};
    \node (fiEDT0L)[right=2.5em of REG] {\begin{tabular}{c}
         finite-index \\EDT0L
    \end{tabular}};
    \node (EDT0L)[right=2.5em of fiEDT0L] {EDT0L};
    \node (ET0L)[right=2.5em of EDT0L] {ET0L};
    \node (IDX)[right=2.5em of ET0L] {indexed};
    \node (CS)[right=2.5em of IDX] {context-sensitive};

    \coordinate (midpoint) at ($(REG.east)!0.5!(ET0L.west)$); 

    \node (CF)[above=3.5em of midpoint] {context-free};

    \draw[->](REG)--(fiEDT0L);
    \draw[->](fiEDT0L)--(EDT0L);
    \draw[->](EDT0L)--(ET0L);
    \draw[->](ET0L)--(IDX);
    \draw[->](IDX)--(CS);

    \draw[->] (REG.north east) -- (CF.south west);
    \draw[->] (CF.south east) -- (ET0L.north west);
	\end{tikzpicture}

  \caption{Inclusion diagram of formal languages (see Theorem~15 in \cite{Rozenberg1978} and Figure~8 in~\cite{engelfriet1980stack}).}\label{fig:EDTdiagram}
\end{figure}

We begin by showing that having an EDT0L word problem is invariant under change of generating set, which is not immediate since EDT0L languages are not closed under inverse monoid homomorphism \cite{ER1976}.
From the following proposition, it makes sense to speak of a word problem being EDT0L without the need to specify a generating set.
Let $\mathrm{WP}(G,X)$ and $\mathrm{coWP}(G,X)$ denote the word and coword problem, respectively, for a group $G$ with respect to the generating set $X$.

\begin{restatable}{propositionx}{CorInvariantsDFSM}\label{lem:taking-a-submonoid-of-EDT0L}
	Let $G$ be a group with finite monoid generating sets $X$ and $Y$.
	If $\mathrm{WP}(G,X)$ (resp.\@ $\mathrm{coWP}(G,X)$) is EDT0L, then $\mathrm{WP}(G,Y)$ (resp.\@ $\mathrm{coWP}(G,Y)$) is EDT0L of finite index.
	These results also hold if $Y$ instead generates a submonoid of $G$.
    In particular, this implies that, if a word problem is EDT0L, then it is EDT0L of finite index; and that having a word problem that is EDT0L of finite index is independent of choice of generating set.
\end{restatable}

The following conjecture was posed by Ciobanu, Ferov and the second author.

\begin{conjecturex}[Conjecture 8.2 in~\cite{CiobanuElderFerov}]
	If $G$ has an EDT0L word problem, then $G$ is finite.\label{conj:main}
\end{conjecturex}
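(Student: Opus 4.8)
\emph{Proof proposal.} The conjecture, together with Anisimov's theorem and the inclusion $\text{regular}\subseteq\text{EDT0L}$ recorded in \Cref{fig:EDTdiagram}, is equivalent to the statement that a finitely generated group has EDT0L word problem if and only if it has regular word problem. The overall plan is therefore to assume $\mathrm{WP}(G,X)$ is EDT0L and to push towards regularity via a dichotomy on $G$: either $G$ contains an element of infinite order, or $G$ is an infinite torsion group (the case of finite $G$ being immediate, since a finite group has regular, hence EDT0L, word problem). The crucial tool for the reduction is \Cref{lem:taking-a-submonoid-of-EDT0L}: if $\mathrm{WP}(G,X)$ is EDT0L and $H\leq G$ is finitely generated, then $\mathrm{WP}(H,Y)$ is EDT0L for every finite generating set $Y$ of $H$. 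Consequently it is enough to (a) prove that $\mathrm{WP}(\mathbb{Z})$ is \emph{not} EDT0L, which rules out any $G$ with an element of infinite order (take $H=\langle g\rangle\cong\mathbb{Z}$), and then (b) rule out all infinite finitely generated torsion groups.

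For step (a) the plan is to suppose for contradiction that $L=\mathrm{WP}(\mathbb{Z},\{a,a^{-1}\})=\{w\in\{a,a^{-1}\}^* : |w|_a=|w|_{a^{-1}}\}$ is EDT0L and to work with the description of EDT0L languages in the form $L=\{\varphi(c):\varphi\in\mathcal{R}\}$, where $c$ is a start symbol over an extended alphabet $V\supseteq\{a,a^{-1}\}$ and $\mathcal{R}$ is a regular set of endomorphisms of $V^*$ generated by a finite table. The main obstacle here is that $L$ is closed under deletion of $aa^{-1}$, under cyclic rotation, and under reversal, so the standard ET0L pumping lemma is toothless: pumping a word such as $a^{N}a^{-N}$ can be absorbed symmetrically in both blocks and leaves the language invariant. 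The approach I would take is instead to analyse a derivation tree for a long test word of the shape $a^{N}a^{-N}$ and to track which occurrences of internal symbols of $V$ are ultimately responsible for the left block of $a$'s and which for the right block of $a^{-1}$'s. Since every such word is produced from the single seed $c$ by finitely many tables under regular control, there is only a bounded amount of "communication" between the two blocks; a careful accounting of how many $a$'s versus $a^{-1}$'s a bounded-memory derivation can commit to, as $N\to\infty$, should contradict exact equality. Choosing the right family of test words and the right numerical invariant of the derivation tree is the technical heart of this step.

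Step (b), the case of infinite torsion groups, is where I expect the genuine difficulty to lie, and it is the reason the statement is presently only a conjecture. Two complementary lines of attack seem available. The first is to extract a structural consequence of having an EDT0L word problem that is incompatible with being infinite and of bounded (or merely finite) exponent: for instance one might try to upgrade the combinatorial constraint obtained in step (a) so that it applies not just to balanced powers of one generator but to words witnessing the cancellation of high powers $g^{n}=1$, thereby forcing a uniform bound on element orders to collapse. The second is to attack distinguished families directly — free Burnside groups of large exponent, Tarski monsters, the Grigorchuk group and other self-similar groups — perhaps exploiting their recursive structure against the recursive (parallel-rewriting) structure of an EDT0L grammar. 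My expectation is that step (a) is within reach of current techniques (indeed it yields the torsion corollary quoted in the abstract), whereas step (b) requires a genuinely new idea about how parallel-rewriting languages can or cannot encode the cancellation of infinitely many relators of growing length; overcoming this is the main obstacle to a full proof.
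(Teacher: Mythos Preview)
The statement is a \emph{conjecture}; the paper does not prove it, and you correctly identify that step~(b) --- ruling out infinite torsion groups --- is the open case. So your overall diagnosis matches the paper's: step~(a) together with \Cref{lem:taking-a-submonoid-of-EDT0L} gives \Cref{thm:cormain}, and the remaining obstacle is exactly the one you flag.

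Where your proposal diverges from the paper is in the plan for step~(a), and here there is a genuine gap. Your sketch attacks $L=\mathrm{WP}(\mathbb Z)$ directly at the level of EDT0L derivations, using test words $a^N a^{-N}$ and an informal ``bounded communication'' argument between the two blocks. The paper's route is quite different and substantially more structured: it first shows (\Cref{prop:wp-edt0l-fi}) that an EDT0L word problem is automatically EDT0L of \emph{finite index}, then converts any finite-index EDT0L language into a non-permuting, non-erasing, non-branching multiple context-free grammar in a normal form whose rules are single-letter insertions and adjacent merges (\Cref{prop:equiv-langs}, \Cref{lem:normal-form}). Only after this reduction does the combinatorial argument run, and it runs not on $a^N b^N$ but on a carefully nested word $\mathcal W = a^{m^{2k}}\mathcal T_{2k}a^{m^{2k}}$ built from $2k$ recursive layers, with $k$ the rank of the normal-form grammar. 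The contradiction is obtained by defining a numerical invariant (an ``$n$-decomposition'' of the $k$-tuple appearing at each step of the derivation) and showing it is preserved under the inverse of every normal-form rule, yet is present at the top of the derivation and impossible at the bottom.

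The reason your direct approach is unlikely to succeed as stated is that a single pair of balanced blocks does not carry enough structure to defeat a grammar with $k$ parallel components: each component can absorb one block, and there is no depth to exploit. The paper's word $\mathcal W$ is engineered so that any derivation must, at some stage, commit $k$ components to long runs of $a$'s at $2k$ different scales, and the decomposition invariant tracks exactly this over-commitment. If you want to make your ``bounded communication'' intuition precise, the finite-index reduction and the multi-scale counterexample are the missing ingredients.
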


Towards this conjecture, we have the following results.
\begin{restatable}{theoremx}{MainTheorem}\label{thm:main}
	The word problem for the infinite cyclic group $\mathbb{Z}$ is not EDT0L.
\end{restatable}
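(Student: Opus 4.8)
The plan is to work with the word problem $\mathrm{WP}(\mathbb{Z}, \{a, a^{-1}\})$, which is the language $L = \{w \in \{a, a^{-1}\}^* : |w|_a = |w|_{a^{-1}}\}$, i.e. words with equal numbers of $a$'s and $a^{-1}$'s, and derive a contradiction from the assumption that $L$ is EDT0L. The starting point is to recall that an EDT0L language is produced by a rational control: there is a finite alphabet $\Sigma \supseteq \{a, a^{-1}\}$, a start word, and a regular language $R$ of endomorphisms of $\Sigma^*$ such that $L$ is obtained by applying sequences of endomorphisms drawn from $R$. The standard pumping-type consequence is that for sufficiently long derivations one can decompose the endomorphism sequence as $\alpha \beta^k \gamma$ for all $k \geq 0$, all yielding words in $L$; the key subtlety — and the reason the naive pumping argument for EDT0L fails — is that iterating $\beta$ can cause \emph{several} positions of the word to grow simultaneously and at linked rates, rather than a single factor.

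The core idea I would pursue is to restrict attention to inputs of a very rigid shape, say words of the form $a^{i} a^{-1} a^{-1} \cdots$ built to force the EDT0L system into a narrow regime, and then track how a single iterable endomorphism $\beta$ acts on the ``letter multiset'' — that is, on the $\mathbb{Z}$-value $|w|_a - |w|_{a^{-1}}$, which is an affine-linear functional that the rational control must preserve as it hits zero. First I would set up the normal form for EDT0L derivations (extended alphabet, tables, the free monoid $\mathrm{End}(\Sigma^*)$, rational control), then establish a pumping lemma: long enough derivations factor through a loop, giving a family $w_k = \varphi(\alpha \beta^k \gamma)(c)$ for $k \in \mathbb{N}$, all in $L$. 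Next I would analyse the growth: each letter of $\Sigma$ under iteration of $\beta$ either stabilises, grows polynomially, or grows exponentially in $k$, and the counting constraint $|w_k|_a = |w_k|_{a^{-1}}$ must hold identically in $k$. By choosing the input so that the ``seed'' on which $\beta$ loops is forced to contain genuine $a$-content that grows, I would aim to show the two counts grow at incompatible rates (or with incompatible leading coefficients/periodic corrections), contradicting membership for all $k$. A cleaner route, which I would actually prefer, is to pass to the Parikh-type image but weighted by position: consider instead the family of words $u_n = a^n v a^{-n}$ or $a^n a^{-n} a^n a^{-n} \cdots$ designed so that there are many ``independent'' places a loop could act, and use a counting/pigeonhole argument on the finitely many endomorphisms to show some loop must be ``unbalanced'' on the $\mathbb{Z}$-grading.

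The step I expect to be the main obstacle is precisely controlling the simultaneous multi-site growth under $\beta^k$: unlike context-free pumping where one pumps a bounded number of factors, here a single endomorphism can replicate a nonterminal many times, so the bookkeeping of how $|{\cdot}|_a - |{\cdot}|_{a^{-1}}$ evolves must be done at the level of the \emph{matrix} recording, for each letter of $\Sigma$, how many copies of each letter $\beta$ produces — and then argue about the spectral/combinatorial structure of that matrix (Perron--Frobenius leading term, or the structure of the strongly connected components of the letter-dependency graph) to force an imbalance. I would therefore organise the proof around: (i) the EDT0L normal form and pumping lemma; (ii) reduction to a single loop acting on a fixed letter-vector; (iii) a lemma that the sequence $k \mapsto |w_k|_a - |w_k|_{a^{-1}}$ is, after a bounded transient, given by a finite sum of terms $p_j(k)\lambda_j^k$ with $\lambda_j \in \mathbb{N}$ and $p_j$ polynomial; (iv) showing such a sequence cannot be identically zero unless the loop is trivial on the relevant letters, and then deriving a contradiction by choosing the input long enough that some nontrivial loop with $a$-content is unavoidable. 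The delicate point in (iv) is ruling out cancellation between the growing $a$-count and the growing $a^{-1}$-count coming from the \emph{same} loop — this is where I expect to need a finer invariant than the total count, perhaps tracking $a$ and $a^{-1}$ separately as two coupled nonnegative sequences and using that a nonnegative integer linear recurrence that is eventually zero is eventually trivially so.
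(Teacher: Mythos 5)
Your proposal has a genuine and, I think, fatal gap: the language $L=\{w\in\{a,b\}^*:|w|_a=|w|_b\}$ is closed under permutation of its letters, so it is completely determined by a Parikh/counting condition. Consequently, no contradiction can come from tracking the $\mathbb{Z}$-grading $|w_k|_a-|w_k|_{a^{-1}}$ along a pumped family $w_k=I\cdot(\alpha\beta^k\gamma)$: a candidate grammar is free to use only \emph{balanced} loops (e.g.\ a table sending $X\mapsto aXb$, or any table whose letter-production matrix preserves the grading), and for such loops the constraint $|w_k|_a=|w_k|_b$ holds identically in $k$. Pumping produces more words \emph{inside} $L$, never a word that ought to be outside it, so steps (iii)--(iv) of your plan — showing the counts grow at ``incompatible rates'' — cannot be completed; the cancellation you flag as the ``delicate point'' is not an obstacle to be overcome but the generic situation. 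The finer invariant you gesture at (tracking $a$ and $a^{-1}$ as coupled nonnegative sequences) is still a counting invariant and suffers the same defect. Any successful argument must instead exhibit a specific word of $L$ that the grammar \emph{cannot} generate, and the obstruction must be positional (where the letters sit), not numerical (how many there are).

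That is what the paper does, by a quite different route that your plan also omits at the outset: it first proves that a group word problem which is EDT0L is automatically EDT0L of \emph{finite index} (via a string transducer that inserts trivial words, together with a shuffle-product lemma); this reduction is special to word problems and is what tames the multi-site growth you correctly identify as the main difficulty in general EDT0L systems. Finite-index grammars are then converted to non-branching multiple context-free grammars in a normal form where a derivation assembles the target word from $k$ contiguous components by single-letter insertions and merges, subject to the constraint that every sentential tuple has bounded image in $\mathbb{Z}$. The contradiction comes from a counterexample word $\mathcal{W}$ with $2k$ levels of nested blocks of $a$'s, and a positional invariant (``$n$-decompositions'', tracking which components carry long runs of $a$'s at their ends) that is preserved backwards through every derivation step but cannot hold for the empty initial tuple. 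Without the finite-index reduction and without a positional rather than counting invariant, your outline does not lead to a proof.
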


From the above theorem and \cref{lem:taking-a-submonoid-of-EDT0L}, we then obtain the following theorem.

\begin{restatable}{theoremx}{MainCorollary}\label{thm:cormain}
	If $G$ has an EDT0L word problem, then $G$ is a torsion group.
\end{restatable}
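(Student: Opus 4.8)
The plan is to deduce Theorem~\ref{thm:cormain} from Theorem~\ref{thm:main} and Proposition~\ref{lem:taking-a-submonoid-of-EDT0L} by a contrapositive argument. Suppose $G$ is a finitely generated group with EDT0L word problem that is \emph{not} torsion, so it contains an element $g$ of infinite order. The subgroup $\langle g \rangle$ is then isomorphic to $\mathbb{Z}$, and it is finitely generated (indeed, generated by $g$ together with $g^{-1}$ as a monoid). The strategy is to transfer the EDT0L property from $G$ down to this copy of $\mathbb{Z}$, and then invoke Theorem~\ref{thm:main} for a contradiction.

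Concretely, I would first fix a finite monoid generating set $X$ for $G$ such that $\mathrm{WP}(G,X)$ is EDT0L; this is available by hypothesis, possibly after applying the first part of Proposition~\ref{lem:taking-a-submonoid-of-EDT0L} to pass from whatever generating set witnesses the EDT0L property to one of our choosing (the proposition guarantees the property is generating-set independent). Next, I would take $Y = \{g, g^{-1}\}$, which is a finite monoid generating set for the submonoid $\langle g \rangle \leq G$. The final clause of Proposition~\ref{lem:taking-a-submonoid-of-EDT0L} — the statement that the conclusion also holds when $Y$ generates merely a submonoid of $G$ — then tells us that $\mathrm{WP}(\langle g\rangle, Y)$ is EDT0L (of finite index, though we only need that it is EDT0L). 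But $\langle g \rangle \cong \mathbb{Z}$ and $Y$ corresponds precisely to the standard generating set $\{+1, -1\}$ of $\mathbb{Z}$, so $\mathrm{WP}(\langle g\rangle, Y)$ is (up to relabelling the alphabet, which preserves EDT0L-ness) the word problem for $\mathbb{Z}$. This contradicts Theorem~\ref{thm:main}.

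I do not anticipate a genuine obstacle here: the real content lies in Theorem~\ref{thm:main} and in Proposition~\ref{lem:taking-a-submonoid-of-EDT0L}, both of which we are entitled to assume. The only point requiring a little care is the reduction to a \emph{finitely generated} group. A priori Theorem~\ref{thm:cormain} is stated for any group with EDT0L word problem; but the word problem is only defined relative to a finite generating set, so implicitly $G$ is finitely generated, and "not torsion" immediately yields an infinite-order element. One should also note that the word problem of $\mathbb{Z}$ with respect to any finite monoid generating set is EDT0L if and only if it is for the standard one — again by Proposition~\ref{lem:taking-a-submonoid-of-EDT0L} — so there is no ambiguity in which generating set of $\mathbb{Z}$ Theorem~\ref{thm:main} refers to. Assembling these observations gives the result in a few lines.
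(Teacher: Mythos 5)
Your proposal is correct and is essentially identical to the paper's own argument: take an infinite-order element $g$, apply the submonoid clause of \cref{lem:taking-a-submonoid-of-EDT0L} to $Y=\{g,g^{-1}\}$ to conclude that $\mathbb{Z}$ has EDT0L word problem, and contradict \cref{thm:main}. The extra remarks about generating-set independence are harmless but not needed beyond what the proposition already supplies.
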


Our proof of \cref{thm:main} is an expanded version of the proof given by the fourth 
author~\cite[Chapter~8]{Gallot2021} in their PhD thesis.
It remains to be shown whether there can exist an infinite torsion group with EDT0L word problem.

A language related to the word problem of $\mathbb Z$ with respect to a standard generating set is the \emph{Dyck language} $D_1$, where
\[
    D_n
    =
    \bigl\{
        w \in \{a_1,b_1,\dots, a_n,b_n\}^*
    \bigm|
        |w|_{a_i} = |w|_{b_i}
        \text{ and, for each prefix }u\text{ of }w\text{ and each }i,\ 
        |u|_{a_i} \geq |u|_{b_i}
    \bigr\}
\]
where $|\cdot|_x$  counts the instances of the letter  $x$ in $w$.
By definition, we see that $D_1$ is a proper subset of the word problem of $\mathbb Z$ (taking $a_1$ as the generator $1$ and $b_1$ as $-1$).
It was shown in~\cite[Theorem~9]{Ehrenfeucht1977} that the  language $D_n$ is not EDT0L for $n\geq 8$.
This was later strengthened in~\cite[Theorem~6.3.2]{Brigitte1987} where it was shown that $D_1$ is not EDT0L.
However, this result and the techniques used therein were not helpful in proving \cref{thm:main}.

This paper is organised as follows.
In \cref{sec:background} we fix some terminology about geodesic words.
In \cref{sec:EDT0L-languages} we define the families of EDT0L and finite-index EDT0L languages, and provide some tools for proving that an EDT0L language has finite index.
In \cref{sec:edt0l-closure-props} we prove that the class of EDT0L languages is closed under string transduction.
In \cref{sec:edt0l-wp} we prove that if the word problem of a group is EDT0L, then it is EDT0L of finite index, and obtain \cref{lem:taking-a-submonoid-of-EDT0L}.
In \cref{sec:multiple-context-free} we define a \emph{non-permuting, non-erasing, non-branching multiple context-free grammar} to be a multiple context-free grammar that does not permit permutation of variables, does not allow erasure of variables, and restricts productions to maintain a single nonterminal.
For ease of reference, we refer to such a grammar as a \emph{restricted \MCF\ grammar} (abbreviated \emph{\RMCFG}) throughout the paper.
We prove that every finite-index EDT0L language is the language of an \RMCFG.
Finally, in \cref{sec:main-theorem}, we prove
\cref{thm:main,thm:cormain}.

\subsection{High-level overview of the proof of \protect{Theorem~\ref{thm:main}}}

This subsection furnishes a very brief outline of the main ideas of the proof, to motivate the technology required in the sections leading up to and including \cref{sec:main-theorem}.
We start the proof of \cref{thm:main} by assuming for contradiction that the word problem of $\mathbb Z$ with respect to the generators $1,-1$ is generated by a given \RMCFG, and moreover we assume the grammar is in a particular \emph{normal form} (see~\cref{def:normal-form}). 
From this grammar, we obtain a bound $C$ related to the difference in the number of $1$ versus $-1$ letters in a tuple words that appear in a step of any derivation (see \cref{lem:derivation_bound_basic} for details). 
 Using this bound,  we construct a word $\mathcal W$, which we call the \emph{counterexample word}, having a very specific and controlled form (which uses two parameters $m$, $k$ coming from the grammar and bound $C$). An example of $\mathcal W$ for $k=2$, $m=4$ is shown in \cref{fig:placeholder}, where the generator $1$ is represented by an up-step $(1,1)$, and $-1$ by a down-step $(1,-1)$.
\begin{figure}[!h]
    \centering
    \includegraphics{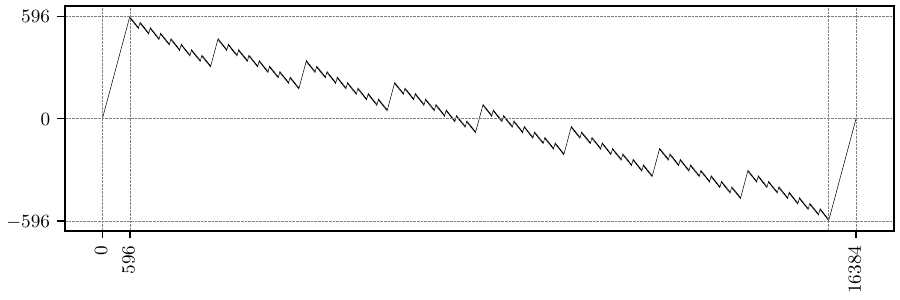}
    \caption{The word $\mathcal W$ defined in \cref{sec:main-theorem} for values  $k=2$ and $m=4$.}
    \label{fig:placeholder}
\end{figure}
The basic idea of the construction of this word is to ensure that it contains ``long'' up-step factors, having lengths exceeding some bound (depending on $C$ and the grammar)  and occurring sufficiently frequently, and
all down-step factors of constant size, 
 such that the total difference between the number of up and down steps is zero (so that $\mathcal W$ is in the word problem of $\mathbb Z$).
We show that any derivation of the R-MCFG in normal form 
generating $\mathcal W$ 
will have the form 
\[S(\mathcal W)\leftarrow H_{\ell}(\mathcal W,\varepsilon,\dots,\varepsilon)\leftarrow\cdots\leftarrow H_{1}(\varepsilon,\dots,\varepsilon)\leftarrow\] (for some nonterminals $H_1,\dots, H_\ell$ and starting nonterminal $S$; see  \cref{def:nonbranching-mcf})
where the words in the tuple $(\varepsilon,\dots,\varepsilon)$  do not have any long up-step 
factors, whereas $\mathcal W$ does (in particular, its prefix and suffix are such factors). \Cref{def:decomposition} defines a property of tuples that is enjoyed by $(\mathcal W,\varepsilon,\dots,\varepsilon)$ and not by 
$(\varepsilon,\dots,\varepsilon)$. Specifically, a tuple is said to \emph{have a decomposition} (as in \cref{def:decomposition}) if 
a particular inequality which compares the lengths and occurrences of long up-step factors at the prefix and suffix of coordinates, as well as several other quantities, is satisfied.
We then prove (\cref{prop:decomp-full}) that for each step of a derivation of the R-MCFG, if the target tuple has a decomposition then so must the source tuple. Since  $(\mathcal W,\varepsilon,\dots,\varepsilon)$ has a decomposition and $(\varepsilon,\dots,\varepsilon)$ does not, we have our contradiction.

\subsection{Notation}

We write $\mathbb N = \{0,1,2,3,\ldots\}$ for the set of nonnegative integers including zero, and 
$\mathbb N_+=\mathbb N\setminus \{0\}$.
Unless otherwise stated, monoid actions are applied on the right: we make this choice to simplify the constructions within the proofs below.
An \emph{alphabet} is a finite set. If $X$ is an alphabet, we let $X^*$ denote the set of words (finite length strings) of letters from $X$, and $X^+=X^*\setminus\{\varepsilon\}$.
We will sometimes speak of the word problem of a monoid.
Suppose that $M$ is a monoid with finite generating set $X$, and that $1_M$ is the monoid identity.
The word problem of $(M,X)$ is given by
\[
  \WP(M,X)
  \coloneqq
  \{
    w\in X^*
  \mid
    \overline{w}=1_M
  \}
\]
where $\overline\cdot \colon X^* \to M$ is the natural homomorphism from the words over $X$ to the monoid.

\section{Geodesics in finitely-generated groups}\label{sec:background}

The results in this paper combine topics in formal language theory and geometric group theory.
This section is provided for readers who are not familiar with some of the terms and basic notations of geometric group theory which are used in this paper.
In particular, this section explains what it means for a word to be \emph{geodesic} in a finitely generated group.
Readers who are familiar with this terminology can safely skip this section.

Suppose that $G$ is a group which is generated as a monoid by a finite subset $X$.
That is, we have a surjective homomorphism, which we denote as $\overline\cdot\colon X^* \to G$, from the free monoid over $X$ to the group $G$.
Thus, for each element $g\in G$, there exists at least one word $w\in X^*$ such that $\overline{w}=g$.
If the word $w\in X^*$ is a minimal-length word for which $\overline{w}=g$, then we say that $w$ is a \emph{geodesic} for $g$.
Notice that each element $g\in G$ must have at least one corresponding geodesic.

For example, suppose that $G$ is the group given by vectors in $\mathbb Z^2$ with group operator being vector addition.
Such a group is generated by $X = \{x,x^{-1}, y, y^{-1}\}$ where $\overline{x}=(1,0)$ and $\overline y=(0,1)$.
The element $(2,3)$ can be written as any of the following words over $X$:
\begin{align*}
  xxyyy &&
  yxyxy &&
  y^{-1}xxyyyy &&
  x^{-1} x x^{-1} yxyxyx.
\end{align*}
Notice here that the word $xxyyy$ and $yxyxy$ are both geodesics.

We have the following properties of geodesics which are used in this paper:
\begin{enumerate}
  \item any prefix or suffix of a geodesic is also a geodesic;
  \item the only geodesic for the group identity is the empty word; and
  \item if $G$ is an infinite group, then it has arbitrarily long geodesics.
\end{enumerate}
The above properties are straightforward to prove, and left as an exercise to the reader.

\section{EDT0L and EDT0L of finite index}\label{sec:EDT0L-languages}

In this section we provide background on the family of EDT0L languages and the subfamily of EDT0L languages of finite index.
Our aim here is to prove \cref{lem:shuffle-product-edt0l} due to Latteux~\cite{Latteux1980}, which provides a method to show that a given language is EDT0L of finite index.
We begin this section by defining the class of EDT0L grammars as follows.
We note that our definition of EDT0L grammar differs from their original definition given in \cite[Definition~1 and~2]{Rozenberg1973et0l}, in particular, we make additional assumptions of our \emph{tables} in (\ref{def:edt0l-grammar/3}) of \cref{def:edt0l-grammar}.
However, it can be seen from \cite[Lemma~2.2]{Ehrenfeucht1974} that our definition produces the same class of languages to that of \cite[Definition~1 and~2]{Rozenberg1973et0l}.

\begin{definition}
\label{def:edt0l-grammar}
	An \emph{EDT0L grammar} is a 4-tuple $E = (\Sigma, V, I, H)$ where
	\begin{enumerate}
		\item $\Sigma$ is an alphabet of \emph{terminal letters};
		\item $V$ is an alphabet of \emph{nonterminal letters} which is disjoint from $\Sigma$;
		\item $I \in V$ is an \emph{initial symbol}; and
		\item\label{def:edt0l-grammar/3}
        $H \subset \mathrm{End}((V\cup \Sigma)^*)$ is a finite set of monoid endomorphisms such that $\sigma\cdot h=\sigma$ for each $h\in H$ and each $\sigma\in \Sigma$.
        The endomorphisms $h\in H$ are called \emph{tables}.
	\end{enumerate}
	The language generated by such a grammar $E$ is given by
	\[
		L(E)
		\coloneq
		\{
    		I\cdot \alpha
		\mid
    		\alpha \in H^*
		\} \cap \Sigma^\ast
	\]
	where $I\cdot\alpha$ denotes the (right) action of $\overline\alpha\in \mathrm{End}((V\cup\Sigma)^*)$ on the word $I$.
	That is, $L(E)$ is the set of words in $\Sigma^*$ that can be obtained by applying any sequence of maps from $H$ to $I$.
\end{definition}

For example, the language 
\[
  L
  =
  \{
    a^{n_1}b a^{n_2} b a^{n_3}b \cdots a^{n_k}b
    \mid
    n_i,k\in \mathbb N
    \text{ with }
    0\leq n_1 \leq n_2 \leq \cdots \leq n_k
  \}
\]
is EDT0L \cite[Proposition 7]{CiobanuElderFerov} as it can be generated using the grammar $E = (\Sigma, V, I, H)$ where
\begin{enumerate}
  \item $\Sigma = \{a,b\}$;
  \item $V = \{I,A\}$; and
  \item $H = \{h_1,h_2,h_3\}$ with
    \begin{align*}
      v \cdot h_1 &=
      \begin{cases}
        I A b & \text{if }v=I,\\
        v & \text{otherwise},
      \end{cases}
      &
      v \cdot h_2 &=
      \begin{cases}
        Aa & \text{if }v=A,\\
        v & \text{otherwise},
      \end{cases}
      &
      v \cdot h_3 &=
      \varepsilon
    \end{align*}
    for each $v\in V$.
\end{enumerate}
In particular, the word $abaaab\in L$ can be generated as $I\cdot (h_1 h_2h_2 h_1 h_2 h_3)$.

Suppose that $E=(\Sigma, V,I,H)$ is an EDT0L grammar.
Then, we call $w \in (\Sigma\cup V)^*$ a \emph{sentential form} if there exists some sequence of tables $\alpha \in H^*$ such that $w = I \cdot \alpha$.
For example, in the grammar given above, the word $AabAaab$ is a sentential form which can be generated as $I\cdot(h_1 h_2h_1)$.
Notice that the initial word $I$ and every word generated by the grammar are examples of sentential forms.

Given a word $w\in X^*$ over a finite alphabet $X$, we write $|w|$ for its length.
For each letter $x\in X$, we write $|w|_x$ to denote the number of instances of the letter $x$ in $w$.
Moreover, for any subset $V\subset X$, we write $|w|_V$ to denote $\sum_{v\in V} |w|_v$.
We now define the EDT0L languages of finite index as follows.

\begin{definition}[See \cite{Rozenberg1978}]\label{def:edt0l-finite-index}
	An EDT0L grammar $E = (\Sigma,V,I, H)$ is of \emph{index $n$} if $|I\cdot \alpha|_{V} \leq n$ for each monoid endomorphism $\alpha \in H^*$.
	That is, if there are at most $n$ nonterminals in any sentential form.
	An EDT0L grammar is said to be of finite index if it is an EDT0L grammar of index $n$ for some $n\in \mathbb{N}$.
	A language is \emph{EDT0L of finite index} if it can be generated by an EDT0L grammar of finite index.
\end{definition}

From the formal grammar description of regular languages, we have the following result.

\begin{lemma}\label{lem:reg_is_edt0l}
	Regular languages are EDT0L of index 1.
\end{lemma}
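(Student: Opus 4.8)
The plan is to start from an arbitrary regular language $L \subseteq \Sigma^*$ and its right-linear (regular) grammar, and turn that grammar directly into an EDT0L grammar in which every sentential form contains at most one nonterminal. Recall that $L$ is generated by a grammar with productions of the form $A \to xB$ and $A \to \varepsilon$, where $A,B$ are nonterminals and $x \in \Sigma$; equivalently, $L$ is recognised by a (possibly nondeterministic) finite automaton. Because right-linear derivations expand exactly one nonterminal at a time and each step introduces at most one new nonterminal, a sentential form in such a grammar is always a word of the shape $w A$ with $w \in \Sigma^*$ and $A$ a single nonterminal (or a terminal word $w$ once the derivation is complete). This is precisely the index-$1$ condition we want, so the content of the lemma is really just translating ``apply one production'' into ``apply one table''.

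Concretely, I would build the EDT0L grammar $E = (\Sigma, V, I, H)$ as follows. Let $V$ be the set of nonterminals of the regular grammar, let $I$ be its start symbol, and for each production $p$ of the grammar define a table $h_p \in \mathrm{End}((V\cup\Sigma)^*)$. For a production $p : A \to xB$, set $v\cdot h_p = xB$ if $v = A$ and $v \cdot h_p = v$otherwise; for a production $p : A \to \varepsilon$, set $v \cdot h_p = \varepsilon$ if $v = A$ and $v\cdot h_p = v$ otherwise. Each $h_p$ fixes every terminal letter, as required by \cref{def:edt0l-grammar}. The key point to check is that applying a sequence of these tables to $I$ simulates leftmost derivations of the regular grammar: if the current sentential form is $wA$ with $w \in \Sigma^*$, then applying $h_p$ for a production $p$ with left-hand side $A$ rewrites it to $wxB$ (or $w$), exactly as the grammar would, while applying a table whose production has a different left-hand side leaves $wA$ unchanged. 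One should note a small subtlety: since the nonterminal may appear only once and all tables fix terminals, there is never any ``accidental'' rewriting elsewhere in the word, so the simulation is faithful in both directions. Hence $L(E) = L$, and since every reachable sentential form has the form $wA$ or $w$ with $w\in\Sigma^*$, we have $|I\cdot\Phi|_V \le 1$ for every $\Phi \in \langle H\rangle$, i.e.\@ $E$ is of index $1$.

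The remaining direction, $L(E) \supseteq L$ and $L(E) \subseteq L$, is a routine induction on the length of the derivation / table sequence, matched step for step. I do not expect any genuine obstacle here; the only mild care needed is the bookkeeping that a single table application which does not touch the current nonterminal is a harmless no-op (so that the EDT0L derivation can always ``wait'' and is never forced into an unintended rewrite), and the observation that once the sentential form is a terminal word every further table fixes it. An alternative, essentially equivalent, route is to phrase the construction in terms of a DFA for $L$ — states as nonterminals, a table per transition, and an $\varepsilon$-table for each accepting state — which makes the index-$1$ invariant ``the sentential form is (scanned prefix)(current state)'' completely transparent; I would mention this as a remark but carry out the grammar-based argument as the main proof.
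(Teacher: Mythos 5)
Your proof is correct and takes essentially the same approach as the paper: the paper gives no detailed argument, simply asserting that the lemma follows ``from the formal grammar description of regular languages,'' which is exactly the right-linear-grammar-to-tables translation you carry out. Your construction (one table per production, with the invariant that every sentential form is $wA$ or $w$ and hence contains at most one nonterminal) is precisely the intended argument, spelled out.
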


In order to prove the results in this section, we define the family of \emph{LULT} grammars as follows.

\begin{definition}\label{def:lult}
	An EDT0L grammar $E = (\Sigma,V,I, H)$ is \emph{LULT} (which stands for \emph{EDT0{\bfseries L}-système {\bfseries ult}ralinéaire}, cf.~\cite[p.\,361]{Latteux1980}) if for each word $w\in L(E)$, there exists some $\alpha\in H^*$ with $w = I\cdot \alpha$ such that for each factorisation $\alpha = \alpha_1 \alpha_2$ with $\alpha_1,\alpha_2\in H^*$, and each $v \in V$, either $|I \cdot \alpha_1|_v \leq 1$ or $|v \cdot \alpha_2|\leq 1$.
\end{definition}

In the next result we show that if a language is generated by a LULT grammar, then the language is EDT0L of finite index. Throughout the proof, we write $f \colon A \rightharpoonup B$ to denote a partial function from $A$ to $B$, and we write $\mathrm{dom}(f)\subseteq A$ for its domain.
That is, $f\colon A \rightharpoonup B$ is a function from some subset of $A$ to a subset of $B$.
Moreover, we write $f = \emptyset$ if $\mathrm{dom}(f) = \emptyset$.

\begin{lemma}[Latteux~\cite{latteux1977}]\label{lem:lult-implies-finite-index}
	If $E = (\Sigma,V,I,H)$ is an EDT0L grammar which is LULT, then the language $L(E)$ is EDT0L of finite index.
\end{lemma}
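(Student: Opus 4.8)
The plan is to show directly that a LULT grammar can be converted into an EDT0L grammar of finite index generating the same language. The key structural fact supplied by the LULT condition is that for every word $w \in L(E)$ there is a derivation $\alpha \in H^*$ producing $w$ along which no nonterminal ever both occurs with multiplicity $\geq 2$ and subsequently contributes more than one symbol. Informally, once a nonterminal "duplicates itself" (so that two copies appear), each of those copies must from then on derive a single terminal letter, hence be essentially frozen; conversely, a nonterminal that will later expand into a long string must stay at multiplicity $\leq 1$ until it does so. So the derivation has a bounded amount of "active growth" happening at any moment, which is exactly what finite index captures.

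First I would fix, for each $w \in L(E)$, a witnessing derivation $\alpha = h_1 h_2 \cdots h_m$ as in \cref{def:lult}, and analyse the sequence of sentential forms $w_i = I \cdot h_1\cdots h_i$. The idea is to track which occurrences of nonterminals in $w_i$ are "productive" — meaning they will eventually contribute at least two symbols to the final word under the remaining tables $h_{i+1}\cdots h_m$ — versus "unproductive" ones that contribute at most one symbol. By the LULT hypothesis applied to the factorisation $\alpha_1 = h_1\cdots h_i$, $\alpha_2 = h_{i+1}\cdots h_m$: for each nonterminal $v$, either $|w_i|_v \leq 1$ or $|v\cdot \alpha_2|\leq 1$. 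Consequently, for each nonterminal letter $v$, at most one occurrence in $w_i$ can be productive (if two occurrences were productive then $|w_i|_v\geq 2$, forcing $|v\cdot\alpha_2|\leq 1$, contradicting productivity). Hence the total number of productive occurrences in any $w_i$ is at most $|V|$.

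Next I would build a new grammar $E' = (\Sigma, V', I', H')$ whose nonterminal alphabet records which occurrences are productive: intuitively a letter of $V'$ is a nonterminal of $V$ tagged as productive, while unproductive nonterminals are handled by "pre-collapsing" them. The subtlety is that an unproductive occurrence of $v$ may still, under $\alpha_2$, derive exactly one terminal letter (or the empty word), and which letter that is depends on the future of the derivation, not just on $v$. To deal with this one passes to a grammar over a refined nonterminal set that, for each nonterminal, also remembers the eventual single terminal output (one of finitely many choices, including $\varepsilon$); when a table of $H$ would replace $v$ by a word containing several nonterminals, at most one of which is productive, the corresponding table of $H'$ replaces the tagged $v$ by that word with the productive occurrence re-tagged and every unproductive occurrence replaced by its committed terminal value. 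Making this precise requires checking that these new tables are well-defined endomorphisms of $(V'\cup\Sigma)^*$ fixing $\Sigma$, that $\langle H'\rangle$ simulates exactly the LULT-witnessing derivations, and hence $L(E') = L(E)$, and finally that $|I'\cdot\Phi'|_{V'}$ is bounded — this bound being exactly the bound of at most $|V|$ productive occurrences established above (times the number of terminal tags, a constant), so $E'$ has finite index.

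The main obstacle is the bookkeeping in the previous paragraph: committing each unproductive occurrence to its future single-letter output is a guess about the remainder of the derivation, and one must verify that the guesses can be threaded consistently through the tables so that the resulting grammar is genuinely EDT0L (deterministic tables) rather than merely ET0L, while still covering every word of $L(E)$. Handling the empty-word case ($|v\cdot\alpha_2| = 0$) and the possibility that the "productive" slot migrates from one nonterminal letter to another between $w_i$ and $w_{i+1}$ both need care. Everything else — the counting argument bounding productive occurrences, closure of $\Sigma$ under the new tables, and the fact that finite index follows from a uniform bound on $|I'\cdot\Phi'|_{V'}$ — is routine once the construction is set up.
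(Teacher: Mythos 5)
Your plan is essentially the paper's proof: the paper tags the (at most $|V|$) productive occurrences as nonterminals $X_{a,A,f}$, records the committed single-symbol futures of the unproductive nonterminals in a partial function $f\colon V\rightharpoonup\Sigma\cup\{\varepsilon\}$, and resolves your determinism worry by introducing one table $t_{h,B,g}$ for every guess $(B,g)$ of the next productive set and commitment function (so each table is a genuine endomorphism and the nondeterminism lives in the choice of table, which EDT0L permits), yielding index $|V|+1$. The construction and the counting argument you outline match the paper's, so the approach is correct.
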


\begin{proof}
	Let $E = (\Sigma,V,I,H)$ be a LULT grammar as in the lemma statement.
	In this proof, we construct an EDT0L grammar $E' = (\Sigma,V',I', H')$ with index $|V|+1$ such that $L(E) = L(E')$.
	In particular, we first construct the grammar $E'$, then prove that it generates precisely the language $L(E)$.

The construction given in this proof is quite involved.
In order to assist the reader's comprehension, we provide an informal overview as follows.
From the definition of LULT grammars, we see that if $w \in \Sigma^*$ is generated by the grammar $E$, then there must exist some sequence of tables $\alpha\in H^*$ with $w = I\cdot \alpha$ such that for each factorisation $\alpha = \alpha_1\alpha_2$, and each nonterminal $v\in V$, we have either $|v\cdot \alpha_1|_v\leq 1$ or $|v\cdot \alpha_2| \leq 1$.
Notice that we do not assume that $\alpha$ is uniquely defined.
The idea of our construction is to produce a grammar $E'$ which only considers such sequences of tables.
In particular, we construct our grammar $E'$ such that each sentential form $u\in (V'\cup \Sigma)^*$ of $E'$ is either some \textit{dead-end} word (see case~\ref{item:case2} in part~\hyperref[lem:lult-implies-finite-index/1.1]{1.1} of this proof) which cannot lead to an accepted word, or corresponds to some word $u' = I\cdot \alpha_1$ of $E$ (as before) where the nonterminals $v$ with $|u'|_v\geq 2$ have been replaced with the words $v\cdot \alpha_2\in \Sigma^*$.
Notice then that the remaining  nonterminals in $u$ each appear at most once, thus resulting in our grammar having finite index as desired.
In order to correctly emulate such productions of the grammar $E$, for technical reasons, each of the nonterminals of $E'$ must store information about which nonterminals are present in the sentential form: to do so, we introduce the following nonterminals.
Their properties are explained in more detail at the end of part~\hyperref[lem:lult-implies-finite-index/1.1]{1.1} of this proof.

	\medskip

	\noindent\underline{1.~Nonterminals\phantomsection\label{lem:lult-implies-finite-index/1}}:\nopagebreak

	\smallskip\nopagebreak\noindent
	We begin our construction by defining our set of nonterminals $V'$ as
	\begin{multline*}
		V'
		=
		\{
		\mathfrak d, I'
		\}
		\cup
		\{
		X_{a,A,f}
		\mid
		a\in A \subseteq V
		\text{ and }
		f\colon V \rightharpoonup (\Sigma\cup \{\varepsilon\})
		\text{ with }
		\mathrm{dom}(f)\cap A = \emptyset
		\}\\
		\cup
		\{
		Y_{A,f}
		\mid
		A \subseteq V
		\text{ and }
		f\colon V\rightharpoonup (\Sigma\cup\{\varepsilon\})
		\text{ with }
		\mathrm{dom}(f)\cap A = \emptyset
		\}.
	\end{multline*}
	We immediately notice that this set of nonterminals is finite, in particular, we have
	\[
		|V'|
		\leq
		2
		+ \left(|V|\cdot 2^{|V|}\cdot (|\Sigma|+2)^{|V|}\right)
		+ \left(2^{|V|}\cdot (|\Sigma|+2)^{|V|}\right).
	\]
	We note here that the symbol $I'$ is the starting symbol of the new grammar, and that $\mathfrak d$ is an additional nonterminal which we call the \emph{dead-end symbol}.
	In our construction, we ensure that each table maps $\mathfrak d$ to itself.
	Thus, once $\mathfrak d$ enters a sentential form, there is no way to continue to generate a word in the language associated to the grammar.
	We describe the remaining symbols of $V'$ in detail in part~\hyperref[lem:lult-implies-finite-index/1.1]{1.1}.

Informally, the nonterminals $X_{a,A,f}$ correspond to instances of the nonterminal $a\in A$ within sentential forms $w$ of $E$, where each nonterminal $v$ that appear in $w$ either has the property that (1) it occurs at most once in $w$ and $v\in A$, or (2) it goes on to produce the word $f(v)\in \Sigma^*$.
Similarly, the nonterminals $Y_{A,f}$ are included to describe such a sentential form: these nonterminals are required for the case where we are representing a sentential form $w$ of $E$ where each nonterminal in $w$ occurs more than once.

	\medskip

	\noindent\underline{1.1.~Properties of sentential forms of $E'$\phantomsection\label{lem:lult-implies-finite-index/1.1}}:\nopagebreak

	\smallskip\nopagebreak\noindent
	In our construction we ensure that, for each sequence of tables of the form $\alpha \in (H')^*$, the resulting sentential form $w = I' \cdot \alpha \in (\Sigma\cup V')^*$ is in one of the following four forms:
	\begin{enumerate}
		\item\label{item:case1} \textit{initial:}
        $w = I'$, that is, $w$ contains only the starting symbol;
		\item\label{item:case2} \textit{dead-end:}
        $\mathfrak d$ is the only nonterminal in $w$ and $|w|_\mathfrak{d}=1$;
		\item\label{item:case3} \textit{production:}
        there exists some $A\subseteq V$ and $f\colon V \rightharpoonup (\Sigma\cup\{\varepsilon\})$ with $\mathrm{dom}(f)\cap A = \emptyset$ such that
		      \[
			      \{
			      X_{a,A,f}\mid a\in A
			      \}
			      \cup
			      \{
			      Y_{A,f}
			      \}
		      \]
		      are the only nonterminals which appear in $w$, each such nonterminal appears exactly once in $w$, and the nonterminal $Y_{A,f}$ appears as the last letter of $w$; or
        \item\label{item:case4} \textit{final:}
            $w \in \Sigma^*$, that is, the word contains no nonterminals.
	\end{enumerate}
	Notice that, with such a property, the grammar would be EDT0L of index $|V|+1$.

    Notice that if a word $w$ is a production as in case~\ref{item:case3}, as described above, then $w = w' Y_{\emptyset,\emptyset}$ would imply that $w' \in \Sigma^*$.
    This is clear as, from the definition of the normal form, $Y_{\emptyset, \emptyset}$ would be the only nonterminal which may appear in such a normal form, i.e., there are no nonterminals of the form $X_{a,\emptyset,f}$.

	Suppose that $\beta_1,\beta_2 \in (H')^*$ with $I' \cdot (\beta_1 \beta_2) \in \Sigma^*$, and that $w = I'\cdot \beta_1 \in (\Sigma\cup V)^*$ is a word as in case~\ref{item:case3}, as above, with $A\subseteq V$ and $f\colon V \rightharpoonup (\Sigma\cup\{\varepsilon\})$.
	In our construction, these sequences $\beta_1,\beta_2 \in (H')^*$ correspond to some sequences $\alpha_1, \alpha_2 \in H^*$ in the grammar $E$ for which
	\begin{itemize}
		\item $I\cdot\alpha_1 \in (\Sigma\cup\mathrm{dom}(f) \cup A)^*$;
		\item for each $a \in A$, we have $|I \cdot \alpha_1|_a \leq 1$; and
		\item for each $b \in \mathrm{dom}(f)$, we have $|b\cdot \alpha_2|\leq 1$, in particular, we have $b\cdot \alpha_2 = f(b)$.
	\end{itemize}
	Compare the above with the definition of LULT grammars as in \cref{def:lult}.
	In the remainder of this proof, we construct the tables $H'$ of $E'$ such that the above properties hold and $L(E)=L(E')$.

	\medskip

	\noindent\underline{2.~Table $t_\mathrm{init}$}:\nopagebreak

	\smallskip\noindent\noindent
	We begin the productions of our grammar $E'$ with a table $t_\mathrm{init}\in H'$ defined as
	\[
		v\cdot t_\mathrm{init}
		=
		\begin{cases}
			X_{I, \{I\},\emptyset}\, Y_{\{I\}, \emptyset}
			  & \text{if }v=I',   \\
			v & \text{otherwise}.
		\end{cases}
	\]
	Notice that applying this table preserves the property of a word belonging to one of the four forms described in part~\hyperref[lem:lult-implies-finite-index/1.1]{1.1} of this proof.
	In the remainder of this proof, we notice that in each production of $E'$, we may assume that $t_\mathrm{init}$ is the first table we apply, and that it is applied exactly once.

	\medskip

	\noindent\underline{3.~Table $t_\mathrm{end}$}:\nopagebreak

	\smallskip\nopagebreak\noindent
	We end the production of our grammar with the table $t_\mathrm{end} \in H'$ defined as
	\[
		v\cdot t_\mathrm{end}
		=
		\begin{cases}
			v           & \text{if }v\in \Sigma,                                                           \\
			\mathfrak d & \text{if }v=Y_{A,f}\text{ where either }A\neq\emptyset\text{ or }f\neq\emptyset, \\
			\mathfrak d & \text{if }v\in \{I',\mathfrak d\},                                               \\
			\varepsilon & \text{otherwise}.
		\end{cases}
	\]
	Notice that applying this table preserves the property of a word being in one of the four forms described in part~\hyperref[lem:lult-implies-finite-index/1.1]{1.1} of this proof.
	Moreover, we notice that given a word $w\in (\Sigma\cup V')^*$ in one of the forms described in part~\hyperref[lem:lult-implies-finite-index/1.1]{1.1} of this proof, that $w\cdot t_\mathrm{end} \in \Sigma^*$ if and only if either $w\in \Sigma^*$ or $w=w'Y_{\emptyset,\emptyset}$ in which case we would have $w'\in \Sigma^*$ (see part~\hyperref[lem:lult-implies-finite-index/1.1]{1.1} of this proof).

	\medskip

	\noindent\underline{4.~Tables $t_{h,B,g}$}:\nopagebreak

	\smallskip\nopagebreak\noindent
	Let $h\in H$ be a table from the grammar $E$, let $B\subseteq V$ be a subset of nonterminals of $E$, and let $g\colon V\rightharpoonup (\Sigma\cup\{\varepsilon\})$ be a partial function with $\mathrm{dom}(g)\cap B = \emptyset$.
	We then introduce a new table $t_{h,B,g}\in H'$.
    Such a table corresponds to an application of the table $h$ of $E$, and is intended to produce words as in case~\ref{item:case3} of part~\hyperref[lem:lult-implies-finite-index/1.1]{1.1} which end in the nonterminal $Y_{B,g}$ (with some edge cases for invalid productions).

    We begin our construction of this table by specifying that 
	\begin{itemize}
		\item $\sigma\cdot t_{h,B,g} = \sigma$ for each $\sigma\in \Sigma$;
		\item $\mathfrak d \cdot t_{h,B,g} = \mathfrak d$; and
		\item $I'\cdot t_{h,B,g} = \mathfrak d$.
	\end{itemize}
    Thus, we see that if $w\in (\Sigma\cup V')^*$ is a word as in case~\ref{item:case1}, \ref{item:case2} or~\ref{item:case4} as in part~\hyperref[lem:lult-implies-finite-index/1.1]{1.1} of this proof, then $w\cdot t_{h,B,g}$ is also a word of such a form.
    Thus, it only remains to specify the table such that we understand how it acts on word in the form described in case~\ref{item:case3} of part~\hyperref[lem:lult-implies-finite-index/1.1]{1.1}.

    Let $A\subseteq V$ and $f\colon V \rightharpoonup (\Sigma\cup\{\varepsilon\})$ with $\mathrm{dom}(f)\cap A = \emptyset$; and
    let $w$ be a word as in case~\ref{item:case3} of part~\hyperref[lem:lult-implies-finite-index/1.1]{1.1} which ends in the nonterminal $Y_{A,f}$.
    In the remainder of this part, we describe the word $w \cdot t_{h,B,g}$, in particular, we specify the action of the table on the nonterminals of the form $X_{a,A,f}$ and $Y_{A,f}$.

    \medskip
    
    \noindent
    \textit{\underline{4.1.~Notation}.\@}
    For each $C\subseteq V$ and each partial function $k\colon V \rightharpoonup (\Sigma\cup\{\varepsilon\})$ with $\mathrm{dom}(k)\cap C = \emptyset$, we define a monoid homomorphism $\mathrm{Label}_{C,k}\colon (\Sigma\cup C\cup \mathrm{dom}(k))^* \to (\Sigma\cup V')^*$ such that
    \[
        \mathrm{Label}_{C,k}(v)
        \coloneqq
        \begin{cases}
            v & \text{if }v\in \Sigma\\
            X_{v,C,k} &\text{if }v\in A\\
            k(v) &\text{if }v\in \mathrm{dom}(k)
        \end{cases}
    \]
    for each $v\in \Sigma\cup C\cup \mathrm{dom}(k)$.
    From our choice of $C$ and $k$, we see that $\mathrm{Label}_{C,k}$ is well-defined.

    Let $k\colon V\rightharpoonup (\Sigma\cup\{\varepsilon\})$ be a partial function as above, then we write $\widetilde k \in \mathrm{End}(\Sigma\cup V)^*$ for the table defined such that
    \[
        v\cdot \widetilde k
        \coloneqq
        \begin{cases}
            k(v) &\text{if }v\in \mathrm{dom}(k)\\
            v &\text{otherwise}
        \end{cases}
    \]
    for each $v\in (\Sigma\cup V)$.

    \medskip
    
    \noindent
    \textit{\underline{4.2.~Intention}.\@}
    Our intention is for our construction to have the following property.
    If there exists some $\beta_1,\beta_2\in(H')^*$ such that $w = I' \cdot \beta_1$ and $I'\cdot(\beta_1\, t_{h,B,g}\, \beta_2) \in \Sigma^*$, then there exists some $\alpha_1,\alpha_2\in H^*$ where
    \begin{enumerate}[label=(I\arabic*),ref=\arabic*]
        \item\label{item:nextprop}
        $I'\cdot(\beta_1\,t_{h,B,g}\cdot \beta_2)= I\cdot (\alpha_1\,h\,\alpha_2)$;
        \item\label{item:nextprop2}
        $I' \cdot \beta_1 = \mathrm{Label}_{A,f}(I\cdot \alpha_1)\,Y_{A,f}$ is a word as in case~\ref{item:case3} of part~\hyperref[lem:lult-implies-finite-index/1.1]{1.1}; and
        \item\label{item:nextprop3}
        $I' \cdot (\beta_1 \, t_{h,B,g}) = \mathrm{Label}_{B,g}(I\cdot \alpha_1 h)\,Y_{B,g}$ is a word as in case~\ref{item:case3} of part~\hyperref[lem:lult-implies-finite-index/1.1]{1.1}.
    \end{enumerate}
    Thus, the table $t_{t,B,g}$ of $E'$ behaves similarly to the table $h$ of $E$ except it performs additional replacements, i.e.~the replacements given by $f$ and $g$ which are completely specified by the nonterminals used in the sentential form.
    Thus, our reason for including the nonterminals $Y_{A,f}$ and $Y_{B,g}$ in these words is so that this information is always present in our sentential form.

    \medskip
    
    \noindent
    \textit{\underline{4.3.~Technical details}.\@}
	We label the elements of $A$ as $A = \{x_1,x_2,\ldots,x_k\}\subseteq V$.
	We then define two sets $B^{(1)}, B^{(\geq2)}\subseteq V$ as
	\[
		B^{(1)}
		  \coloneqq
		\bigl\{ v\in V \bigm|
    	   | (x_1 x_2\cdots x_k) \cdot h |_v = 1
        \bigr\}
        \qquad\text{and}\qquad
		B^{(\geq2)}
		  \coloneqq
		\bigl\{ v\in V \bigm|
		| (x_1 x_2\cdots x_k) \cdot h |_v \geq 2
		\bigr\}
	\]
	where $h \in H$ is the table as in $t_{h,B,g}$.
	We are then interested in the case where the sets $B^{(1)}$ and $B^{(\geq2)}$ satisfy the following 4 additional properties:
	\begin{enumerate}[label=(T\arabic*),ref=\arabic*]
		\item\label{lem:lult-implies-finite-index/prop1} $B^{(\geq2)} \subseteq \mathrm{dom}(g)$;
		\item\label{lem:lult-implies-finite-index/prop2} $B = B^{(1)} \setminus \mathrm{dom}(g)$;
		\item\label{lem:lult-implies-finite-index/prop3} 
        for each $a\in A$, we have
        $a\cdot h \in (\Sigma\cup \mathrm{dom}(g)\cup B)^*$; and
		\item\label{lem:lult-implies-finite-index/prop4} 
        for each $v\in \mathrm{dom}(f)$, we have
        $v\cdot h \in (\Sigma\cup \mathrm{dom}(g))^*$ with $f(v) = \tilde g( v\cdot h )$.
	\end{enumerate}
	If any of these above properties (T\ref{lem:lult-implies-finite-index/prop1}\,--\,\ref{lem:lult-implies-finite-index/prop4}) are violated, then we define
	\begin{align*}
		X_{a,A,f}\cdot t_{h,B,g} & = \varepsilon
		                         &
		\text{and}
		                         &               &
		Y_{A,f} \cdot t_{h,B,g}  & = \mathfrak d
	\end{align*}
	for each $a\in A$, which produces a dead-end word as in case~\ref{item:case2} of part~\hyperref[lem:lult-implies-finite-index/1.1]{1.1}.
    Otherwise, if properties (T\ref{lem:lult-implies-finite-index/prop1}\,--\,\ref{lem:lult-implies-finite-index/prop4}) are satisfied, then we define
    \begin{align}\label{lem:lult-implies-finite-index/construct}
        X_{a,A,f}\cdot t_{h,B,g}
        &=\mathrm{Label}_{B,g}(a\cdot h)
        &\text{and}&&
        Y_{A,f} \cdot t_{h,B,g} &= Y_{B,g} 
    \end{align}
    for each $a\in A$.
    From the definition of $B^{(1)}$, $B^{(\geq 2)}$ and properties (T\ref{lem:lult-implies-finite-index/prop1}\,--\,\ref{lem:lult-implies-finite-index/prop4}), we see that the applications of $\mathrm{Label}_{B,g}$, as above in (\ref{lem:lult-implies-finite-index/construct}), are well-defined.

    Notice that we may then completely describe the action of the tables $t_{h,B,g}$ by repeating the above construction for any valid values of $A$ and $f$.

    \medskip
    
    \noindent
    \textit{\underline{4.4.~Property}.\@}
    Suppose that $w\in (\Sigma\cup V')^*$ is a word in the form described in case~\ref{item:case3} of part~\hyperref[lem:lult-implies-finite-index/1.1] {1.1}.
    Then, from the construction of $B^{(1)}$ and properties T\ref{lem:lult-implies-finite-index/prop1} and T\ref{lem:lult-implies-finite-index/prop2}, as above, we see that $w\cdot t_{h,B,g}$ is either a dead-end word as in case~\ref{item:case2} of part~\hyperref[lem:lult-implies-finite-index/1.1] {1.1}, or a production as in case~\ref{item:case3} of part~\hyperref[lem:lult-implies-finite-index/1.1] {1.1}.
    Further, suppose additionally that there exists some word $u \in (\Sigma\cup V)^*$ such that $w = \mathrm{Label}_{A,f}(u)\, Y_{A,f}$.
    Then either $w\cdot t_{h,B,g}$ is a dead-end word; otherwise it follows from property T\ref{lem:lult-implies-finite-index/prop4} that
    $
        w \cdot t_{h,B,g}
        =
        \mathrm{Label}_{B,g}(u\cdot h)
        \, Y_{B,g}
    $.
    
	\medskip

	\noindent\underline{5.~Soundness and Completeness}:\nopagebreak

	\smallskip\nopagebreak\noindent
	Suppose that $w\in L(E)$, then from the definition of LULT grammars, we see that there must exist some sequence of tables $\alpha = h_1 h_2 \cdots h_k \in H^*$ such that for each factorisation $\alpha=\alpha_1\alpha_2$ with $\alpha_1,\alpha_2\in H^*$, and each nonterminal $v\in V$, either $|I\cdot \alpha_1|_v \leq 1$ or $|v\cdot \alpha_2|\leq 1$.
	For each $i\in \{1,2,\ldots, k-1\}$, let
	\[
		C_i
		=
		\bigl\{
		v\in V
		\bigm|
		|I\cdot (h_1 h_2\cdots h_k)|_v\geq 1
		\bigr\}.
	\]
	We then see that
	\[
		I\cdot (h_1 h_2\cdots h_i) \in (\Sigma \cup C_i)^*
	\]
	for each $i\in \{1,2,\ldots, k-1\}$.
	From our choice of sequence $\alpha=h_1 h_2\cdots h_k$, we see that we can partition each set $C_i$ into two disjoint subsets
	\begin{align*}
		B_i       & = \bigl\{ v\in V \bigm| |I\cdot (h_1 h_2 \cdots h_{i})|_v = 1 \text{ and } |v\cdot (h_{i+1}h_{i+1}\cdots h_k)| \geq 2 \bigr\}\subseteq C_i\text{ and} \\
		C_i^{(*)} & = \bigl\{ v\in V \bigm| |I\cdot (h_1 h_2 \cdots h_{i})|_v \geq 1 \text{ and } |v\cdot(h_{i+1}h_{i+2}\cdots h_k)| \leq 1 \bigr\}\subseteq C_i.
	\end{align*}
	In particular, the two sets correspond to the two cases in the definition of LULT grammars.

	Notice then that, for each $i\in \{1,2,\ldots,k-1\}$, we can define a partial function $g_i\colon V \rightharpoonup (\Sigma\cup\{\varepsilon\})$ with $\mathrm{dom}(g_i) = C_i^{(*)}$ such that $g_i(v) = v\cdot(h_{i+1} h_{i+2}\cdots h_k)$ for each $v\in C_i^{(*)}$.

	We then see that the word $w$ is generated by the grammar $E'$ as
	\[
		w = I' \cdot
		\left(
		t_\mathrm{init}\
		t_{h_1, B_1, g_1}\
		t_{h_2, B_2, g_2}\
		\cdots
		t_{h_{k-1}, B_{k-1}, g_{k-1}}\
		t_{h_k, \emptyset, \emptyset}\
		t_\mathrm{end}
		\right).
	\]
	Thus, we see that $L(E)\subseteq L(E')$.

	Suppose then that $w\in L(E')$ where $w = I' \cdot (\tau_1 \tau_2 \cdots \tau_k)$ with each $\tau_i \in H'$.
	Then, from the definitions of the tables in $H'$, we may assume without loss of generality that
	\begin{itemize}
		\item $k\geq 3$ with $\tau_1 = t_\mathrm{init}$ and $\tau_k = t_\mathrm{end}$; and
		\item for each $i \in \{2,3,\ldots,k-1\}$, the table $t_i$ is of the form $\tau_i = t_{h_i, B_i, g_i}$ where each $h_i\in H$.
	\end{itemize}
	From our construction, we then see that
	$
		w = I\cdot ( h_2 h_3 \cdots h_{k-1} ).
	$
	Thus, we have $L(E')\subseteq L(E)$.

	We thus conclude that $L(E) = L(E')$ where $E'$ is an EDT0L grammar of index $|V|+1$.
\end{proof}

Using \cref{lem:lult-implies-finite-index}, we have the following result.

\begin{lemma}[Proposition 25 in \cite{Latteux1980}]\label{lem:shuffle-product-edt0l}
	Let $L \subseteq \Sigma^*$ and $c\notin \Sigma$.
	If the language
	\[\delimiterfactor=1000
		L\uparrow c^*
		\coloneqq
		\left\{
		c^{n_0} w_1 c^{n_1} w_2 c^{n_2} \cdots w_k c^{n_k} \in (\Sigma\cup \{c\})^*
		\ \middle|\
		\begin{aligned}
			w = w_1 w_2 \cdots w_k \in L\text{ and} \\
			n_0,n_1,n_2,\ldots,n_k\in \mathbb{N}
		\end{aligned}
		\right\}
	\]
	is EDT0L, then $L$ is EDT0L of finite index.
\end{lemma}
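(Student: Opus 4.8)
The plan is to convert an EDT0L grammar for $L\uparrow c^*$ into a \emph{LULT} grammar (in the sense of \cref{def:lult}) for $L$, and then apply \cref{lem:lult-implies-finite-index}. So let $E=(\Sigma\cup\{c\},V,I,H)$ be an EDT0L grammar with $L(E)=L\uparrow c^*$. I would build a grammar $E'=(\Sigma,V',I,H')$ as follows: introduce two fresh symbols $C$ and $\mathfrak d$, put $V'=V\cup\{C,\mathfrak d\}$; for each $h\in H$ let $h'$ be the endomorphism of $(V'\cup\Sigma)^*$ that fixes $\Sigma$, $\mathfrak d$ and $C$ and sends each $v\in V$ to the word obtained from $v\cdot h$ by replacing every occurrence of the letter $c$ with $C$; and add one table $h_{\mathrm{fin}}$ that fixes $\Sigma$ and $\mathfrak d$, sends $C\mapsto\varepsilon$, and sends every $v\in V$ to $\mathfrak d$, so that $H'=\{h'\mid h\in H\}\cup\{h_{\mathrm{fin}}\}$. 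Since every table fixes $\mathfrak d$, a derivation can only reach $\Sigma^*$ if $h_{\mathrm{fin}}$ is never applied while some $v\in V$ is present; hence the words of $L(E')$ are exactly those obtained by running a renamed derivation of $E$ down to a word over $\Sigma\cup\{C\}$ — a renaming of some word of $L\uparrow c^*$ — and then deleting the $C$'s with $h_{\mathrm{fin}}$. As erasing all $c$'s from $L\uparrow c^*$ yields exactly $L$, this gives $L(E')=L$, and it remains to prove that $E'$ is LULT.

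The key combinatorial point is the following. Fix $w=\sigma_1\sigma_2\cdots\sigma_n\in L$ with each $\sigma_i\in\Sigma$, and choose pairwise distinct positive integers $a_1,\dots,a_{n+1}$ (for instance $a_i=i$); then
\[
  \hat w\;=\;c^{a_1}\sigma_1\,c^{a_2}\sigma_2\cdots\sigma_n\,c^{a_{n+1}}\ \in\ L\uparrow c^*.
\]
I claim that any factor $Y$ of $\hat w$ with $|Y|_\Sigma\geq 2$ occurs exactly once in $\hat w$. Indeed, such a $Y$ then contains two consecutive letters $\sigma_i,\sigma_{i+1}$ of $\hat w$, hence also the whole $c$-block $c^{a_{i+1}}$ between them, which is a maximal $c$-run of $\hat w$ bordered inside $Y$ by letters of $\Sigma$; at any occurrence of $Y$ in $\hat w$ this block must coincide with an internal maximal $c$-run of $\hat w$ (one flanked by $\Sigma$-letters on both sides), and since the $a_j$ are pairwise distinct its length determines which such run it is, and hence the position of $Y$. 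This is precisely the feature of $\hat w$ that will realise the dichotomy of \cref{def:lult}.

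To check that $E'$ is LULT, fix $w$ as above together with $\hat w$ and a derivation $I\cdot(h_1h_2\cdots h_k)=\hat w$ in $E$; then $w=I\cdot\alpha$ in $E'$ for $\alpha=h_1'h_2'\cdots h_k'\,h_{\mathrm{fin}}$. Consider a factorisation $\alpha=\alpha_1\alpha_2$. If $\alpha_2=\varepsilon$ then $I\cdot\alpha_1=w\in\Sigma^*$ contains no nonterminal. Otherwise $\alpha_1=h_1'\cdots h_j'$ for some $0\le j\le k$ and $\alpha_2$ ends in $h_{\mathrm{fin}}$; here $I\cdot\alpha_1$ is the renaming of the sentential form $u_j=I\cdot(h_1\cdots h_j)$ (with $u_k=\hat w$), whose nonterminals are $C$ together with the $v\in V$ occurring in $u_j$, while $\mathfrak d$ is absent. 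Since $\alpha_2$ ends in $h_{\mathrm{fin}}$ we get $C\cdot\alpha_2=\varepsilon$, so $|C\cdot\alpha_2|\le1$, and $|I\cdot\alpha_1|_{\mathfrak d}=0$. Finally let $v\in V$ with $|u_j|_v\ge 2$; since $v$ occurs in $u_j$ and $u_k=\hat w$ is a terminal word over $\Sigma\cup\{c\}$, the word $Y:=v\cdot(h_{j+1}\cdots h_k)$ lies in $(\Sigma\cup\{c\})^*$ and occurs at least $|u_j|_v\ge 2$ times in $\hat w$, so $|Y|_\Sigma\le 1$ by the claim. Then $Y$ is $c^p$ or $c^p\sigma c^q$, and $v\cdot\alpha_2$ — obtained by renaming $Y$ and applying $h_{\mathrm{fin}}$ — is $\varepsilon$ or a single letter, so $|v\cdot\alpha_2|\le1$. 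Hence $E'$ is LULT, and \cref{lem:lult-implies-finite-index} shows that $L=L(E')$ is EDT0L of finite index. The degenerate case $w=\varepsilon$ is immediate, as then $|Y|_\Sigma=0$ for every relevant $Y$.

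The step I expect to be the crux is the combinatorial claim: padding the letters of $w$ with $c$-runs of pairwise distinct lengths forces every factor containing two letters of $\Sigma$ to occur uniquely, which is exactly what turns ``a nonterminal appears at least twice in a sentential form'' into ``that nonterminal yields at most one letter of $\Sigma$'' — the alternative required by \cref{def:lult}. Granting that, the construction of $E'$ and the verification above are routine bookkeeping.
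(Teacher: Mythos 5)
Your proof is correct and follows essentially the same route as the paper: both reduce to \cref{lem:lult-implies-finite-index} by padding a word of $L$ with $c$-runs of pairwise distinct lengths, so that any factor occurring twice in the padded word carries at most one letter of $\Sigma$, which yields the LULT dichotomy. The only (immaterial) difference is in how $c$ is removed — the paper composes each table with the $c$-erasing homomorphism on the fly, whereas you rename $c$ to a nonterminal $C$ and erase it with a final table.
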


\begin{proof}
	Suppose that $E = (\Sigma\cup \{c\},V, I, H)$ is an EDT0L grammar for the language $L\uparrow c^*$.
	We then introduce a monoid homomorphism $h\colon (\Sigma\cup \{c\}\cup V)^*\to (\Sigma\cup V)^*$ such that
	\[
		h(c) = \varepsilon
		\quad
		\text{and}
		\quad
		h(v)=v
		\text{ for each }
		v\neq c.
	\]
	From this, we define an EDT0L grammar $E' = (\Sigma, V, I, H')$ where
	$
		H' = \left\{
		t h
		\mid
		t\in H
		\right\}.
	$
	Notice that since $c$ is a terminal letter of the grammar $E$, deleting the letter $c$ after every application of a table is equivalent to removing all instances of $c$ at the end of a production.
	Thus, we have $L(E') = L(E)\cdot h = L$.
	It only remains to be shown that $E'$ is a EDT0L language of finite index.
	We demonstrate this by proving that $E'$ is a LULT grammar, after which we apply \cref{lem:lult-implies-finite-index} to obtain our result.

	To simplify this proof, we define a (length-preserving) monoid homomorphism $\varphi\colon H^* \to (H')^* $ which we define such that $\varphi(t) = th\in H'$ for each $t \in H$.
	Notice also that $H' = \varphi(H)$.
	Moreover, since $c$ is a terminal letter, we see that $w \cdot \varphi(\alpha) = w\cdot (\alpha h)$ for each $\alpha \in H^*$ and each $w\in (\Sigma\cup\{c\}\cup V)^*$, that is, removing the letter $c$ after every application of a table is equivalent to removing every $c$ only at the end.

	Suppose we are given a word $w' = w_1 w_2 \cdots w_k\in L(E')$, then it follows that
	\[
		w = w_1 c w_2 c^2 w_3 c^3 \cdots w_k c^k \in L\uparrow c^*.
	\]
	We then see that there exists some $\alpha=h_1 h_2\cdots h_k\in H^*$ with $w = I \cdot\alpha$ and thus $w' = I\cdot(\alpha h) = I\cdot \varphi(\alpha)$.
	We now show that the sequence $\alpha' = h_1' h_2'\cdots h_k' \in H'$, where each $h'_i=\varphi(h_i)$, is a choice of a sequence of tables which generates $w'$ in $E'$ and satisfies the constraints of a LULT grammar (see \cref{def:lult}).

	Suppose for contradiction that there is a factorisation $\alpha' = \alpha_1'\alpha_2'$ and a nonterminal $v\in V$ such that both $|I\cdot \alpha_1'|_v\geq 2$ and $|v\cdot \alpha_2'|\geq 2$.
	Let $\alpha = \alpha_1 \alpha_2$ be the unique factorisation of $\alpha \in H^*$ with $\alpha_1' = \varphi(\alpha_1)$ and $\alpha'_2 = \varphi(\alpha_2)$.
	We then have the follow two observations:
	\begin{itemize}
		\item Since $|I\cdot \alpha_1'|_v\geq 2$, from the definition of $\varphi$, we see that $|I\cdot \alpha|_v\geq 2$, that is, $I\cdot \alpha_1$ contains at least 2 distinct instances of the variable $v$.
		\item Since $|v\cdot \alpha_2'|\geq 2$, from the definition of the word $w$ and the map $\varphi$, we see that $v\cdot \alpha_2$ must contain a factor of the form $\sigma_1 c^m \sigma_2$ where $\sigma_1,\sigma_2\in \Sigma$ and $m\in \mathbb N_+$.
	\end{itemize}
	From these observations, we see that the word $w = I\cdot (\alpha_1\alpha_2)$ must contain two distinct factors of the form $\sigma_1 c^m \sigma_2$.
	This contradicts our choice of word $w$.
	Hence, we conclude that either $|I\cdot \alpha'_1|_v \leq 1$ or $|v\cdot \alpha_2'|\leq 1$ holds.
	From this, we then see that $E'$ is a LULT grammar.

	From \cref{lem:lult-implies-finite-index}, we conclude that the language $L$ is EDT0L of finite index.
\end{proof}

\section{EDT0L is closed under application of string transducers}\label{sec:edt0l-closure-props}

In this section, we furnish a proof that the family of EDT0L languages is closed under mapping by a \emph{string transducer}, also known as a \emph{deterministic finite-state transducer}, or a \emph{deterministic generalised sequential machine (deterministic gsm)}.
We begin with the following
definition.

\begin{definition}\label{def:det-fst}
	A (deterministic) \emph{string transducer} is a tuple
	$M = (\Gamma, \Sigma, Q, A, q_0, \delta)$ where
	\begin{itemize}
    \item $\Gamma$ and $\Sigma$ are the \emph{input} and \emph{output alphabets}, respectively;
    \item $Q$ is a finite set of \emph{states};
    \item $A\subseteq Q$ is a finite set of \emph{accepting states};
    \item $q_0 \in Q$ is the \emph{initial state}; and
    \item $\delta\colon \Gamma \times Q \to \Sigma^*\times Q$ is a \emph{transition function}.
	\end{itemize}
	Given a language $L\subseteq \Gamma^*$, we may then define the language $M(L)\subseteq \Sigma^*$ as
	\[
		M(L)
		=
		\left\{
		u_1 u_2 \cdots u_k\in \Sigma^*
		\ \middle|\
		\begin{aligned}
			\text{there exists some word }
			w = w_1 w_2 \cdots w_k \in L\subseteq \Gamma^*
			\\\text{ such that }
			\delta(w_i, q_{i-1}) = (u_i, q_i)
			\text{ for each }i \in \{1,2,\ldots,k\} \\
			\text{ where }
			q_0\text{ is the initial state, and }
			q_1,q_2,\ldots,q_k\in Q
			\text{ with }
			q_k\in A
		\end{aligned}
		\right\}.
	\]
	We then say that $M(L)$ is the image of $L$ under mapping by the string transducer $M$.
\end{definition}

\begin{example}
Here is a simple example of a
 string transducer, which computes the successor of a non-negative integer.
A non-negative integer is represented as a word of the form $w\$$ where $w\in\{0,1\}^*$ is the minimum-length binary encoding of the integer (with the least significant digit appearing first), and $\$$ is an end-of-input symbol.
For example, we encode the numbers $0$, $4$, $10$ and $11$ as $\$$, $001\$$, $0101\$$ and $1101\$$, respectively.
Notice that $100\$$ would not be a valid string as it does not represent the number 1 with minimal length.
The string transducer 
 $M=(\Gamma,\Sigma,Q,A,q_0,\delta)$ where $\Gamma=\Sigma=\{0,1,\$\}$, $Q=\{q_0,q_1,q_2,q_3,q_4\}$, $A=\{q_4\}$, and $\delta\colon\Gamma\times Q\to \Sigma^*\times Q$ is described by
\begin{align*}
  \delta(0,q_0)&=(1,q_1), &
  \delta(0,q_1)&=(0,q_1), &
  \delta(0,q_2)&=(0,q_1), &
  \delta(0,q_3)&=(0,q_3), &
  \delta(0,q_4)&=(0,q_3),
  \\
  \delta(1,q_0)&=(0,q_0), &
  \delta(1,q_1)&=(1,q_2), &
  \delta(1,q_2)&=(1,q_2), &
  \delta(1,q_3)&=(1,q_3), &
  \delta(1,q_4)&=(1,q_3),
  \\
  \delta(\$,q_0)&=(1\$,q_4), &
  \delta(\$,q_1)&=(\$,q_3), &
  \delta(\$,q_2)&=(\$,q_4), &
  \delta(\$,q_3)&=(\$,q_3), &
  \delta(\$,q_4)&=(\$,q_3)
\end{align*}
computes the successor of a given number. 
For example, if $\$$, $001\$$, $0101\$$ and $1101\$$ are given to the transducer, then it will output the words $1\$$, $101\$$, $1101\$$ and $00101\$$, respectively. We can represent the string transducer by the
graph in \cref{fig:adding-machine} where the vertices are given by the state set $Q$, and for each transition $\delta(u,q)=(v,q')$ there is a labelled edge of the form $q\to^{u/v} q'$.

\begin{figure}[!hpt]
  \centering

  \begin{tikzpicture}[>=stealth,initial text={initial}]
    \node [initial,state] (q_0) {$q_0$};
    \node [state] (q_nocarry0) [below=6em of q_0] {$q_1$};

    \coordinate (midpoint) at ($(q_0.south)!0.5!(q_nocarry0.north)$); 

    \node [state] (q_nocarry1) [right=4em of midpoint] {$q_2$};
    \node [state] (q_fail) [right=8em of q_nocarry0] {$q_3$};
    \node [state,accepting] (q_accept) [right=8em of q_0] {$q_4$};

    \path[->]
      (q_0)
        edge [bend left] node [above] {$\$/1\$$} (q_accept)
        edge [bend right] node [left] {$0/1$} (q_nocarry0)
        edge [loop above] node [above] {$1/0$} ()
      (q_nocarry0)
        edge [bend right] node [below] {$\$/\$$} (q_fail)
        edge [loop left] node [left] {$0/0$} ()
        edge [bend left] node [above left=-0.5em] {$1/1$} (q_nocarry1)
      (q_nocarry1)
        edge node[below right=-0.25em] {$\$/\$$} (q_accept)
        edge [bend left] node [below right=-0.5em] {$0/0$} (q_nocarry0)
        edge [loop above] node [above] {$1/1$} ()
      (q_fail)
        edge [loop right] node [right] {$\$/\$,\ 0/0,\ 1/1$} ()
      (q_accept)
        edge[bend left] node [right]{$\$/\$,\ 0/0,\ 1/1$} (q_fail)
      ;

  \end{tikzpicture}

  \caption{Add one to a number encoded in binary with an end marker.}\label{fig:adding-machine}
\end{figure}

Note that the state
\begin{itemize}
  \item $q_0$ corresponds to prefixes of the form $1^n$ for some $n\in \mathbb N$;
  \item $q_1$ corresponds to prefixes of the form $w 0$ where $w\in \{0,1\}^*$;
  \item $q_2$ corresponds to prefixes which can be written as $w0v1$ where $w,v\in \{0,1\}^*$;
  \item $q_3$ corresponds to invalid input sequences; and
  \item $q_4$ corresponds to valid sequences of the form $w\$$ where $w\in \{0,1\}^*$.
\end{itemize}
For the interested reader, further examples of string transducers abound in the literature on self-similar groups, see for example \cite{grigorchuk2006}.
\end{example}

The following definition provides some useful notation when working with string transducers.
\begin{definition}\label{def:det-fst-maps}
	Let $M = (\Gamma,\Sigma,Q,A,q_0,\delta)$ be a string transducer.
	Then, for each pair of states $q,q'\in Q$, and words $w = w_1 w_2 \cdots w_k\in \Gamma^*$ and $w'\in \Sigma^*$, we write $q\to^{(w,w')}q'$ if there is a path from state $q$ to $q'$ which rewrites the word $w$ to $w'$; that is, if there is a sequence of states $q_1,q_2,\ldots,q_{k+1}\in Q$ such that
	\begin{itemize}
		\item $q = q_1$ and $q' = q_{k+1}$; and
		\item $\delta(w_i, q_i) = (u_i, q_{i+1})$ for each $i\in \{1,2,\ldots,k\}$ where $w' = u_1 u_2 \cdots u_k$.
	\end{itemize}
	Notice then that
	\[
		M(L)
		=
		\{
		w'\in \Sigma^*
		\mid
		q_0 \to^{(w,w')} q
		\text{ where }
		w\in L\text{ and }q\in A
		\}
	\]
	for each $L\subseteq \Gamma^*$.
\end{definition}

We then have the following. 

\begin{lemma}[Corollary 4.7 in \cite{Engelfriet1976}]\label{lem:edt0l-closed-under-fs-transduction}
	The family of EDT0L languages is closed under applying a string transducer.
  That is, if $L$ is an EDT0L language, and $M$ is a string transducer, then $M(L)$ is also an EDT0L language.
\end{lemma}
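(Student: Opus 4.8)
The plan is to show that given an EDT0L grammar $E = (\Gamma, V, I, H)$ for $L$ and a string transducer $M = (\Gamma,\Sigma,Q,A,q_0,\delta)$, one can build an EDT0L grammar for $M(L)$ by running the transducer "in parallel" with the grammar. The key difficulty is that a single table $h \in H$ may rewrite a nonterminal to a long word, and the transducer's behaviour on that word depends on which state it enters the corresponding region in — information not available locally when the table is applied. The standard fix is to refine the nonterminal alphabet so that it records a pair of transducer states: a nonterminal $v \in V$ is replaced by symbols $v_{p,q}$ meaning "this occurrence of $v$ will eventually expand to a terminal word $w'$ such that $p \to^{(w,w')} q$ in $M$" (where $w$ is the terminal word $v$ expands to in $E$). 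Terminal letters $\gamma \in \Gamma$ are likewise annotated as $\gamma_{p,q}$ whenever $\delta$ drives $p$ to $q$ reading $\gamma$, with the output string attached.

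First I would fix the extended nonterminal alphabet $V' = \{ v_{p,q} : v \in V,\ p,q \in Q\}$ together with "completed" terminal symbols, and choose the new initial symbol to be (a fresh symbol mapped by a first table to) $I_{q_0, a}$ for the accepting states $a \in A$ — since EDT0L has a single initial symbol, I would add one extra nonterminal $I'$ and a single initial table that nondeterministically (via the choice of table) sends $I'$ to some $I_{q_0,a}$. Next, for each original table $h \in H$ and each consistent way of assigning state-pairs, I would define a new table $h'$: for a nonterminal $v$ with image $h(v) = z_1 z_2 \cdots z_m$ over $\Gamma \cup V$, the image of $v_{p,q}$ under $h'$ is obtained by threading a sequence of intermediate states $p = r_0, r_1, \ldots, r_m = q$ through the factors, sending each terminal factor $z_j = \gamma$ to its transducer output string for the step $r_{j-1} \to r_j$ and each nonterminal factor $z_j = u$ to $u_{r_{j-1}, r_j}$. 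Since there are only finitely many such state-threadings, each $h$ spawns finitely many $h'$, keeping $H'$ finite. When no consistent threading exists for a given $v_{p,q}$ and choice of $h$, that table sends $v_{p,q}$ to a dead-end symbol (as in the proof of \cref{lem:lult-implies-finite-index}), so it can never contribute to a terminal word. Finally I would add a terminating table that erases all the now-fully-expanded annotated terminal symbols' markers, i.e. maps each completed symbol to its output string and kills anything still carrying an uncancelled nonterminal or a state mismatch.

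The correctness argument then splits into soundness and completeness. For completeness: given $w = w_1 \cdots w_k \in L$ with derivation $I \cdot (h_1 \cdots h_\ell) = w$ and an accepting run $q_0 \to^{(w, u)} a$ of $M$, I would show by induction on the derivation that the annotated run of tables $h_1' h_2' \cdots h_\ell'$ (with state-pairs chosen to match the unique transducer run on each sub-block, which is well-defined because $M$ is deterministic) produces from $I_{q_0,a}$ a sentential form whose "expansion consistency" is maintained at every step, ending in the annotated version of $u$, which the final table turns into $u \in M(L)$. For soundness: any terminal word produced by $E'$ arises from some choice of state-threadings that is globally consistent (the dead-end symbol rules out inconsistent choices surviving to a terminal string), and reading off the underlying $h_i$ and the underlying $\Gamma$-factors reconstructs a word $w \in L$ together with an accepting $M$-run witnessing that the output lies in $M(L)$; here I would use the determinism of $M$ to argue the threaded states are forced and hence the reconstructed run is a genuine run of $M$.

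The main obstacle I anticipate is the bookkeeping around nonterminals whose $h$-image is the empty word or a pure terminal word: a nonterminal $v_{p,q}$ with $h(v) = \varepsilon$ can only be consistent when $p = q$, and one must be careful that the "fold two adjacent state indices together" operation is exactly the transitive composition $p \to^{(w,w')} q$, so that no spurious runs are introduced and no legitimate run is lost. A secondary technical point is handling the single-initial-symbol and single-final-cleanup requirements of the EDT0L definition without breaking the table structure; this is routine but needs the dead-end symbol device to absorb the nondeterministic guesses that fail. None of this affects the index, so the construction also shows that the finite-index EDT0L languages are closed under string transducers, though the lemma as stated only asserts the EDT0L case.
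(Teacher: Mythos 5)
Your proposal is correct and follows essentially the same route as the paper's proof: annotate each nonterminal with a pair of transducer states, thread intermediate states through each table image (replacing terminal blocks by their transducer outputs), use initial tables indexed by accepting states, and send inconsistent guesses to a dead-end symbol. The only cosmetic difference is your annotated terminal symbols and final cleanup table, which the paper avoids by emitting the output $\Sigma$-strings directly when each table is applied.
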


\begin{proof}
	Let $L$ be an EDT0L language with EDT0L grammar $E = (\Gamma, V, I, H)$, and $M = (\Gamma, \Sigma,Q, A, q_0, \delta)$ be a string transducer.
	In this proof, we construct an EDT0L grammar $E' = (\Sigma, V', I', H')$ for the language $M(L)$, thus showing that $M(L)$ is EDT0L.

In this paragraph we give an informal description of our construction.
We construct our grammar $E'$ such that it contains a nonterminal for every choice of nonterminal $v\in V$ of $E$, and every choice of states $q,q'\in Q$ of $M$.
That is,  $E'$ will have nonterminals of the form $X_{v,q,q'}$ where $v\in V$ and $q,q'\in Q$.
We construct $E'$ such that it has the property that, if
\begin{equation}\label{eq:transformed-word}
    w' =
    w_0'
    X_{v_1,q_1,q'_1} w_1'
    X_{v_2,q_2,q'_2} w_2'
    \cdots
    X_{v_k,q_k,q'_k} w_k'
\end{equation}
is a sentential form that appears in some production of $E'$ where each $w_i'\in \Sigma^*$, then there exists some sentential form
\[
    w
    =
    w_0
    v_1 w_1
    v_2 w_2
    \cdots
    v_k w_k
\]
which appears in a production of $E$ where each $w_i\in \Gamma^*$ such that
\begin{itemize}
    \item $q_0 \to^{(w_0,w'_0)} q_1$,
    \item $q_i \to^{(w_i, w_i')} q_{i+1}$ for each $i\in \{1,2,\ldots,k-1\}$ and
    \item  $q_k \to^{(w_k, w'_k)} q'$ for some $q'\in A$.
\end{itemize}
That is, the words $w_i'$ in (\ref{eq:transformed-word}) correspond to words in $\Gamma^*$ which have been rewritten by $M$, and the nonterminals $X_{v_i,q_i,q'_i}$ are placeholders for words which are yet to be rewritten by $M$. Moreover, the states $q_i,q_i'$ are included to ensure that the sentential form $w'$ in (\ref{eq:transformed-word}) corresponds to a path in the automaton $M$.
We achieve this by constructing the tables of $E'$ such that, for every replacement
\begin{equation}\label{eq:transformed-word2-before}
    v\cdot h = w_0 u_1 w_1 u_2 w_2 \cdots u_k w_k,
\end{equation}
where $v\in V$, $h\in H$, each $w_i\in \Gamma^*$ and each $u_i\in V$, we have a table $h' \in H'$ with the property that
\begin{equation}\label{eq:transformed-word2}
    X_{v,q,q'}
    \cdot h'
    =
    w_0'
    X_{u_1, q_1,q_1'} w_1'
    X_{u_2, q_2,q_2'} w_2'
    \cdots
    X_{u_k, q_k,q_k'} w_k'
\end{equation}
where
\begin{itemize}
    \item $q \to^{(w_0,w'_0)} q_1$,
    \item $q_i \to^{(w_i, w'_i)} q_{i+1}$ for each $i\in \{1,2,\ldots,k-1\}$ and
    \item  $q_k \to^{(w_k, w'_k)} q'$.
\end{itemize}
Thus, the application of the table $h'$ as in (\ref{eq:transformed-word2}) corresponds to the application of $h$ in (\ref{eq:transformed-word2-before}) where each word in $\Gamma^*$ has been rewritten by $M$, and we are representing a path in $M$ from $q$ to $q'$.
It then follows that the language of all words in $M(L)$ corresponds to the union of the languages obtained from EDT0L grammars with initial nonterminals of the form $X_{I,q_0,a}$ where $a\in A$: we construct our grammar to correspond to this union by introducing a new start nonterminal $I'$ along with some additional tables $t_{\mathrm{init},a}\in H'$.
We finish this informal description of our construction by noting that there may be cases where there are no choices of states $q_i,q_i'\in Q$ for which we can define an expansion as in (\ref{eq:transformed-word2}), thus we introduce an additional nonterminal $\mathfrak d$ which we call a \emph{dead-end symbol} which stops  the production in such situations.

	\medskip

	\noindent\underline{1.~Nonterminals}:\nopagebreak

	\smallskip\nopagebreak\noindent
	The set of nonterminals of $E'$ is given as
	\[
		V' = \{
		X_{v,q,q'}
		\mid
		v\in V
		\text{ and }
		q,q' \in Q
		\}\cup \{I', \mathfrak{d}\}.
	\]
	Here, the nonterminals $X_{v,q,q'}$  of $E'$ correspond to nonterminals $v\in V$ of $E$ which generate words which are read by the string transducer $M$ during a path from state $q$ to $q'$;
	the nonterminals $I'$ and $\mathfrak{d}$ are disjoint symbols which are used as the starting symbol and a \emph{dead-end symbol} respectively.
	In our grammar, we ensure that every table maps $\mathfrak{d}$ to itself.
	Thus, if a $\mathfrak{d}$ appear in a sentential form, then there is no way of removing it to continue to generate a word in the language.

	\medskip

	\noindent\underline{2.~Initial tables $t_\mathrm{init,a} \in H'$}:\nopagebreak

	\smallskip\nopagebreak\noindent
	For each accepting state $a\in A$ of the string transducer $M$, we introduce a table $t_{\mathrm{init},a} \in H'$ such that
	\[
		v \cdot t_{\mathrm{init},a}
		=
		\begin{cases}
			v            & \text{if }v\in \Sigma, \\
			X_{I,q_0,a}  & \text{if }v=I',        \\
			\mathfrak{d} & \text{otherwise}.
		\end{cases}
	\]
(Note that we write $v \cdot t_{\mathrm{init},a}=v$ for $v\in\Sigma$ only to comply with \cref{def:edt0l-grammar}, in our grammar to produce a word in the language such tables will only be applied to the sentential form $I'$.)

	The remaining tables in $H'$ are modified versions of tables in $H$, described as follows.

	\medskip

	\noindent\underline{3.~Tables $t_{h,r} \in H'$}:\nopagebreak

	\smallskip\nopagebreak\noindent
	For each $h\in H$ and each $v\in V$, we have
	\[
		v \cdot h = w_0 x_1 w_1 x_2 w_2 \cdots x_{k_{v}} w_{k_{v}}
	\]
	for some $k_{v}\in \mathbb{N}$ where each $w_i\in \Sigma^*$ and each $x_i \in V$.
  Then for each pair of states $q,q' \in Q$, we define a finite set $C_{h,v,q,q'}\subset (\Sigma\cup V')^*$ as
	\[
		C_{h,v,q,q'}
		=
		\left\{
		\begin{aligned}
			u_0 \,
			X_{x_1,q_1,q_1'} \, u_1\,
			X_{x_2,q_2, q_2'}\, u_2\quad \\
			\cdots
			X_{x_{k_v},q_{k_v},q_{k_v}'}\, u_{k_v}
		\end{aligned}
		\ \middle|\
		\begin{aligned}
			 &
			q \to^{(w_0,u_0)} q_1,                                         \\
			 &
			q_i' \to^{(w_i, u_i)} q_{i+1} \text{ for each }1 \leq i < k_v, \\
			 &
			\text{and }q_{k_v}'\to^{(w_{k_v}, u_{k_v})} q'
		\end{aligned}
		\right\}
	\]
	Notice that each set $C_{h,v,q,q'}$ is finite as there are only finitely many choices for each state $q_i'\in Q$;
    the state $q_1$ is completely determined from $q$ and $w_0$;
    for each $i\geq 1$, that state $q_{i+1}$ is completely determined from $q_i'$ and $w_i$; and the words  $u_i\in \Sigma^*$ are completely determined from the state $q$ and states $q_i$.

	From the sets $C_{h,v,q,q'}$, we define a set of functions $R_h\subset((\Sigma\cup V')^*\cup\{\mathfrak{d}\})^{V\times Q\times Q}$ as
	\[
		R_h
		=
		\left\{
		r\colon V\times Q\times Q
		\to
		((\Sigma\cup V')^*\cup\{\mathfrak{d}\})
		\ \middle|\
		\begin{aligned}
			\text{where }r(v,q,q')\in C_{h,v,q,q'}\cup \{\mathfrak{d}\} \\
			\text{ for each }v\in V\text{ and }q,q'\in Q
		\end{aligned}
		\right\}.
	\]
	Notice that $R_h$ is finite, in particular,
	\[
		|R_h|
		\leq \prod_{v\in V}\prod_{q\in Q}\prod_{q'\in Q} (|C_{h,v,q,q'}|+1).
	\]
	Notice that we added $\mathfrak{d}$ as an option in $R_h$ so that there is always a choice for the value of $r(v,q,q')$, in particular, so that there is a choice when $C_{h,v,q,q'}$ is empty.

	For each $r\in R_h$, we then define a table $t_{h,r}\in H'$ such that
	\[
		X_{v,q,q'}
		\cdot t_{h,r}
		=
		r(v,q,q')
	\]
	for each $v\in V$, and each $q,q'\in Q$.

	\medskip

	\noindent\underline{4.~Soundness and completeness}:\nopagebreak

	\smallskip\nopagebreak\noindent
	Suppose that $w \in L(E')$, then there must exist some $\tau_1,\tau_2,\ldots,\tau_k\in H'$ such that
	\[
		w = I' \cdot (\tau_1 \tau_2 \cdots \tau_k).
	\]
	From the definition of the initial tables $t_{\mathrm{init},a}$, we see that
	\begin{itemize}
		\item $\tau_1 = t_{\mathrm{init},a}$ for some $a\in A$; and
		\item $\tau_i = t_{h_i, r_i}$ for each $i\neq 1$.
	\end{itemize}
	From the construction of the tables of the form $t_{h,r}$, the word
	\[
		u = I\cdot (h_2 h_3\cdots h_k) \in L(E)
	\]
	has the property that $q_0 \to^{(u,w)} a$, and thus $w \in M(L(E))$.
	Hence, $M(L(E))\subseteq L(E')$.

	Suppose that $w\in M(L(E))$, then there exists some word $u\in L(E)$ and an accepting state $a\in A$ such that $q_0\to^{(u,w)} a$.
	Thus, there exists a sequence $h_1h_2\cdots h_k \in H^*$ such that
	\[
		u = I \cdot (h_1 h_2 \cdots h_k)\in L(E).
	\]
	From the construction of our tables, we then see that
	\[
		w = I' \cdot ( t_{\mathrm{init},a} t_{h_1,r_1} t_{h_2, r_2} \cdots t_{h_k, r_k} )
	\]
	for some choice of functions $r_1,r_2,\ldots,r_k$.
	Hence, we see that $L(E') \subseteq M(L(E))$.

	We now conclude that $L(E')= M(L(E))$, and thus the family of EDT0L languages is closed under mapping by string transducer.
\end{proof}

The following lemma, which makes use of string transducers, will be used in the next section. Recall that a subset $W\subset \Gamma^+$ is an \emph{antichain} with respect to prefix order if for each choice of words $u,v\in W$, the word $u$ is not a proper prefix of $v$.

\begin{lemma}\label{lem:f_is_detfsm}
	Let $W\subset \Gamma^+$ be a finite antichain with respect to prefix order.
	For each word $w\in W$, we fix a word $x_w\in \Sigma^*$.
	Define a map $f\colon \mathcal{P}(\Gamma^*) \to \mathcal{P}(\Sigma^*)$ as
	\[
		f(L)
		=
		\{
		x_{w_1} x_{w_2} \cdots x_{w_k} \in \Sigma^*
		\mid
		w_1 w_2 \cdots w_k \in L
		\text{ where each }
		w_i\in W
		\}.
	\]
	Then, there is a string transducer $M = (\Gamma,\Sigma, Q, A, q_0, \delta)$ such that $f(L) = f(L\cap W^*) = M(L)$.
\end{lemma}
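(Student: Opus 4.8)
The plan is to build the transducer $M$ directly from the finite antichain $W$, using its states to remember the (necessarily unique) partial factorisation of the input read so far, and to emit the corresponding blocks $x_w$ as each factor of $W$ is completed. First I would fix a convenient encoding of progress: since $W$ is a finite antichain with respect to the prefix order, every word in $W^*$ has at most one factorisation into elements of $W$, so the relevant ``memory'' is just the current proper prefix of some $w \in W$ that we are in the middle of reading. Let $P = \{ p \in \Gamma^* \mid p \text{ is a proper prefix of some } w \in W\}$; this is a finite set because $W$ is finite, and it contains $\varepsilon$. I would take $Q = P \cup \{\mathfrak{bad}\}$ for a fresh sink state $\mathfrak{bad}$, with initial and unique accepting state $q_0 = \varepsilon$ (we accept precisely when we have just finished a full factor, i.e.\ are back at the empty prefix), and $A = \{\varepsilon\}$.

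Next I would define $\delta$. Reading a letter $a \in \Gamma$ from state $p \in P$: consider the word $pa$. If $pa \in P$ (still a proper prefix of some element of $W$), output $\varepsilon$ and move to state $pa$. If $pa \in W$ (we have just completed a factor), output $x_{pa}$ and move to state $\varepsilon$. Note these two cases are mutually exclusive precisely because $W$ is an antichain: no element of $W$ is a proper prefix of another, so $pa$ cannot be simultaneously in $W$ and in $P$. If $pa$ is neither, output anything (say $\varepsilon$) and move to $\mathfrak{bad}$; from $\mathfrak{bad}$, every letter outputs $\varepsilon$ and stays at $\mathfrak{bad}$. This is a total deterministic transition function $\delta\colon \Gamma \times Q \to \Sigma^* \times Q$ as required by \cref{def:det-fst}.

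Then I would verify the three claimed equalities. For $M(L) = M(L \cap W^*)$: by the definition of $M(L)$ via accepting runs, a word $w = w_1 \cdots w_k \in L$ contributes an output only if the run from $q_0$ ends in the accepting state $\varepsilon$; tracing the transitions, the run avoids $\mathfrak{bad}$ and returns to $\varepsilon$ exactly when $w$ admits a factorisation into elements of $W$, i.e.\ $w \in W^*$ (and this factorisation is then unique, and the concatenated output is exactly $x_{w_1}\cdots x_{w_k}$). Hence the accepting runs on $L$ are precisely the accepting runs on $L \cap W^*$, giving $M(L) = M(L \cap W^*)$; and reading off the outputs of those runs gives exactly $f(L \cap W^*) = f(L)$ by the definition of $f$. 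The bookkeeping here is the substance of the proof, but it is routine: the only subtlety is the antichain hypothesis, which is exactly what guarantees determinism of the ``have I finished a factor?'' decision and uniqueness of the factorisation, so $\delta$ is well-defined as a function and the run is unambiguous.

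I do not anticipate a genuine obstacle; the one point requiring a little care is making sure the ``complete a factor'' transition and the ``extend a proper prefix'' transition never both apply, which is precisely the antichain condition, and that from a non-accepting state $p \neq \varepsilon$ the machine, even when fed a valid suffix, lands correctly — this follows because the unique factorisation of any $v \in W^*$ is read greedily from left to right. I would present the construction, then a short paragraph checking by induction on the length of the input that the state after reading a prefix $u$ of the input records exactly the ``unfinished tail'' of $u$ relative to the greedy $W$-factorisation (or is $\mathfrak{bad}$ if no such factorisation of a prefix-extension exists), from which all three equalities are immediate.
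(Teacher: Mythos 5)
Your construction is essentially identical to the paper's: states indexed by proper prefixes of words in $W$, a single accepting state at the empty prefix, a sink fail state, and transitions that either extend the current prefix, complete a factor while emitting $x_w$, or fail — with the antichain hypothesis guaranteeing the cases are mutually exclusive. The only (trivial) point the paper treats separately that you gloss over is the degenerate case $W=\emptyset$, where the set of proper prefixes is empty and one simply takes $A=\emptyset$.
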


\begin{proof}
	Firstly we notice that if $W = \emptyset$, then $f(L) = \emptyset$ for each language $L\subseteq \Gamma^*$.
	In this case, any such string transducer with $A = \emptyset$ satisfies the lemma statement.
	Thus, in the remainder of this proof, we assume that $W\neq \emptyset$.

	Let $w\in \Gamma^*$ be a word for which $w\in W^*$.
	Then, since $W$ is a finite antichain with respect to the prefix order, there is a unique factorisation of $w$ as
	$
		w =w_1 w_2 \cdots w_k
	$
	where each $w_i\in W$.

	We construct a string transducer $M = (\Gamma,\Sigma, Q, A, q_0, \delta)$ as follows.
	For each proper prefix $u \in \Gamma^*$ of a word $w\in W$, we introduce a state $q_u \in Q$.
	The initial state is $q_0 =q_\varepsilon$, and the set of accepting states is $A = \{q_\varepsilon\}$.
	Further, our automaton has one additional state $q_\mathrm{fail}$ which is a fail state; that is,
	\[
		\delta(g, q_\mathrm{fail}) = (\varepsilon, q_\mathrm{fail})
	\]
	for each $g\in \Gamma$.
	We then specify the remaining transitions as follows.

	For each state $q_u$ with $u\in \Gamma^*$, and each $g\in \Gamma$, we define the transition
	\[
		\delta(g, q_u)
		=
		\begin{cases}
			(x_w, q_\varepsilon)           & \text{if } w = ug \in W,                                \\
			(\varepsilon, q_{ug})          & \text{if } ug\text{ is a proper prefix of some }w\in W, \\
			(\varepsilon, q_\mathrm{fail}) & \text{otherwise}.
		\end{cases}
	\]
	We then see that the string transducer $M$ is now completely specified.
	It is clear from our construction that $f(L)=M(L)$ for each $L\subseteq\Gamma^*$.
\end{proof}

\section{EDT0L word problem}\label{sec:edt0l-wp}

A standard approach to showing that having word problem in a given formal class $\mathcal C$
is invariant under change of finite generating set is to rely on the fact that $\mathcal C$ is closed under inverse monoid homomorphism. For example, this holds when $\mathcal C$ 
is any class in the Chomsky hierarchy, indexed, ET0L, and finite-index EDT0L (for the class of finite-index EDT0L languages, see \cite[Theorem~5]{Rozenberg1978}).

As noted in the introduction, 
EDT0L languages are not closed under taking inverse monoid homomorphism.
In particular, the language $L_1= \{a^{2^n}\mid n\in \mathbb N\}$ can easily be shown to be EDT0L, however, the language $L_2=\{w\in \{a,b\}^*\mid |w|=2^n \text{ for some }n\in \mathbb{N}\}$, which can be written as an inverse monoid homomorphism of $L_1$, is known to not be EDT0L (see either the proof of Theorem~1 on p.\,22 of \cite{Rozenberg1973}, or Corollary~2 on p.\,22 of \cite{Ehrenfeucht1975}).
Instead, we prove \cref{lem:taking-a-submonoid-of-EDT0L} via the following steps.

Recall from \cref{sec:background} that if $G$ is infinite then it has geodesics of arbitrary length.

\begin{lemma}\label{lem:example-antichain}
	Suppose that $G$ is an infinite group with finite monoid generating set $X$.
	Fix a finite number of words $u_1,u_2,\ldots,u_k\in X^*$.
	Then there exists a choice of non-empty words $w_1,w_2,\ldots,w_k\in X^*\setminus \{\varepsilon\}$, such that each $\overline{w_i}=1$ and
	\[
		W = \{
		w_1 u_1, w_2 u_2,\ldots, w_k u_k
		\}
	\]
	is an antichain in the sense of \cref{lem:f_is_detfsm}; that is, for each choice of words $x,y\in W$, the word $x$ is not a proper prefix of $y$.
\end{lemma}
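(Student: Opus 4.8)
The plan is to build the words $w_i$ one at a time, using the fact that $G$ is infinite to find arbitrarily long relator words (words that represent the identity), and to choose each new relator long enough to avoid all prefix-relations with the words already constructed. First I would record the key input: since $G$ is infinite, by property (3) of geodesics in \cref{sec:background} it has geodesics of unbounded length, so in particular for any $N$ there is a word $g_N \in X^*$ with $|g_N| \geq N$ and $\overline{g_N}$ of word-length $\geq N$ in $G$. Taking such a $g_N$ and concatenating it with a geodesic representative of $\overline{g_N}^{-1}$ (which exists since $X$ generates $G$ as a monoid, so inverses are expressible as words) yields, for every $N$, a \emph{non-empty} word $r_N \in X^* \setminus\{\varepsilon\}$ with $\overline{r_N} = 1$ and $|r_N| \geq N$. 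These $r_N$ are the building blocks for the $w_i$.

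Next I would set up the construction inductively. Let $m = \max_i |u_i|$. Suppose $w_1 u_1, \dots, w_{j-1} u_{j-1}$ have been chosen so that $\{w_1 u_1, \dots, w_{j-1}u_{j-1}\}$ is an antichain and each $\overline{w_i} = 1$. To choose $w_j$, pick a relator word $r_N$ with $N$ strictly larger than $\max(|w_1 u_1|, \dots, |w_{j-1} u_{j-1}|, m)$ and set $w_j \coloneqq r_N$ (or $r_N$ followed by a relator long enough — the point is just to make $|w_j|$ exceed everything so far). Then set $w_j u_j$. I claim the enlarged set is still an antichain: (i) $w_j u_j$ cannot be a proper prefix of any $w_i u_i$ with $i < j$ because $|w_j u_j| > |w_j| = N > |w_i u_i|$; (ii) no $w_i u_i$ with $i < j$ can be a proper prefix of $w_j u_j$, because $|w_i u_i| < N = |w_j| \leq $ the prefix of $w_j u_j$ of length $|w_j|$ is exactly $w_j$; but wait — this needs $|w_i u_i| \leq |w_j|$ so that $w_i u_i$ would have to be a prefix of $w_j$ itself, and then $\overline{w_i u_i} = \overline{w_i}\,\overline{u_i} = \overline{u_i}$, which would force a prefix of the identity-word $w_j$ to represent $\overline{u_i}$; I need instead to argue directly that $w_i u_i$ is not a prefix of $w_j$ by the choice of $w_j$. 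The cleanest fix is: when I invoke \cref{sec:background} property (3) I should build each $w_j$ itself as a word no proper prefix of which, of length in the relevant range, equals any $w_i u_i$ — and this is automatic as long as the $w_i u_i$ are pairwise distinct and I simply make the $w_j$ have strictly increasing lengths, because the only way $w_i u_i$ (short) is a prefix of $w_j u_j$ (long) is if $w_i u_i$ is a prefix of $w_j$, and I control $w_j$. Concretely, I would choose $w_j = r_N$ where $r_N$ is obtained from a geodesic $g$ of length exactly $N$ followed by a geodesic for $\overline g^{-1}$; by property (1), every proper prefix of $g$ is geodesic, hence no proper nonempty prefix of $g$ represents $1$ — this lets me rule out collisions. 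So the induction goes through once $N$ is chosen larger than all previous lengths, giving distinct $w_i u_i$ of strictly increasing length with pairwise no prefix relation.

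The main obstacle I expect is precisely the bookkeeping in step (ii): ruling out that an earlier, shorter word $w_i u_i$ is a proper prefix of the later $w_j u_j$. Making lengths strictly increasing handles half of the antichain condition trivially but not this half; the resolution is to use the geodesic structure of $w_j$ (property (1): prefixes of geodesics are geodesic; property (2): the only geodesic for $1$ is $\varepsilon$) so that $w_j$ has no ``accidental'' subword behaviour matching a prior $w_i u_i$, or, alternatively, to be slightly more generous and choose $w_j$ from a sufficiently rich supply of relators of each large length and invoke a counting/pigeonhole argument to dodge the finitely many forbidden prefixes. Either way the argument is elementary; the only care needed is to ensure the $w_i$ are non-empty (guaranteed since $\overline{g} \neq 1$ for $|g| \geq 1$ geodesic forces $r_N \neq \varepsilon$) and to handle the degenerate case $k = 0$ (where $W = \emptyset$ is vacuously an antichain).
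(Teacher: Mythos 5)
Your construction is essentially the paper's: each $w_j$ is a long geodesic followed by a geodesic for its inverse, with the geodesic part chosen longer than everything built so far, and the paper likewise rules out prefix relations by tracking where each $w_ju_j$ stops being geodesic. The step you flag as the main obstacle closes in one line that you almost write but stop short of: if $w_iu_i$ (with $i<j$) were a prefix of $w_ju_j$, then since $|w_iu_i|<N=|g|$ it would be a prefix of the geodesic $g$, hence itself a geodesic; but $\overline{w_iu_i}=\overline{u_i}$ and $|w_iu_i|>|u_i|$ because $w_i\neq\varepsilon$, so $w_iu_i$ is not a geodesic --- your stated consequence that no proper nonempty prefix of $g$ represents $1$ only covers the case $\overline{u_i}=1$.
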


\begin{proof}
	We begin by constructing the words $w_1,w_2,\ldots,w_k$ as follows.

	Let $\alpha_1 \in X$ be a nontrivial generator, that is, $\overline{\alpha_1}\neq 1$; then let $\beta_1 \in X^*$ be a geodesic with $\overline{\alpha_1\beta_1} = 1$ (Notice that if $X$ is a symmetric generating set, then we may choose $\beta_1 = \alpha_1^{-1}$).
	From this selection, we then define $w_1 = \alpha_1\beta_1$.
	We now choose the words $w_2,w_3,\ldots,w_k$ sequentially as follows.

	For each $i\geq 2$, we choose a geodesic $\alpha_i\in X^*$ with length $|\alpha_i|= |\alpha_{i-1}\beta_{i-1}|+1$.
	We then choose a geodesic $\beta_i\in X^*$ such that $\overline{\alpha_i\beta_i}=1$ (Notice that if $X$ is a symmetric generating set, then we may choose $\beta_i = \alpha_i^{-1}$).
	Then, define $w_i = \alpha_i\beta_i$.

	We have now selected a sequence of words $w_1,w_2,\ldots,w_k$.
	For each word $w_i$, let $\gamma_i$ denote the longest prefix which is a geodesic.
	We then notice that
	\[
		|\gamma_{i-1}| < |w_{i-1}| < |\alpha_{i}| \leq |\gamma_{i}|
	\]
	for each $i\in\{2,3,\ldots,k\}$.
	Thus, $|\gamma_1|< |\gamma_2|< \cdots< |\gamma_k|$ and $|\gamma_i| < |w_iu_i|$ for each $i$.

	We now see that, if $w_i u_i$ is a proper prefix of some word $v\in X^*$, then $\gamma_i$ is also the longest prefix of $v$ which is a geodesic.
	Hence, we conclude that the set
	\[
		W = \{
		w_1 u_1, w_2 u_2,\ldots, w_k u_k
		\}
	\]
	is an antichain as required.
\end{proof}

Recall from the introduction 
that the word and coword problem for a group $G$ with respect to a generating set $X$ are denoted as 
\begin{align*}
	\mathrm{WP}(G,X)
	 & \coloneq
	\{
	w \in X^*
	\mid
	\overline{w} = 1
	\}
	 & \text{and} &  &
	\mathrm{coWP}(G,X)
	\coloneq
	X^*\setminus \mathrm{WP}(G,X).
\end{align*}
We have the following result for groups with EDT0L (co-)word problem.

\begin{proposition}\label{prop:wp-edt0l-fi}
	Let $G$ be a group with finite monoid generating set $X$.
	If the word problem $\mathrm{WP}(G,X)$ is EDT0L, then it is EDT0L of finite index.
	Moreover, if the coword problem $\mathrm{coWP}(G,X)$ is EDT0L, then it is EDT0L of finite index.
\end{proposition}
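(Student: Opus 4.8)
The plan is to reduce to \cref{lem:shuffle-product-edt0l}. Concretely, I will produce a single string transducer $M$, depending only on $G$ and $X$, for which
\[
	M(\WP(G,X)) = \WP(G,X)\uparrow c^*
	\qquad\text{and}\qquad
	M(\mathrm{coWP}(G,X)) = \mathrm{coWP}(G,X)\uparrow c^*
\]
where $c$ is a letter not in $X$; once this is established, \cref{lem:edt0l-closed-under-fs-transduction} carries the EDT0L property through $M$ and \cref{lem:shuffle-product-edt0l} upgrades it to finite index. First, though, I would deal with the case $G$ finite on its own: then both $\WP(G,X)$ and $\mathrm{coWP}(G,X)$ are regular by Anisimov's theorem \cite{Anisimov1971}, hence EDT0L of index $1$ by \cref{lem:reg_is_edt0l}. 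So assume from now on that $G$ is infinite.

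To build $M$, write $X = \{a_1,\dots,a_n\}$ and pick a fresh letter $c\notin X$. I would apply \cref{lem:example-antichain} to the words $u_i = a_i$ (for $1\leq i\leq n$) together with $u_{n+1} = \varepsilon$, obtaining non-empty words $w_1,\dots,w_{n+1}\in X^*$ with every $\overline{w_i} = 1$ for which $W = \{w_1 a_1,\dots,w_n a_n, w_{n+1}\}$ is a prefix-antichain contained in $X^+$. Setting $x_{w_i a_i} = a_i$ for $1\leq i\leq n$ and $x_{w_{n+1}} = c$ makes the assignment $v\mapsto x_v$ a bijection from $W$ onto $X\cup\{c\}$, so \cref{lem:f_is_detfsm}, applied with $\Gamma = X$ and $\Sigma = X\cup\{c\}$, supplies a string transducer $M$ with $M(L) = f(L)$ for every $L\subseteq X^*$.

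The heart of the argument is checking the two displayed identities. Since $W$ is an antichain and each $x_v$ is a single letter, a word $z\in(X\cup\{c\})^*$ lies in $f(L)$ exactly when $\phi(z)\in L$, where $\phi\colon (X\cup\{c\})^*\to X^*$ is the monoid homomorphism with $\phi(a_j) = w_j a_j$ and $\phi(c) = w_{n+1}$. Because each $\overline{w_i} = 1$, the element $\overline{\phi(z)}\in G$ equals the image of the word obtained from $z$ by erasing every occurrence of $c$; therefore $z\in f(\WP(G,X))$ if and only if erasing the $c$'s from $z$ produces a word in $\WP(G,X)$, which is precisely the condition for $z$ to lie in $\WP(G,X)\uparrow c^*$ (note in particular that $c^m$ is included, since $\varepsilon\in\WP(G,X)$). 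The same computation with $X^*\setminus\WP(G,X)$ in place of $\WP(G,X)$ yields the coword identity.

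Granting this, the proposition is immediate: if $\WP(G,X)$ is EDT0L then \cref{lem:edt0l-closed-under-fs-transduction} makes $M(\WP(G,X)) = \WP(G,X)\uparrow c^*$ EDT0L, whence $\WP(G,X)$ is EDT0L of finite index by \cref{lem:shuffle-product-edt0l}; the coword statement is proved word for word the same way. I expect the one place requiring genuine care to be the displayed identities — in particular verifying that the forced factorisation of a word $z$ into the single-letter blocks $x_v$ really does reconstruct the homomorphism $\phi$, that every word over $X\cup\{c\}$ lies in the domain of that factorisation, and that the degenerate words in $c^*$ are handled correctly. None of this is conceptually difficult: the substance has already been front-loaded into \cref{lem:example-antichain,lem:f_is_detfsm,lem:shuffle-product-edt0l}.
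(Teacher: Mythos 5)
Your proposal is correct and follows essentially the same route as the paper's proof: dispose of the finite case via regularity, use \cref{lem:example-antichain} with $u_i=a_i$ and $u_{n+1}=\varepsilon$ to build the antichain $W$, realise $f$ as a string transducer via \cref{lem:f_is_detfsm}, and then combine \cref{lem:edt0l-closed-under-fs-transduction} with \cref{lem:shuffle-product-edt0l}. Your verification of $f(\WP(G,X))=\WP(G,X)\uparrow c^*$ through the homomorphism $\phi$ (so that $f(L)=\phi^{-1}(L)$) is a slightly cleaner phrasing of the same check the paper performs by example, and it is sound.
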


\begin{proof}
	Notice that if $G$ is finite, then this result follows from the fact that the word problem of a finite group is a regular language (see~\cite[Theorem 1]{Anisimov1971}) and \cref{lem:reg_is_edt0l}.
	Thus, in the remainder of this proof we may assume that $G$ is an infinite group.

	Let $c$ be a letter which is disjoint from the alphabet $X = \{x_1,x_2,\ldots,x_n\}$.
	From \cref{lem:example-antichain}, there exists a choice of words $w_1,w_2,\ldots,w_n,w_{n+1}\in X^*$ such that
	\[
		W
		=
		\{
		w_1 x_1, w_2 x_2,\ldots,w_n x_n, w_{n+1}
		\}
	\]
	is an antichain with respect to the prefix order where each $\overline{w_i}=1$.

	We define a map $f\colon \mathcal{P}(X^*)\to\mathcal{P}((X\cup \{c\})^*)$ as
	\[
		f(L)
		=
		\left\{
		y_{i_1} y_{i_2} \cdots y_{i_k} \in 
		{(X\cup \{c\})^*}
		\ \middle|\
		\begin{aligned}
			(w_{i_1} u_{i_1})(w_{i_2} u_{i_2})\cdots(w_{i_k} u_{i_k}) \in L
			\\\text{ where each }
			i_j \in \{1,2,\ldots,n+1\}
		\end{aligned}
		\right\}
	\]
	where $u_i = y_i = x_i$ for each $i\in \{1,2,\ldots,n\}$, $u_{n+1}=\varepsilon$, and $y_{n+1}=c$.
	From \cref{lem:f_is_detfsm}, $f$ is a mapping by string transducer.

	Notice that $f(\mathrm{WP}(G,X)) = \mathrm{WP}(G,X)\uparrow c^*$ and $f(\mathrm{coWP}(G,X)) = \mathrm{coWP}(G,X)\uparrow c^*$.
  In particular, given some word $v= v_1v_2\cdots v_k\in X^*$, if $v\in \mathrm{WP}(G,X)$ then
  \[
    v'\coloneqq
    (w_{n+1})^{i_1} v_1
    (w_{n+1})^{i_2} v_2
    (w_{n+1})^{i_3} v_3
    \cdots
    (w_{n+1})^{i_k} v_k
    (w_{n+1})^{i_{k+1}}
    \in \mathrm{WP}(G,X)
  \]
  for each $i_j\in \mathbb N$ where $w_{n+1}\in X^*$ is the word as in the set $W$ given above.
  Then $f(v')$ is the word $c^{i_1} v_1 c^{i_2} v_2\cdots c^{i_k} v_k c^{i_{k+1}} \in \mathrm{WP}(G,X)\uparrow c^*$.
  Hence, we can construct any word in $\mathrm{WP}(G,X)\uparrow c^*$ in this way.
  A similar statement also holds for $\mathrm{coWP}(G,X)$ and $\mathrm{coWP}(G,X)\uparrow c^*$.

	From \cref{lem:edt0l-closed-under-fs-transduction}, we see that $\mathrm{WP}(G,X)\uparrow c^*$ and $\mathrm{coWP}(G,X)\uparrow c^*$ are EDT0L languages.
	Thus, from \cref{lem:shuffle-product-edt0l}, we conclude that $\mathrm{WP}(G,X)$ and $\mathrm{coWP}(G,X)$ are both EDT0L of finite index.
\end{proof}

We then obtain the following result as a corollary to the above Proposition.

\CorInvariantsDFSM*

\begin{proof}
Let $X$ be a finite monoid generating set for $G$, let $Y$ be a finite monoid generating set for a submonoid of $G$, and suppose that $\mathrm{WP}(G,X)$ (resp.~$\mathrm{coWP}(G,X)$) is EDT0L. Then by \cref{prop:wp-edt0l-fi},
$\mathrm{WP}(G,X)$ (resp.~$\mathrm{coWP}(G,X)$) is EDT0L of finite index. Let $\psi\colon Y^*\to X^*$ be a monoid homomorphism such that $\psi(y)=_G y$ for each $y\in Y$. Then $\mathrm{WP}(G,Y)=\psi^{-1}\left(\mathrm{WP}(G,X)\right)$ (resp.~$\mathrm{coWP}(G,Y)=\psi^{-1}\left(\mathrm{coWP}(G,X)\right)$).
From~\cite[Theorem~5]{Rozenberg1978}, it is known that the class of EDT0L languages of finite index is closed under inverse monoid homomorphisms, thus we conclude that the language $\mathrm{WP}(G,Y)$ (resp.~$\mathrm{coWP}(G,Y)$) is EDT0L of finite index, so in particular the language is EDT0L.
\end{proof}

\section{Non-branching multiple context-free languages}\label{sec:multiple-context-free}

In this section, we introduce non-branching multiple context-free grammars, and define what it means for such a grammar to be non-permuting and non-erasing.
We then show that every finite-index EDT0L language can be generated by a non-branching multiple context-free grammar which is both non-permuting and non-erasing.
We conclude this section by providing a normal form for grammars of this type which is then used throughout the proof of our main result in \cref{sec:main-theorem}.

\begin{definition}[Non-branching multiple context-free grammar]\label{def:nonbranching-mcf}
	Let $\Sigma$ be an alphabet, and $Q$ be a finite set of symbols called \emph{nonterminals} where each $H\in Q$ has a \emph{rank} some positive integer.
    We define an \emph{initiating rule} to be an expression of the form
	\[
		H(u_1,u_2, \ldots, u_k) \leftarrow
	\]
	where $H\in Q$ has rank $k$, and each $u_i\in \Sigma^*$. A \emph{propagating rule} is an expression of the form
	\[
		K(u_1, u_2,\ldots,u_m) \leftarrow H(x_1,x_2,\ldots,x_n)
	\]
	where
	\begin{itemize}
		\item $K\in Q$ has rank $m$,
		\item $H\in Q$ has rank $n$,
		\item $x_1, x_2,\ldots,x_n$ are abstract variables,
		\item each $u_i \in (\Sigma\cup\{x_1,x_2,\ldots,x_n\})^*$, and
		\item for each $i\in\{1,2,\ldots,n\}$, the word $u_1 u_2 \cdots u_m$ contains at most one instance of $x_i$.
	\end{itemize}
	A \emph{non-branching multiple context-free grammar} is a tuple $M = (\Sigma, Q, S, P)$ where $\Sigma$ is an alphabet, $Q$ is a finite set of nonterminals, $S\in Q$ is the \emph{starting nonterminal} which has rank 1, and $P$ is a finite set of rewrite rules each of which is either initiating or propagating.

	A word $w\in \Sigma^*$ is \emph{generated} by the grammar if there is a sequence of rules from $M$ starting with an initiating rule, then followed by propagating rules, and finishing at $S(w)$, that is, if there exists a sequence
  \[
		S(w)
		\leftarrow H_1(v_{1,1}, v_{1,2},\ldots, v_{1,m_1})
		\leftarrow H_2(v_{2,1}, v_{2,2},\ldots, v_{2,m_2})
		\leftarrow\cdots
		\leftarrow H_k(v_{k,1}, v_{k,2},\ldots, v_{k,m_k})
		\leftarrow
  \]
	where each $H_i \in Q$ with $H_k(v_{k,1}, v_{k,2},\ldots, v_{k,m_k}) \leftarrow$ initiating, each $v_{i,j}\in \Sigma^*$, and each replacement follows from $P$.
	We refer to the symbols $x_i$ used to define a propagating rule as the \emph{variables} of the rule.
  We refer to a sequence, as above, as a \emph{derivation} of the word $w$.
The language \emph{generated} by $M$ is the set of all words generated by $M$, that is, the set of all words which have a derivation as above.
\end{definition}

A non-branching multiple context-free grammar is \emph{non-permuting} if in each propagating replacement rule, the variables $x_i$ which appear in $u_1u_2\cdots u_m$, appear in the same order as on the right-hand side of the rule.
Moreover, a grammar is \emph{non-erasing} if for each propagating replacement rule, each variable $x_i$ which appears in the right-hand side also appears in the word $u_1u_2\cdots u_m$.
Notice then that if a non-branching multiple context-free grammar is both non-permuting and non-erasing, then we have
\[
	u_1u_2\cdots u_m \in \Sigma^* x_1 \Sigma^* x_2 \Sigma^* \cdots x_n \Sigma^*
\]
for each propagating rule.

As noted in the introduction, we call a non-permuting non-erasing non-branching multiple context-free grammar a \emph{restricted \MCF\ grammar}, abbreviated as \RMCFG.

\begin{proposition}\label{prop:equiv-langs}
	Let $L$ be EDT0L of finite index. Then $L$ is generated by an \RMCFG.
\end{proposition}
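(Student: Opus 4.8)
The plan is to take an EDT0L grammar $E = (\Sigma, V, I, H)$ of index $n$ generating $L$, and build an \RMCFG\ $M$ whose nonterminals record (bounded) information about a sentential form of $E$ and whose rules reverse the tables of $E$. The key idea is that since $E$ is of index $n$, any sentential form has at most $n$ nonterminals, so a sentential form $w$ can be encoded by: the ordered list of terminal-only factors $(u_0, u_1, \ldots, u_m)$ obtained by cutting $w$ at its nonterminal positions (with $m \leq n$), together with the sequence of nonterminals $(v_1, \ldots, v_m)$ sitting in the gaps. The nonterminal of $M$ will carry the data $(v_1, \ldots, v_m)$ (a bounded amount of information, since $m \leq n$ and $V$ is finite), and will have rank $m+1$, holding the terminal factors $(u_0, \ldots, u_m)$ as its arguments. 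The starting nonterminal $S$ of rank $1$ corresponds to the encoding of a word with no nonterminals, and initiating rules correspond to the initial sentential form $I$.

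First I would set up this correspondence formally: for each tuple $\bar{v} = (v_1, \ldots, v_m) \in V^{\leq n}$ introduce a nonterminal $H_{\bar v}$ of rank $m+1$, plus $S = H_{()}$ of rank $1$; the single initiating rule is $H_{(I)}(\varepsilon, \varepsilon) \leftarrow$. Next, for each table $h \in H$ and each source tuple $\bar v = (v_1, \ldots, v_m)$, I would analyse $v_i \cdot h$ for each $i$: write $v_i \cdot h = u_0^{(i)} a_1^{(i)} u_1^{(i)} \cdots a_{\ell_i}^{(i)} u_{\ell_i}^{(i)}$ with $u_j^{(i)} \in \Sigma^*$ and $a_j^{(i)} \in V$, so that after applying $h$ the sentential form's nonterminals, read left to right, are the concatenation $\bar v' = (a_1^{(1)}, \ldots, a_{\ell_1}^{(1)}, a_1^{(2)}, \ldots)$. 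If $|\bar v'| \leq n$, this yields a propagating rule $H_{\bar v}(\ldots) \leftarrow H_{\bar v'}(x_0, x_1, \ldots, x_{|\bar v'|})$ where the left-hand arguments interleave the fixed words $u_j^{(i)}$ with the variables $x_0, \ldots, x_{|\bar v'|}$ in their natural left-to-right order — which is exactly what makes the rule non-permuting and non-erasing (every variable appears exactly once, in order). If $|\bar v'| > n$ no rule is added (such a derivation branch is impossible in an index-$n$ grammar). One has to be slightly careful that the boundary terminal factors get merged correctly (the word $u_{\ell_i}^{(i)}$ before nonterminal $a_1^{(i+1)}$'s position and the factor already sitting between $v_i$ and $v_{i+1}$ in the source must be concatenated into a single argument), but this is bookkeeping. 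Finally I would verify soundness and completeness: a derivation $S(w) \leftarrow \cdots \leftarrow$ in $M$ corresponds bijectively to a sequence of table applications $I \cdot h_1 h_2 \cdots$ in $E$ yielding a sentential form that encodes $w$, and $w \in \Sigma^*$ exactly when the terminal sentential form is reached; hence $L(M) = L(E) = L$.

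The step I expect to be the main obstacle is handling the \emph{ordering and merging of terminal factors} precisely when several tables are applied in succession and nonterminals split into multiple new nonterminals or vanish (when $v_i \cdot h \in \Sigma^*$, the slot disappears and its terminal content merges with neighbours). Getting the rank of each $H_{\bar v}$ and the arity of each rule to match up, and confirming that the resulting left-hand side words lie in $\Sigma^* x_0 \Sigma^* x_1 \cdots x_{|\bar v'|} \Sigma^*$ (no permutation, no erasure), requires care with indices. A secondary subtlety is the index bound: one must argue that restricting to tuples $\bar v$ with $|\bar v| \leq n$ loses nothing, which follows because $E$ has index $n$ so every sentential form reachable from $I$ has at most $n$ nonterminals, hence every $\bar v'$ arising from a legitimate derivation already satisfies $|\bar v'| \leq n$. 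Everything else — finiteness of $Q$ and $P$, the shape of initiating versus propagating rules — is then immediate.
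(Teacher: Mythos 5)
Your construction is essentially the paper's own: nonterminals $H_{\bar v}$ of rank $|\bar v|+1$ indexed by the (at most $n$) nonterminals appearing in a sentential form, arguments holding the intervening terminal factors, $S=H_{()}$, the single initiating rule $H_{(I)}(\varepsilon,\varepsilon)\leftarrow$, and one propagating rule per table and source tuple obtained by splitting each $v_i\cdot h$ at its nonterminal occurrences; the observation that the interleaved left-to-right layout makes the rules automatically non-permuting and non-erasing is exactly the paper's argument, as is discarding the case $|\bar v'|>n$ using the index bound. The one thing you must correct is the orientation of the propagating rule. In \cref{def:nonbranching-mcf} a derivation starts with an initiating rule and finishes at $S(w)$, so a rule $K(\ldots)\leftarrow H(\ldots)$ produces a $K$-configuration \emph{from} an $H$-configuration; since your derivation must run forward from $H_{(I)}(\varepsilon,\varepsilon)$ to $H_{()}(w)$, the configuration obtained \emph{after} applying the table $h$ has to stand on the left of $\leftarrow$. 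As written, your rule $H_{\bar v}(\ldots)\leftarrow H_{\bar v'}(x_0,\ldots,x_{|\bar v'|})$ has the two sides swapped (your opening remark that the rules ``reverse the tables'' is the source of the slip); taken literally, no derivation could ever leave $H_{(I)}$ or reach $H_{()}$. The fix is mechanical: write $H_{\bar v'}(\text{interleaving of the }u_j^{(i)}\text{ and the }x_j)\leftarrow H_{\bar v}(x_0,\ldots,x_{|\bar v|})$, with the variables indexed by the rank of the source nonterminal $H_{\bar v}$; with that change the soundness and completeness argument goes through exactly as in the paper.
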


\begin{proof}
	Let $E = (\Sigma, V, I, H)$ be an EDT0L grammar of index $n$.
	In this proof, we construct an \RMCFG\ $M = (\Sigma,Q,S,P)$.

	For each sequence $v_1 v_2\cdots v_k \in V^*$ with length $k\leq n$, we introduce a nonterminal $H_{v_1v_2\cdots v_k}\in Q$ with rank $k+1$ to our non-branching multiple context-free grammar.
	Notice then that there are finitely many such nonterminals, in particular, $|Q| \leq (|V|+1)^n$.

	In our construction, a configuration of the form
	\[
		H_{v_1v_2\cdots v_k}(u_0, u_1,u_2,\ldots,u_k),
	\]
	where each $u_i\in \Sigma^*$, corresponds to a sentential form $u_0 v_1 u_1 v_2 u_2 \ldots v_k u_k$ generated by the EDT0L grammar $E$.
	Thus, the starting nonterminal of our non-branching multiple context-free grammar is $S = H_\varepsilon$.

	We begin our production by introducing an initiating rule of the form
	\[
		H_{I}(\varepsilon, \varepsilon) \leftarrow
	\]
	which corresponds to a word containing only the starting symbol $I$ of $E$.
	For each table $h \in H$ of our EDT0L grammar, we introduce a monoid endomorphism
	\[
		\overline{h}
		\colon (\Sigma\cup V\cup \{x_0,x_1,x_2,\ldots,x_n\})^*
		\to
		(\Sigma\cup V\cup \{x_0,x_1,x_2,\ldots,x_n\})^*
	\]
	such that $\overline{h}(u) = h(u)$ for each $u\in \Sigma\cup V$, and $\overline{h}(x_i) = x_i$ for each $i\in \{0,1,2,\ldots,n\}$.
	We note here that these additional disjoint symbols $x_0,x_1,\ldots,x_n$ will correspond to words in $\Sigma^*$.

	For each nonterminal $H_{v_1 v_2 \cdots v_k}\in Q$, we consider the word
	\[
		W_{h,H,v_1 v_2 \cdots v_k} \coloneqq \overline{h}(x_0 v_1 x_1 v_2 x_2 v_3 x_3 \cdots v_k x_k) \in (\Sigma\cup V\cup \{x_0,x_1,x_2,\ldots,x_k\})^*.
	\]
	Notice that $W_{h,H,v_1 v_2 \cdots v_k}$ contains exactly one instance of each symbol $x_0,x_2,\ldots,x_k$, and that these symbols appear in the same order in which they were given to the map $\overline{h}$.

	If the word $W_{h,H,v_1 v_2 \cdots v_k}$ contains at most $n$ instances of letters in $V$, then we may decompose it as
	\[
		W_{h,H,v_1 v_2 \cdots v_k}=w_0 v_1' w_1 v_2' w_2\cdots v_m' w_m
	\]
	where $m\leq n$, each $w_i\in (\Sigma\cup \{x_0,x_1,x_2,\ldots,x_k\})^*$, and each $v_i'\in V$.
	We introduce a propagating rule
	\[
		H_{v_1' v_2'\cdots v_m'}
		(w_0,w_1,\ldots,w_m)
		\leftarrow
		H_{v_1v_2\cdots v_k}(x_1,x_2,\ldots,x_k).
	\]
	Notice that this new rule is non-erasing and non-permuting from our earlier observations on $W_{h,H,v_1 v_2\cdots v_k}$.

	From our construction, we see that the non-branching multiple context-free language described in this proof is both non-permuting and non-erasing, so is an \RMCFG, and generates the finite-index EDT0L language $L(E)$.
\end{proof}

\begin{remark}
The converse of \cref{prop:equiv-langs} also holds, that is, every \RMCFG\ produces an EDT0L language of finite index.
In particular, given an \RMCFG\ one may construct an equivalent EDT0L language with nonterminals of the form $X_{H,k}$ where $H$ is a predicate of the \RMCFG\ and $k\in \{0,1,\ldots, r_H\}$ where $r_H$ is the rank of $H$.
One would then construct the EDT0L grammar such that productions of the \RMCFG\ of the form $H(u_1,u_2,\ldots,u_k)$ correspond to sentential forms $X_{H,0}\, u_1\, X_{H,1}\, u_2\, X_{H,2} \cdots u_k\, X_{H,k}$ of the EDT0L grammar.
We do not prove this result as it is not required by the results in this paper.
\end{remark}

We now describe three types of replacement rules for non-branching multiple context-free grammars as follows.
These types of rules will be used to describe a normal form for \RMCFG s.
We begin with insertions rules as follows.

\begin{definition}\label{def:rule-insertion}
	A replacement rule is a \emph{left} or \emph{right insertion} if it is of the form
	\[
		K(x_1,x_2,\ldots,x_{i-1}, \sigma x_{i}, x_{i+1},\ldots,x_k)
		\leftarrow
		H(x_1,x_2,\ldots,x_k)
	\]
	or
	\[
		K(x_1,x_2,\ldots,x_{i-1}, x_{i} \sigma, x_{i+1},\ldots,x_k)
		\leftarrow
		H(x_1,x_2,\ldots,x_k),
	\]
	respectively, for some $\sigma\in \Sigma$ and $i\in \{1,2,\ldots,k\}$.
\end{definition}

We then require a type of replacement rule which allows us to move the variables $x_i$ around.
We do so using merge rules defined as follows.

\begin{definition}\label{def:rule-merge}
	A replacement rule is a \emph{left} or \emph{right merge} if it is of the form
	\[
		K(x_1,x_2,\ldots,x_{i-1}, x_{i}x_{i+1}, \varepsilon, x_{i+2},\ldots,x_k)
		\leftarrow
		H(x_1,x_2,\ldots,x_k)
	\]
	or
	\[
		K(x_1,x_2,\ldots,x_{i-1}, \varepsilon, x_{i}x_{i+1}, x_{i+2},\ldots,x_k)
		\leftarrow
		H(x_1,x_2,\ldots,x_k),
	\]
	respectively, for some $i\in \{1,2,\ldots,k-1\}$.
\end{definition}

Further, we require a type of replacement rule which generate words as follows.

\begin{definition}\label{def:rule-accept}
	An \emph{accepting replacement rule} is one of the form
	\[
		S(x_1 x_2 \cdots x_k) \leftarrow H(x_1,x_2,\ldots, x_k)
	\]
	where $S$ is the starting nonterminal of the non-branching multiple context-free grammar.
\end{definition}

From these types of replacement rules as described in the previous definitions, we may now define normal forms for \RMCFG s as follows.

\begin{definition}\label{def:normal-form}
	An \RMCFG\ $M = (\Sigma,Q,S,P)$ is in \emph{normal form} if there exists some $k\geq 2$ such that every nonterminal $H\in Q$, except for the starting nonterminal $S$, has rank $k$, and each replacement rule in $P$ is either initiating of the form
	\[
		H(\underbrace{\varepsilon, \varepsilon,\varepsilon,\varepsilon, \ldots,\varepsilon}_{k\text{ components}})\leftarrow,
	\]
	a left or right insertion (as in \cref{def:rule-insertion}), a left or right merge (as in \cref{def:rule-merge}), or an accepting rule (as in \cref{def:rule-accept}).
	Moreover, every production of the grammar has the form
	\[
		S(w) \leftarrow H(w,\varepsilon,\varepsilon,\ldots, \varepsilon) \leftarrow\cdots \leftarrow K(\varepsilon,\varepsilon,\ldots,\varepsilon)\leftarrow
	\]
	where $w\in \Sigma^*$ and $H,K\in Q$.
\end{definition}

The use of the term `normal form' in the above definition is justified by the following lemma.

\begin{lemma}\label{lem:normal-form}
	Suppose that $L\subseteq \Sigma^*$ is the language of an \RMCFG.
	Then, $L$ can be generated by an \RMCFG\ in normal form.
\end{lemma}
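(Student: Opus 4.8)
The plan is to take an arbitrary \RMCFG\ $M = (\Sigma, Q, S, P)$ and simulate each of its rules by a bounded sequence of the primitive rules (initiating, insertion, merge, accepting) allowed in normal form, at the cost of introducing auxiliary nonterminals. First I would normalise the ranks: let $k-1$ be the maximum rank appearing among nonterminals of $M$ (taking $k$ large enough that $k\geq 2$), and replace every nonterminal $H$ of rank $r$ by a nonterminal $\widehat H$ of rank $k$, where the intended semantics is that $\widehat H(u_1,\dots,u_r,\varepsilon,\dots,\varepsilon)$ plays the role of $H(u_1,\dots,u_r)$ — i.e. the last $k-r$ components are kept empty. Each initiating rule $H(u_1,\dots,u_r)\leftarrow$ is then replaced by the normal-form initiating rule $\widehat H(\varepsilon,\dots,\varepsilon)\leftarrow$ followed by a sequence of left/right insertions that build up the constant words $u_i$ letter by letter into the $i$-th component (using fresh intermediate nonterminals of rank $k$ for each step). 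The starting nonterminal $S$ is kept with rank $1$, and each way $M$ derives $S(w)$ is routed through a final accepting rule $S(x_1x_2\cdots x_k)\leftarrow \widehat H(x_1,\dots,x_k)$ preceded by merges that collapse everything into the first component, so the overall production shape matches \cref{def:normal-form}.

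The core of the argument is simulating a general propagating rule. Since $M$ is an \RMCFG, every propagating rule has the form
\[
	K(w_0 x_{j_1} w_1 x_{j_2} w_2 \cdots x_{j_m} w_m,\; \dots) \leftarrow H(x_1,\dots,x_n)
\]
where the variables appearing on the left occur in increasing order of subscript (non-permuting) and each variable on the right appears at most once and, in fact, if it appears it appears exactly once — but crucially \emph{not every} $x_i$ need appear, because "non-erasing" in the paper's sense only forbids dropping a variable \emph{that shows up} — wait, re-reading the definition: non-erasing means each variable on the right-hand side also appears on the left, so in fact all of $x_1,\dots,x_n$ appear, exactly once each, in increasing order, with constant words interspersed. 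So the transformation needed is: (i) delete the constant words $w_i$ — but they are being \emph{created}, not deleted, so instead I insert them via insertion rules; (ii) reassociate the $x_i$'s into the correct components of $K$. Concretely, I would simulate the rule in three phases using fresh nonterminals of rank $k$: first, a phase of left/right insertions that places each constant letter of each $w_i$ adjacent to the appropriate variable block; second, a phase of left/right merges that concatenates adjacent variable-components into single components in the pattern dictated by the target tuple $(u_1,\dots,u_m)$ of $K$ (a merge turns $(\dots,x_ix_{i+1},\varepsilon,\dots)$ from $(\dots,x_i,x_{i+1},\dots)$, and iterating this lets one build any concatenation of consecutive components, padding the vacated slots with $\varepsilon$); third, relabelling to $\widehat K$. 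One must check that a merge only ever concatenates components in their left-to-right order, which is exactly what non-permuting guarantees, and that the number of components never exceeds $k$, which holds since $n, m \leq k-1$ and merges only decrease the count of nonempty components.

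The correctness proof is then a routine induction: by construction every derivation of the normal-form grammar $M'$ factors as blocks, each block being the simulation of one rule of $M$, so projecting a derivation of $M'$ down to the "milestone" configurations $\widehat H(\cdots)$ yields a derivation of $M$ producing the same word, and conversely any derivation of $M$ lifts block-by-block to $M'$; hence $L(M') = L(M) = L$. I expect the main obstacle to be purely bookkeeping: setting up the intermediate nonterminals and the exact sequence of insertions/merges so that at every intermediate step the component count stays $\leq k$ and the tuple has the shape required to apply the next primitive rule — in particular handling the empty components correctly, since merges force a component to become $\varepsilon$ and later insertions/merges must be scheduled around those empty slots. A clean way to manage this is to fix, for each propagating rule of $M$, an explicit "plan" describing the sequence of intermediate tuples, and verify once and for all that consecutive tuples in the plan differ by exactly one primitive rule; I would present this as a short lemma about manipulating tuples of words rather than inlining it into the main induction.
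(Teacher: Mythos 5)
Your proposal is correct and follows essentially the same route as the paper: pad every nonterminal to a common rank $k$ with trailing empty components, replace each initiating rule by an all-$\varepsilon$ initiating rule followed by insertions, decompose each propagating rule into a block of insertions and merges through fresh intermediate nonterminals, and finish with a single accepting rule, with correctness by a block-by-block correspondence of derivations. The one small wrinkle is your final ``relabelling to $\widehat K$'' step: a rule $\widehat K(x_1,\ldots,x_k)\leftarrow H(x_1,\ldots,x_k)$ is not among the permitted rule types of \cref{def:normal-form}, so these pure relabellings must either be avoided (by making the last insertion or merge of each block target $\widehat K$ directly) or eliminated afterwards by identifying the two nonterminals, which is exactly what the paper does in its final step.
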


\begin{proof}
	Let $M = (\Sigma,Q,S,P)$ be an \RMCFG.
	Let $k\in \mathbb N$ be the smallest value for which both $k\geq 2$ and for each replacement rule
	\[
		H(w_1,w_2,\ldots,w_\ell)\leftarrow
		\qquad
		\text{and}
		\qquad
		H(w_1,w_2,\ldots,w_n)\leftarrow K(x_1,x_2,\ldots,x_m)
	\]
	in $P$, we have $k\geq \max\{\ell,m,n\}$.
	Such a constant $k$ exists as there are finitely many replacement rules.

	In this proof, we construct three \RMCFG s $M'$, $M''$ and $M'''$.
	At the end of this proof, we have a grammar $M'''$ in normal form which generates the same language as $M$.

    The techniques used in this proof are quite straightforward.
    In step~1, we ensure that all nonterminals have the same rank by simply padding each predicate with empty components, as required;
    in step~2, we unfold each production of the grammar into a sequence of rules each of the form \cref{def:rule-accept,def:rule-insertion,def:rule-merge} or a trivial rule of the form
    \[
        H(x_1,x_2, \cdots, x_k) \leftarrow K(x_1,x_2,\ldots,x_k);
    \]
    then finally, in step~3, we remove these additional trivial rules to produce a grammar in normal form.

	\medskip

	\noindent
	\underline{Step 1}:
	We construct an \RMCFG, denoted as $M' = (\Sigma,Q',S', P')$, from $M$ as follows.

	For each nonterminal $A\in Q$ of the grammar $M$, we introduce the nonterminals
	\[
		A_0, A_1, A_2,\ldots,A_k\in Q'.
	\]
	In our construction of $M'$, we ensure that if
	\[
		A_i(w_1,w_2,\ldots,w_k)
	\]
	appears in a derivation, then $w_j=\varepsilon$ for each $j > i$. Thus, the subscript of these nonterminals count the number of non-empty components.
	Moreover, we introduce the nonterminals
	\[
		S', F' \in Q'
	\]
	to our grammar $M'$.
	These additional nonterminals are used in the grammar $M'$ as follows.

	The grammar $M'$ has the replacement rules
	\[
		S'(x_1 x_2 \ldots x_k) \leftarrow S_1(x_1,x_2,\ldots,x_k)
		\qquad
		\text{and}
		\qquad
		F'(\underbrace{\varepsilon, \varepsilon,\varepsilon,\varepsilon, \ldots,\varepsilon}_{k\text{ components.}})\leftarrow
	\]
	For each replacement rule
	\[
		H(w_1,w_2,\ldots,w_\ell)\leftarrow
	\]
	in $P$, we introduce a replacement rule
	\[
		H_\ell(w_1 x_1,w_2 x_2,\ldots,w_\ell x_\ell,x_{\ell+1},\ldots, x_k)\leftarrow F'(x_1,x_2,\ldots,x_k)\quad
	\]
	to $P'$.
	Moreover, for each replacement rule of the form
	\[
		H(w_1,w_2,\ldots,w_n)\leftarrow K(x_1,x_2,\ldots,x_m)
	\]
	in $P$, we introduce a replacement rule
	\[
		H_n(w_1,w_2,\ldots,(w_n x_{m+1}x_{m+2}\cdots x_k), \varepsilon,\varepsilon,\ldots,\varepsilon)
		\leftarrow
		K_m(x_1,x_2,\ldots,x_k)
	\]
	to $P'$. This completes our construction of $M'$.

	\medskip

	\noindent
	\textit{Properties of $M'$}:
	We see that $M'$ generates exactly the same language as $M$ and that all derivations in the grammar $M'$ have the form
	\[
		S'(w) \leftarrow S_1(w,\varepsilon,\varepsilon,\ldots, \varepsilon) \leftarrow\cdots \leftarrow F'(\varepsilon,\varepsilon,\ldots,\varepsilon)\leftarrow.
	\]
	The nonterminal $S'$ only appears as the leftmost nonterminal of a derivation, and each nonterminal $A \in Q'\setminus\{S'\}$ has rank $k$.

	\medskip

	\noindent
	\underline{Step 2}:
	We construct an \RMCFG, which we denote as $M'' = (\Sigma,Q'',S',P'')$, from $M'$ by unfolding each replacement of $M'$ into a sequence of more basic replacements as follows.

	The nonterminals of the grammar $M''$ contain the nonterminals of $M'$, that is, $Q'\subseteq Q''$.
	In this stage of the proof, we show how to decompose the replacement rules of $M'$ into sequences of finitely many replacement rules, each of which of the form as described in \cref{def:normal-form}, or of the form
	\begin{equation}\label{eq:useless-rule}
		H(x_1,x_2,\ldots,x_k)\leftarrow K(x_1,x_2,\ldots,x_k).
	\end{equation}
	In Step 3 of this proof, we complete our construction by removing the replacement rules of the form (\ref{eq:useless-rule}).

	Firstly, our grammar $M''$ contains the replacement rules
	\[
		S'(x_1x_2 \cdots x_k)\leftarrow S_1(x_1,x_2,\ldots,x_k)
		\qquad
		\text{and}
		\qquad
		F'(x_1,x_2,\ldots,x_k) \leftarrow
	\]
	which are both a part of the grammar $M'$.

	Suppose that the grammar $M'$ has a replacement rule $p \in P'$ of the form
	\[
		p\colon A(w_1,w_2,\ldots,w_k)\leftarrow B(x_1,x_2,\ldots,x_k)
	\]
	where each
	\[
		w_i = t_{i} \, z_{i,1}\, u[z_{i,1}]\, z_{i,2}\, u[z_{i,2}] \cdots z_{i,\ell_i}\, u[z_{i,\ell_i}]
	\]
	for some $\ell_i\in \mathbb{N}$, each $z_{i,j}\in \{x_1,x_2,\ldots,x_k\}$, $t_{i}\in \Sigma^*$ and each $u[x_i] \in \Sigma^*$.

	We then introduce a nonterminal of the form
	\[
		C_{p,n,m}\in Q''
	\]
	for each $n\in \{1,2,\ldots,k\}$ and each $m \in \{0,1,2,\ldots,|u(x_{n})|\}$.
	Let each
	\[
		u[x_n] = u_{n,1} u_{n,2}\cdots u_{n,|u[x_n]|}\in \Sigma^*.
	\]
	We then introduce replacement rules to $M''$ as follows.
	\begin{itemize}
		\item $C_{p,1,0}(x_1,x_2,\ldots,x_k) \leftarrow B(x_1,x_2,\ldots,x_k)$;
		\item for each $n\in \{1,2,\ldots,k\}$ and each $m \in \bigl\{0,1,2,\ldots,\bigl|u[x_{n}]\bigr|-1\bigr\}$, we have
		      \[C_{p,n,m+1}(x_1,x_2,\ldots,x_{n-1},(x_n u_{n,m+1}),x_{n+1},\ldots,x_k)\leftarrow C_{p,n,m}(x_1,x_2,\ldots,x_k);\]
		\item for each $n\in \{1,2,\ldots,k-1\}$, we have
		      \[
			      C_{p,n+1,0}(x_1,x_2,\ldots,x_k)\leftarrow C_{p,n,|u[x_n]|}(x_1,x_2,\ldots,x_k).
		      \]
	\end{itemize}
	Notice then that we have the following production
	\[
		C_{p,k,|u[x_k]|}(x_1 u[x_1],x_2 u[x_2],\ldots,x_k u[x_k])
		\leftarrow
		\cdots
		\leftarrow
		B(x_1,x_2,\ldots,x_k)
	\]
	in the grammar $M''$.

	We now introduce a finite number of nonterminals of the form
	\[
		D_{p,\vec v} \in Q''
	\]
	where $\vec v = (v_1,v_2,\ldots,v_k)\in (\Sigma^*)^k$ is a vector with each \[v_i\in \{x_1,x_2,\ldots, x_k\}^*\qquad\text{and}\qquad v_1v_2\cdots v_k = x_1 x_2\cdots x_k.\]
	For the grammar $M'$, we then introduce all replacements of the form
	\[
		D_{p,(x_1,x_2,\ldots,x_k)}(x_1,x_2,\ldots,x_k)
		\leftarrow C_{p,k,|u[x_k]|}(x_1,x_2,\ldots,x_k);
	\]
	\begin{multline*}
		D_{p,(v_1,v_2,\ldots, v_{i-1}, v_{i}v_{i+1}, \varepsilon, v_{i+2},\ldots,v_k)}
		(x_1,x_2,\ldots,x_{i-1},x_{i}x_{i+1},\varepsilon,x_{i+2},\ldots,x_k)
		\\
		\leftarrow
		D_{p,(v_1,v_2,\ldots,v_k)} (x_1,x_2,\ldots,x_k)
	\end{multline*}
	for each $\vec v = (v_1,v_2,\ldots,v_k)$ as before, and each $i\in \{1,2,\ldots,k-1\}$; and
	\begin{multline*}
		D_{p,(v_1,v_2,\ldots, v_{i-1}, \varepsilon, v_{i}v_{i+1}, v_{i+2},\ldots,v_k)}
		(x_1,x_2,\ldots,x_{i-1},\varepsilon, x_{i}x_{i+1},x_{i+2},\ldots,x_k)
		\\
		\leftarrow
		D_{p,(v_1,v_2,\ldots,v_k)} (x_1,x_2,\ldots,x_k)
	\end{multline*}
	for each $\vec v = (v_1,v_2,\ldots,v_k)$ as before, and each $i\in \{1,2,\ldots,k-1\}$.

	Notice then that
	\[
		D_{p,\vec v}
		(v_1,v_2,\ldots,v_k)
		\leftarrow^*
		C_{p,k, |u[x_k]|}(x_1,x_2,\ldots,x_k)
	\]
	for each $D_{p,\vec v}$ as above.

	Let
	\[
		\vec Z =
		(
		(z_{1,1}z_{1,2}\cdots z_{2,\ell_1}),\
		(z_{2,1}z_{2,2}\cdots z_{2,\ell_2}),\
		\ldots,\
		(z_{k,1}z_{k,2}\cdots z_{k,\ell_k})
		).
	\]
	We then see that in the grammar $M''$, we have
	\[
		D_{p,\vec Z}(w_1',w_2',\ldots,w_k')
		\leftarrow\cdots\leftarrow B(x_1,x_2,\ldots,x_k)
	\]
	where each
	\[
		w_i'
		=
		z_{i,1}\, u(z_{i,1})\, z_{i,2}\, u(z_{i,2}) \cdots z_{i,\ell_i}\, u(z_{i,\ell_i}),
	\]
	that is, $w_i = t_{i} w_i'$ where $t_i\in \Sigma^*$.
	We then introduce a nonterminal of the form
	\[
		G_{p,n,m}\in Q''
	\]
	for each $n\in \{1,2,\ldots,k\}$ and each $m \in \{0,1,2,\ldots,|t_i|\}$.
	Let each
	\[
		t_i = t_{i,1} t_{i,2}\cdots t_{i,|t_i|}\in \Sigma^*.
	\]
	We then introduce replacement rules to $M''$ as follows.
	\begin{itemize}
		\item $G_{p,1,|t_1|}(x_1,x_2,\ldots,x_k)\leftarrow D_{p,\vec Z}(x_1,x_2,\ldots,x_k)$;
		\item for each $n\in \{1,2,\ldots,k\}$ and each $m \in \{1,2,\ldots,|t_n|\}$, we have
		      \[
			      G_{p,n,m-1}(x_1,x_2,\ldots,x_{n-1}, (t_{n,m} x_n) ,x_{n+1},\ldots,x_k)
			      \leftarrow
			      G_{p,n,m}(x_1,x_2,\ldots,x_k);
		      \]
		\item for each $n\in \{1,2,\ldots,k-1\}$, we have
		      \[
			      G_{p,n+1,|t_{n+1}|}(x_1,x_2,\ldots,x_k)
			      \leftarrow
			      G_{p,n,0}(x_1,x_2,\ldots,x_k);\text{ and}
		      \]
		\item $A(x_1,x_2,\ldots,x_k)\leftarrow G_{p,k,0}(x_1,x_2,\ldots,x_k)$.
	\end{itemize}
	We then see that in the grammar $M''$ we now have the derivation
	\[
		A(w_1,w_2,\ldots,w_k)
		\leftarrow \cdots\leftarrow
		B(x_1,x_2,\ldots, x_k)
	\]
	where each rule in the sequence is either of the form as described in \cref{def:normal-form}, or the form as in~(\ref{eq:useless-rule}).

	\medskip

	\noindent
	\textit{Properties of $M''$}:
	Observe that the grammar $M''$ generates exactly the same language as $M'$, and thus as $M$.
	Moreover, the only thing preventing $M''$ from being in normal form is that it contains rules of the form given in (\ref{eq:useless-rule}) which we remove in the following step.

	\medskip

	\noindent
	\underline{Step 3}:
	\nopagebreak

	\smallskip
	\nopagebreak
	\noindent
	Suppose that $M''$ contains a replacement rule as in (\ref{eq:useless-rule}), that is, a replacement rule of the form
	\[
		p \colon H(x_1,x_2,\ldots,x_k)\leftarrow K(x_1,x_2,\ldots,x_k).
	\]
	Then, we modify $M''$ by
	\begin{itemize}
		\item removing the nonterminal $K$ from $Q''$;
		\item removing the replacement rule $p$ from $P''$; and
		\item replacing each instance of the nonterminal $K$ with $H$ in each rule contained in $P''$.
	\end{itemize}
	Notice that after this modification, the number of rules of the form (\ref{eq:useless-rule}) is reduced by one, and that the grammar generates the same language.
	Moreover, we notice by induction that we can remove all such rules.
	We refer to the resulting grammar as $M''' = (\Sigma,Q''',S',P''')$.

	\medskip

	\noindent
	\underline{Conclusion}:
	\nopagebreak

	\smallskip
	\noindent
	From the properties of the grammar $M''$ in step 2, and the procedure described in step 3, we see that we may generate a grammar $M'''$ in normal form for the language $L$.
\end{proof}

\section{Proof of the main theorem}\label{sec:main-theorem}

Our aim in this section is to prove \cref{thm:main}.
From \cref{lem:taking-a-submonoid-of-EDT0L}, it suffices to prove this theorem for a specific generating set.
Thus, we introduce the following language.

\begin{definition}\label{def:wp-for-Z}
	Let $\Sigma = \{a,b\}$, and let $\MonoidHom\colon \Sigma^* \to \mathbb{Z}$ to be the monoid homomorphism defined such that $\MonoidHom(a) = 1$ and $\MonoidHom(b) = -1$.
	 Let $L\subseteq \Sigma^*$ as
	\[
		L
		=
		\{
		w\in \Sigma^*
		\mid
		\MonoidHom(w) = 0
		\}
	\]
   represent the word problem for $\mathbb{Z}$ with respect to the generating set $X=\{1,-1\}$.
\end{definition}

The remainder of this section is organised as follows.
We begin by introducing the constant $C$ (in \cref{lem:derivation_bound}) and the \emph{counterexample word} $\mathcal W$ (see \cref{def:counterexample} in \cref{sec:main-proof/w}).
We note here that the word $\mathcal W$ belongs to the language $L$, as in \cref{def:wp-for-Z}, as is used to show that $L$ is not EDT0L by contradiction.
\Cref{sec:main-proof/constants,sec:main-proof/functions} introduce several constants and functions on words which are used throughout the proofs of technical lemmas in this section.
These constants correspond to the lengths of certain factors of $\mathcal W$.
Our motivation for introducing these constants and functions is so that we can define and study the properties of \emph{$n$-decompositions}.
In \cref{sec:main-proof/decomp}, we introduce the important concept of an \emph{$n$-decomposition} (in \cref{def:decomposition}), we then spend the rest of this subsection proving technical lemmas which are used later in this section.
In \cref{sec:main-proof/maintaining-decomp}, we prove three propositions (\cref{prop:decomp1,prop:decomp2,prop:decomp3}) which are used to show that, in some way, having a decomposition would be maintained during the production of $\mathcal W$ of an \RMCFG\ for $L$ (from this we obtain a contradiction).
These three propositions are summarised and generalised in \cref{prop:decomp-full} in \cref{sec:main-proof/mirror}.
In \cref{sec:main-proof/result}, we use \cref{prop:decomp-full} to prove \cref{thm:main}, then obtain \cref{thm:cormain} as a corollary to \cref{thm:main}.

In the remainder of this paper $L$, $\Sigma$, $\MonoidHom$, $a$ and $b$ shall refer to the values as in \cref{def:wp-for-Z}.
To simplify the proofs in this section, we extend $\MonoidHom\colon \Sigma^* \to \mathbb{Z}$ to the domain of all $k$-tuples as follows.

\begin{definition}\label{def:monoid-hom-tuple}
	Let $\vec w = (w_1,w_2,\ldots,w_k) \in (\Sigma^*)^k$ be a tuple of words, then
	$
		\MonoidHom(\vec w)
		\coloneqq
		\MonoidHom(w_1 w_2 \cdots w_k).
	$
\end{definition}

From \cref{prop:wp-edt0l-fi,prop:equiv-langs,lem:normal-form}, we see that in order to prove \cref{thm:main}, it is sufficient to show that $L$ cannot be generated by an \RMCFG\ in normal form.
We have the following lemma and corollary which place restrictions on the structure of such a grammar.

\begin{lemma}\label{lem:derivation_bound_basic}
	If $L$ is generated by some \RMCFG\ $M = (\Sigma,Q,S,P)$,
	then for each nonterminal $H\in Q$, there exists a constant $C_H \in \mathbb{Z}$ such that if
	\[
		S(w)\leftarrow\cdots\leftarrow H(u_1, u_2,\ldots,u_k) \leftarrow\cdots\leftarrow
	\]
	is a derivation in the grammar $M$, then $\MonoidHom(u_1 u_2 \cdots u_k) = C_H$.
\end{lemma}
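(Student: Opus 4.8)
The plan is to exploit a structural feature of \RMCFG s: every rule manipulates the tuple of payloads only by inserting fixed terminal strings and by regrouping components, and never inspects the concrete contents of a component. Consequently the integer $\MonoidHom(u_1\cdots u_k)$ of the concatenated payload changes by a fixed increment at each rule, and the total increment accumulated along a derivation is pinned down by the identity $\MonoidHom(w)=0$ that holds because $w\in L$ (recall $L$ and $\MonoidHom$ from \cref{def:wp-for-Z}).

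First I would record the arithmetic of a single rule. For an initiating rule $H(v_1,\ldots,v_m)\leftarrow$ with all $v_i\in\Sigma^*$, the value $\MonoidHom(v_1\cdots v_m)$ is a constant depending only on that rule. For a propagating rule $r\colon K(u_1,\ldots,u_m)\leftarrow H(x_1,\ldots,x_n)$ (this includes the rule producing $S$), non-permutation and non-erasure give $u_1u_2\cdots u_m = s_0\,x_1\,s_1\,x_2\cdots x_n\,s_n$ for fixed $s_i\in\Sigma^*$ (see the remark following \cref{def:nonbranching-mcf}); hence if $H$ is instantiated as $H(a_1,\ldots,a_n)$ and $r$ produces $K(b_1,\ldots,b_m)$, then $b_1\cdots b_m = s_0 a_1 s_1\cdots a_n s_n$, so $\MonoidHom(b_1\cdots b_m)=\MonoidHom(a_1\cdots a_n)+d_r$ where $d_r\coloneqq\MonoidHom(s_0 s_1\cdots s_n)\in\mathbb Z$. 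Applying this inductively along a derivation $S(w)\leftarrow\cdots\leftarrow H(u_1,\ldots,u_k)\leftarrow\cdots$, we get $\MonoidHom(w)=\MonoidHom(u_1\cdots u_k)+\sigma$, where $\sigma$ is the sum of the increments $d_r$ over the rules occurring on the segment from this occurrence of $H$ up to $S$. Since $w\in L$ we have $\MonoidHom(w)=0$, and therefore $\MonoidHom(u_1\cdots u_k)=-\sigma$.

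The remaining point — and the only one needing care — is that this value depends on $H$ alone, not on the derivation. Here I would splice derivations. Let $D$ be a derivation through an occurrence of $H$ with payload $(u_1,\ldots,u_k)$, and let $D'$ be another derivation through $H$, with payload $(u_1',\ldots,u_k')$ and with "upper segment" $Q'$ the list of rules from that occurrence of $H$ up to $S$. Because \RMCFG\ rules impose no condition on the concrete components, the rules of $Q'$ may be applied verbatim starting from $H(u_1,\ldots,u_k)$; prefixing this with the portion of $D$ running from its initiating rule up to the chosen occurrence of $H$ (which produces exactly $H(u_1,\ldots,u_k)$) yields a bona fide derivation $D''$ of a word $w''\in L$ whose upper segment is $Q'$. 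The previous paragraph applied to $D'$ gives $\sigma(Q')=-\MonoidHom(u_1'\cdots u_k')$, and applied to $D''$ gives $\MonoidHom(u_1\cdots u_k)=-\sigma(Q')=\MonoidHom(u_1'\cdots u_k')$. Hence all occurrences of $H$ across all derivations share one value of $\MonoidHom$; set $C_H$ equal to it (and $C_H\coloneqq 0$ if $H$ occurs in no derivation, making the assertion vacuous).

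I expect the well-definedness step of the last paragraph to be the main obstacle: it is the only place where one must argue beyond bookkeeping, and it rests squarely on \RMCFG\ derivations being \emph{payload-oblivious}, so that the lower part of one derivation through $H$ and the upper part of another can be recombined into a legal derivation. Everything else is the linear accounting of the increments $d_r$.
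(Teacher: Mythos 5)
Your proposal is correct and follows essentially the same route as the paper: the paper likewise observes that the upper segment $\alpha$ of a derivation acts payload-obliviously, sending $H(p_1,\ldots,p_k)$ to $S(m_0p_1m_1\cdots p_km_k)$ for fixed words $m_i$ (your $\sigma(Q')$ is exactly $\MonoidHom(m_0m_1\cdots m_k)$), and then splices the lower part of an arbitrary derivation through $H$ onto this fixed upper segment to conclude $\MonoidHom(u_1\cdots u_k)=-\MonoidHom(m_0\cdots m_k)$ from $\MonoidHom(w)=0$. The well-definedness step you flag is handled in the paper by precisely this splicing, so there is no gap.
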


For example, we have $C_S=0$ where $S$ is the start non-terminal of the grammar.

\begin{proof}
	Let some nonterminal $H\in Q$ be chosen and suppose that there exists at least one derivation of the form
	\begin{equation}\label{eq:lem:derivation_bound_basic}
		S(g)\leftarrow\cdots\leftarrow H(v_1, v_2,\ldots,v_k) \leftarrow\cdots\leftarrow
	\end{equation}
	where $g, v_1,v_2,\ldots,v_k\in \Sigma^*$.
	To simplify the explanations in this proof, we write $\beta\in P^*$ for the sequence of replacement rules which generate the configuration $H(v_1, v_2,\ldots,v_k)$, and $\alpha\in P^*$ for the sequence of replacement rules which generates $S(g)$ from $H(v_1, v_2,\ldots,v_k)$.
	We then write
	\[
		S(g)\leftarrow^\alpha H(v_1, v_2,\ldots,v_k) \leftarrow^\beta
	\]
	to denote the sequence in (\ref{eq:lem:derivation_bound_basic}).

	From the definition of an \RMCFG, we then see that there exist words $m_0, m_1, m_2,\ldots,m_k \in \Sigma^*$ such that
	\[
		S(m_0 p_1 m_1 p_2 m_2 \cdots p_k m_k)
		\leftarrow^\alpha
		H(p_1,p_2,\ldots,p_k)
	\]
	for each $p_1,p_2,\ldots,p_k \in \Sigma^*$.
	In particular, if $g$ and $v_i$ are as in (\ref{eq:lem:derivation_bound_basic}), then $g = m_0 v_1 m_1 v_2 m_2\cdots v_k m_k$.

	Suppose that $u_1,u_2,\ldots,u_k\in \Sigma^*$ are words such that there exists some derivation
	\[
		S(w)\leftarrow\cdots\leftarrow H(u_1, u_2,\ldots,u_k) \leftarrow\cdots\leftarrow
	\]
	in the grammar $M$, and write $\gamma\in P^*$ for the sequence of replacements which generates $H(u_1, u_2,\ldots,u_k)$.
	That is,
	\[
		H(u_1, u_2,\ldots,u_k) \leftarrow^\gamma.
	\]
	We then see that
	\[
		S(m_0 u_1 m_1 u_2 m_2 \cdots u_k m_k)
		\leftarrow^\alpha
		H(u_1, u_2,\ldots,u_k) \leftarrow^\gamma
	\]
	is a derivation in the grammar $L$ and thus
	\[
		m_0 u_1 m_1 u_2 m_2 \cdots u_k m_k\in L.
	\]
	Thus, from the definition of the language $L$, we see that
	\[
		\MonoidHom(m_0 u_1 m_1 u_2 m_2 \cdots u_k m_k) = 0.
	\]
	Since $\MonoidHom$ is a monoid homomorphism onto an abelian group, we then see that
	\[
		\MonoidHom(u_1u_2\cdots u_k)+\MonoidHom(m_0 m_1 \cdots m_k)=0.
	\]
	We thus conclude by setting $C_H = -\MonoidHom(m_0 m_1 \cdots m_k)$.
\end{proof}

From this lemma, we have the following immediate corollary.

\begin{corollary}\label{lem:derivation_bound}
	Let $L$ be generated by some \RMCFG\ $M = (\Sigma,Q,S,P)$.
	Then there exists a constant $C=C_M\geq 1$ such that, if
	\[
		S(w)\leftarrow\cdots\leftarrow H(u_1, u_2,\ldots,u_k) \leftarrow\cdots\leftarrow
	\]
	is a derivation in the grammar $M$, then $|\MonoidHom(u_1 u_2\cdots u_k)| \leq C$.
\end{corollary}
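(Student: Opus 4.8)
The plan is to observe that this corollary is an immediate consequence of \cref{lem:derivation_bound_basic} together with the finiteness of the nonterminal set $Q$. First I would invoke \cref{lem:derivation_bound_basic} to obtain, for every nonterminal $H\in Q$, a constant $C_H\in\mathbb{Z}$ with the property that whenever a configuration $H(u_1,u_2,\ldots,u_k)$ occurs in a derivation in $M$, we have $\MonoidHom(u_1u_2\cdots u_k)=C_H$. (For nonterminals that appear in no derivation at all, the constant $C_H$ may be chosen arbitrarily, e.g.\ $C_H=0$, since the condition is then vacuous.)

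Next I would set
\[
  C = C_M \coloneqq \max\Bigl(1,\ \max_{H\in Q}|C_H|\Bigr),
\]
which is well defined precisely because $Q$ is a finite set, and which satisfies $C\geq 1$ by construction. Given any derivation in $M$ of the form $S(w)\leftarrow\cdots\leftarrow H(u_1,u_2,\ldots,u_k)\leftarrow\cdots\leftarrow$, \cref{lem:derivation_bound_basic} gives $\MonoidHom(u_1u_2\cdots u_k)=C_H$, and hence $|\MonoidHom(u_1u_2\cdots u_k)|=|C_H|\leq C$, as required.

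There is essentially no obstacle here: the entire content was already extracted in the preceding lemma, and the only role of the corollary is to repackage the family of nonterminal-indexed constants $\{C_H\}_{H\in Q}$ into a single uniform bound, using that $Q$ is finite, and to arrange $C\geq 1$ so that the bound is stated in a convenient normalised form for later use in \cref{sec:main-theorem}.
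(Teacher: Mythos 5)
Your proof is correct and follows essentially the same route as the paper: take the constants $C_H$ from \cref{lem:derivation_bound_basic} and bound them uniformly using the finiteness of $Q$. Your extra care in taking the maximum with $1$ (to guarantee $C\geq 1$) and in handling nonterminals that occur in no derivation is a minor refinement the paper leaves implicit.
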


\begin{proof}
	For each nonterminal $H\in Q$, let $C_H \in \mathbb{Z}$ be the constant as in \cref{lem:derivation_bound_basic}.
	We then see that our result follows with the constant
	$
		C_M
		=
		\max\{|C_H|\mid H\in Q\}
	$
	which is 
    well-defined since $Q$ is finite.
\end{proof}

\subsection{Counterexample word}\label{sec:main-proof/w}
In the remainder of this section, suppose for contradiction that there exists an \RMCFG\ $M = (\Sigma,Q,S,P)$  in normal form recognising $L$.
Moreover, write $C$ for the constant 
in \cref{lem:derivation_bound}, and write $k$ for the  rank of the nonterminals in $M$ that are not the starting nonterminal.

We show that such a grammar $M$ cannot exist by constructing a word $\mathcal{W}\in L$ which cannot be generated by any derivation in $M$. 

\begin{definition}\label{def:counterexample}
	Let $m = 24k+2C+5$. We define the word $\mathcal W \in\Sigma^*$ as
	\[
		\mathcal W = a^{m^{2k}}\, \mathcal T_{2k}\, a^{m^{2k}}
	\]
	where $\mathcal T_{2k}$ is defined recursively such that
	\[
		\mathcal T_{0} = b^2
		\quad
		\text{and}
		\quad
		\mathcal T_{n} = a^{m^{n}} (\mathcal T_{n-1})^{2m} a^{m^{n}}
	\]
	for each $n\in \{ 1,2,3,\ldots,2k \}$.
\end{definition}

\begin{remark}
\cref{fig:placeholder} illustrates the word $\mathcal W$ for the values $m=4$, $k=2$ with the letter $a$ represented by
an up-step $(1, 1)$ and $b$ by a down-step $(1,-1)$.
Note that \cref{def:counterexample} requires $m = 24k+2C+5$ so $m=4$ is not a realistic value, but larger values of $m$ would make the figure difficult to draw, and one can already start to see the general behaviour with these values.
Here is a calculation of the word in this case. We have
\[
    \mathcal T_0=b^2,\quad
    \mathcal T_1=a^4 b^{16} a^4,\quad
    \mathcal T_2=a^{16} \bigl(a^4 b^{16} a^4\bigr)^8 a^{16},\quad
    \mathcal T_3=a^{64} \bigl(a^{16}\bigl(a^4 b^{16} a^4\bigr)^8 a^{16}\bigr)^8 a^{64},
\]\[
    \mathcal T_4=a^{256} \bigl(a^{64} \bigl( a^{16} \bigl(a^4 b^{16} a^4\bigr)^8 a^{16}\bigr)^8 a^{64}\bigr)^8 a^{256},
\]
so
\[\mathcal W=a^{512} \bigl( a^{64}\bigl(a^{16}\bigl(a^4 b^{16} a^4\bigr)^8 a^{16}\bigr)^8a^{64}\bigr)^8 a^{512}.\]
Notice that in the word $\mathcal W$, every factor of $b$ is of length exactly 16.
\end{remark}

In the remainder of this section, $m$ denotes the constant in \cref{def:counterexample}.
We then have the following observation of the words $\mathcal{W}$ and $\mathcal{T}_n$ from \cref{def:counterexample}.

\begin{lemma}\label{lem:m-hom-recursive-definition}
	For each $n\in \{0,1,2,3,\ldots,2k\}$, we have $\MonoidHom(\mathcal{T}_n) = -2 m^n$ with
	\[
		|\mathcal{T}_n|_a = 2(2 m)^n - 2m^n
		\quad\text{and}\quad
		|\mathcal{T}_n|_b = 2(2 m)^n.
	\]
	Moreover, we have $\MonoidHom(\mathcal W)=0$ and thus $\mathcal W \in L$.
\end{lemma}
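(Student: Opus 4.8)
The plan is to verify all four claims by induction on $n$, using the recursive definition $\mathcal T_n = a^{m^n}(\mathcal T_{n-1})^{2m} a^{m^n}$ together with the fact that $\MonoidHom$ is a monoid homomorphism onto $\mathbb Z$ (so it is additive under concatenation), $\MonoidHom(a) = 1$, and $\MonoidHom(b) = -1$. The base case $n = 0$ is immediate: $\mathcal T_0 = b^2$ gives $\MonoidHom(\mathcal T_0) = -2 = -2m^0$, $|\mathcal T_0|_a = 0 = 2(2m)^0 - 2m^0$, and $|\mathcal T_0|_b = 2 = 2(2m)^0$.

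For the inductive step, assume the formulas hold for $n-1$. Counting letters, $|\mathcal T_n|_a = 2m^n + 2m\cdot|\mathcal T_{n-1}|_a$ and $|\mathcal T_n|_b = 2m\cdot|\mathcal T_{n-1}|_b$; substituting the inductive hypotheses gives $|\mathcal T_n|_b = 2m\cdot 2(2m)^{n-1} = 2(2m)^n$ and $|\mathcal T_n|_a = 2m^n + 2m\bigl(2(2m)^{n-1} - 2m^{n-1}\bigr) = 2m^n + 2(2m)^n - 2m^n = 2(2m)^n - 2m^n$. Then $\MonoidHom(\mathcal T_n) = |\mathcal T_n|_a - |\mathcal T_n|_b = \bigl(2(2m)^n - 2m^n\bigr) - 2(2m)^n = -2m^n$; alternatively one can compute $\MonoidHom(\mathcal T_n) = 2m^n + 2m\MonoidHom(\mathcal T_{n-1}) = 2m^n + 2m(-2m^{n-1}) = 2m^n - 4m^n = -2m^n$, which agrees.

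Finally, for $\mathcal W = a^{m^{2k}}\mathcal T_{2k} a^{m^{2k}}$, additivity of $\MonoidHom$ gives $\MonoidHom(\mathcal W) = m^{2k} + \MonoidHom(\mathcal T_{2k}) + m^{2k} = 2m^{2k} - 2m^{2k} = 0$, so $\MonoidHom(\mathcal W) = 0$ and hence $\mathcal W \in L$ by \cref{def:wp-for-Z}. There is no real obstacle here — the argument is a routine induction; the only thing to be careful about is bookkeeping the two contributions $a^{m^n}$ at each end versus the $2m$-fold repetition of $\mathcal T_{n-1}$, and making sure the letter counts and the $\MonoidHom$-value are checked consistently (either compute $\MonoidHom(\mathcal T_n)$ directly from the recursion, or derive it from $|\mathcal T_n|_a - |\mathcal T_n|_b$).
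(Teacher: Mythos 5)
Your proof is correct and follows essentially the same route as the paper: both set up the recurrences $|\mathcal T_n|_b = 2m|\mathcal T_{n-1}|_b$ and $\MonoidHom(\mathcal T_n) = 2m\,\MonoidHom(\mathcal T_{n-1}) + 2m^n$ from the recursive definition and solve them, then conclude $\MonoidHom(\mathcal W)=0$ by additivity. (One trivial typo: in your computation of $|\mathcal T_n|_a$ the intermediate term should read $2m^n + 2(2m)^n - 4m^n$, since $2m\cdot 2m^{n-1} = 4m^n$; your final expression $2(2m)^n - 2m^n$ is nonetheless correct.)
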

\begin{proof}
	From the recursive definition of $\mathcal T_n$ in \cref{def:counterexample}, we have
	\[
		\MonoidHom(\mathcal{T}_0) = -2
		\quad\text{and}\quad
		\MonoidHom(\mathcal{T}_{n}) = 2m\,\MonoidHom(\mathcal{T}_{n-1}) + 2 \, m^{n}
	\]
	for each $n \geq 1$.
	From this recurrence, we find that
	\[
		\MonoidHom(\mathcal{T}_n)
		=
		-2\,m^n
	\]
	for each $n\geq 0$.
	Thus, $\MonoidHom(\mathcal{T}_{2k}) = -2m^{2k}$.
	Again from \cref{def:counterexample}, we then have
	\[
		\MonoidHom(\mathcal{W})
		=
		\MonoidHom(\mathcal{T}_{2k})
		+ 2\, m^{2k} = 0.
	\]
	From the recursive definition of $\mathcal T_n$, we also see that
	\[
		|\mathcal{T}_0|_b = 2
		\quad\text{and}\quad
		|\mathcal{T}_{n}|_b = 2m\,|\mathcal{T}_{n-1}|_b.
	\]
	Thus, we see that $|\mathcal{T}_n|_b = 2(2m)^n$ for each $n\in \{1,2,3,\ldots,2k\}$; and we find that
	\[
		|\mathcal{T}_n|_a = |\mathcal{T}_n|_b+\MonoidHom(\mathcal T_n) = 2(2m)^n - 2m^n
	\]
	as desired.
\end{proof}

\subsection{Constants}\label{sec:main-proof/constants}
We now introduce some constants which are used in the proofs below.

\begin{definition}\label{def:misc-constants}
	For $n \in \{1,2,3,\ldots,2k\}$, we define
	\[
		\Lambda_n = m^{2k+1-n}
		\qquad\text{and}\qquad
		B_n = \frac{m^{2k+2-n}-m}{m-1}.
	\]
	Moreover, for each $n\in \{1,2,3,\ldots,2k\}$, we define
	$
		\sigma_n = 2 \Lambda_{n} + 2 B_{n}.
	$
\end{definition}

The constants are chosen so that they satisfy the following relations.

\begin{lemma}\label{lem:bound1}
	For each $n\in \{1,2,3,\ldots,2k\}$, we have
	$12 \Lambda_n > 6 B_n \geq 6 \Lambda_{n} > \sigma_n>0$.
\end{lemma}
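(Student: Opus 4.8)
The plan is to reduce the whole chain of inequalities to two elementary facts about the geometric sum $B_n$. First I would rewrite $B_n$ in expanded form: writing $N \coloneqq 2k+1-n$, the finite geometric series formula gives
\[
	B_n = \frac{m^{2k+2-n}-m}{m-1} = m + m^2 + \cdots + m^{N} = \sum_{j=1}^{N} m^j,
\]
and since $n \leq 2k$ we have $N \geq 1$, so $\Lambda_n = m^{N}$ is exactly the top term of this sum. All the quantities in play are therefore positive, which already disposes of $\sigma_n > 0$.

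For the lower bounds, every summand in $\sum_{j=1}^{N}m^j$ is positive and $\Lambda_n = m^{N}$ is one of them, so $B_n \geq \Lambda_n$ (with equality precisely when $N=1$, i.e.\ $n=2k$); this yields $6B_n \geq 6\Lambda_n$, and also $\sigma_n = 2\Lambda_n + 2B_n \geq 4\Lambda_n > 0$. For the upper bounds I would show $B_n < 2\Lambda_n$: peeling off the top term,
\[
	B_n = \Lambda_n + \sum_{j=1}^{N-1} m^j = \Lambda_n + \frac{m^{N}-m}{m-1} < \Lambda_n + \frac{m^{N}}{m-1} \leq \tfrac{3}{2}\,\Lambda_n,
\]
where the last step uses $m-1 \geq 2$, which holds since $m = 24k+2C+5 \geq 3$ (recall $k \geq 2$ from the normal form and $C \geq 1$ from \cref{lem:derivation_bound}). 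In particular $2\Lambda_n > B_n$.

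It then remains to assemble the chain. The inequality $12\Lambda_n > 6B_n$ is $2\Lambda_n > B_n$ scaled by $6$; the inequality $6\Lambda_n > \sigma_n$ is, after subtracting $2\Lambda_n$ and halving, again equivalent to $2\Lambda_n > B_n$; the inequality $6B_n \geq 6\Lambda_n$ is $B_n \geq \Lambda_n$; and $\sigma_n > 0$ was noted above. There is no genuine obstacle here — this is a bookkeeping lemma — and the only point that needs a moment's care is ensuring the estimate $m-1 \geq 2$ is available, which it is by \cref{def:counterexample} together with $k \geq 2$ and $C \geq 1$.
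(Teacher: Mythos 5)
Your proposal is correct and follows essentially the same route as the paper's proof: expand $B_n$ as the geometric sum $m+m^2+\cdots+m^{2k+1-n}$ to get $B_n\geq\Lambda_n$, bound $B_n<2\Lambda_n$ using the size of $m$, and deduce $\sigma_n=2\Lambda_n+2B_n<6\Lambda_n$. The only cosmetic difference is that you prove the slightly sharper bound $B_n\leq\tfrac{3}{2}\Lambda_n$ via $m\geq 3$, whereas the paper uses only $m\geq 2$; both are amply justified since $m=24k+2C+5$.
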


\begin{proof}
	We observe that $\Lambda_n, B_n>0$ for each $n\in \{1,2,3,\ldots,2k\}$, and that
	\[
		B_n = \frac{m^{2k+2-n}-m}{m-1}
		=
		m + m^2 + \cdots + m^{2k+1-n}
		\qquad
		\text{and}
		\qquad
		\Lambda_n = m^{2k+1-n}.
	\]
	We thus see that $B_n \geq \Lambda_n$.
	Moreover, since $m\geq 2$, we see that $2\Lambda_n > B_n$.

	Then, since $\sigma_n = 2\Lambda_{n}+2 B_n$, we see that
	\[
		\sigma_n < 2\Lambda_n + 4\Lambda_n= 6\Lambda_n.
	\]
	Thus, we have our desired inequalities.
\end{proof}

\subsection{Functions on words and word weights}\label{sec:main-proof/functions}
In this section, we require a function to extract the $a^*$  factors  which appear as part of the prefix and suffix of a given word in $\Sigma^*$.
To accomplish this, we introduce the following function.

\begin{definition}\label{def:affix}
	We define $\Affix\colon \Sigma^* \times \{\leftSide,\rightSide\} \to \Sigma^*$ such that for each word $w\in \Sigma^*$, the word $\Affix(w,\leftSide)$ is the longest prefix of $w$ of the form $a^*$;
	and $\Affix(w,\rightSide)$ is the longest suffix of $w$ of the form $a^*$.
\end{definition}
For example, if $w=aaababbabbba$ then $\Affix(w,\leftSide)=aaa$ and $\Affix(w,\rightSide)=a$.
In the next lemma we compute $\Affix$ for some words of particular interest.

\begin{lemma}\label{lem:affix-of-T-recursive}
	For each $n\in \{1,2,3,\ldots,2k\}$ and each $s\in \{\leftSide, \rightSide\}$, we have
	\[
		|\Affix(\mathcal T_n, s)|
		=
		m + m^2 +\cdots + m^n = \frac{m^{n+1}-m}{m-1}=B_{2k+1-n}
	\]
	where $\mathcal{T}_n$ is as in \cref{def:counterexample}.
\end{lemma}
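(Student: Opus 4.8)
The plan is to prove the statement by induction on $n$, exploiting the recursive definition $\mathcal T_n = a^{m^n}(\mathcal T_{n-1})^{2m}a^{m^n}$ from \cref{def:counterexample}. First I would record the purely algebraic identity $m + m^2 + \cdots + m^n = \frac{m^{n+1}-m}{m-1} = B_{2k+1-n}$: the first equality is the sum of a geometric series, and the second follows by substituting $j = 2k+1-n$ into the expansion $B_j = m + m^2 + \cdots + m^{2k+1-j}$ already used in the proof of \cref{lem:bound1} (so that $2k+1-j = n$). Hence it suffices to show $|\Affix(\mathcal T_n,s)| = m + m^2 + \cdots + m^n$ for both $s\in\{\leftSide,\rightSide\}$.

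For the base case $n = 1$, I would simply compute $\mathcal T_1 = a^{m}(b^2)^{2m}a^{m} = a^m b^{4m} a^m$; since $m \geq 2$, the longest prefix (resp.\ suffix) of $\mathcal T_1$ of the form $a^*$ is exactly $a^m$, of length $m = m^1$, as required.

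For the inductive step with $n \geq 2$, assume the claim for $n-1$. I would first observe that $\mathcal T_{n-1}$ contains at least one occurrence of the letter $b$; this follows from \cref{lem:m-hom-recursive-definition}, which gives $|\mathcal T_{n-1}|_b = 2(2m)^{n-1} > 0$, or directly from the recursion. Consequently the maximal $a$-prefix $\Affix(\mathcal T_{n-1},\leftSide)$ is a \emph{proper} prefix of $\mathcal T_{n-1}$, and by maximality it is immediately followed by a $b$. Writing $\mathcal T_n = a^{m^n}\,\mathcal T_{n-1}\,(\mathcal T_{n-1})^{2m-1}a^{m^n}$, the longest prefix of $\mathcal T_n$ of the form $a^*$ is therefore $a^{m^n}$ followed by $\Affix(\mathcal T_{n-1},\leftSide)$, whose length, by the inductive hypothesis, is $m^n + (m + m^2 + \cdots + m^{n-1}) = m + m^2 + \cdots + m^n$. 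The argument for the suffix is symmetric: $\mathcal T_{n-1}$ contains a $b$, so its maximal $a$-suffix is preceded by a $b$, and the maximal $a$-suffix of $\mathcal T_n$ is $\Affix(\mathcal T_{n-1},\rightSide)$ followed by $a^{m^n}$. (Alternatively, one can first check by an easy induction that each $\mathcal T_n$ is a palindrome, which collapses the suffix case into the prefix case.)

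There is no serious obstacle here: the only point requiring care is the observation that the maximal $a$-prefix of $\mathcal T_{n-1}$ is proper — that is, that a $b$ genuinely occurs after it — since this is exactly what prevents the leading block $a^{m^n}$ of $\mathcal T_n$ from absorbing more than the initial $a$-block of the first copy of $\mathcal T_{n-1}$.
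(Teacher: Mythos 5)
Your proposal is correct and follows essentially the same route as the paper: the paper establishes the recurrence $|\Affix(\mathcal T_n,s)| = m^n + |\Affix(\mathcal T_{n-1},s)|$ (with base value $0$ at $\mathcal T_0=b^2$) directly from \cref{def:counterexample} and solves it, which is exactly your induction. Your version merely spells out the one point the paper leaves implicit, namely that $\mathcal T_{n-1}$ contains a $b$ so the leading block $a^{m^n}$ absorbs only the initial $a$-block of the first copy of $\mathcal T_{n-1}$.
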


\begin{proof}
	Let $s\in \{\leftSide, \rightSide\}$, then from \cref{def:counterexample}, we see that
	\[
		|\Affix(\mathcal T_0, s)| = 0
		\quad\text{and}\quad
		|\Affix(\mathcal T_{n}, s)| = m^{n} + \Affix(\mathcal T_{n-1}, s)
	\]
	for each $n \geq 1$.
	From this recurrence, it follows immediately that
	\[
		|\Affix(\mathcal T_n, s)|
		=
		m + m^2 +\cdots + m^n = \frac{m^{n+1}-m}{m-1}
	\]
	for each $n\in \{1,2,3,\ldots,2k\}$.
\end{proof}

From this, we then have the following result on the affixes of $\mathcal W$.

\begin{corollary}\label{cor:affix-of-w}
	We have $|\Affix(\mathcal W,s)| = B_1 + m^{2k}$ for each $s\in \{\leftSide,\rightSide\}$.
\end{corollary}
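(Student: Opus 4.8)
The plan is to read this off directly from \cref{lem:affix-of-T-recursive} together with \cref{def:counterexample}. By the mirror symmetry of the construction the two cases $s=\leftSide$ and $s=\rightSide$ are completely parallel (and in any case \cref{lem:affix-of-T-recursive} already records both), so I would write out the argument for $s=\leftSide$ and remark that $s=\rightSide$ is identical.

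First I would recall that $\mathcal W = a^{m^{2k}}\,\mathcal T_{2k}\,a^{m^{2k}}$, so the longest prefix of $\mathcal W$ of the form $a^*$ is the initial block $a^{m^{2k}}$ followed by the longest prefix of $\mathcal T_{2k}$ of the form $a^*$; that is, $\Affix(\mathcal W,\leftSide) = a^{m^{2k}}\,\Affix(\mathcal T_{2k},\leftSide)$, as long as $\mathcal T_{2k}$ is not itself a power of $a$. This proviso holds because unwinding the recursion in \cref{def:counterexample} (using $2k\geq 1$) shows that $\mathcal T_0 = b^2$ occurs as a factor of $\mathcal T_{2k}$, so $\mathcal T_{2k}$ contains at least one $b$. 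Applying \cref{lem:affix-of-T-recursive} with $n=2k$ then gives $|\Affix(\mathcal T_{2k},\leftSide)| = B_{2k+1-2k} = B_1$, and hence $|\Affix(\mathcal W,\leftSide)| = m^{2k} + B_1$, which is the claim.

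I do not anticipate any real obstacle here; the only point that requires a moment's attention is precisely the verification that the leading run of $a$'s in $\mathcal W$ does not extend all the way through $\mathcal T_{2k}$, i.e.\ that $\mathcal T_{2k}\notin a^*$, which is the observation that $\mathcal T_{2k}$ has a $b$ somewhere inside it.
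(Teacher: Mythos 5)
Your proof is correct and follows essentially the same route as the paper: decompose $\mathcal W = a^{m^{2k}}\,\mathcal T_{2k}\,a^{m^{2k}}$ and apply \cref{lem:affix-of-T-recursive} with $n=2k$. The only difference is that you explicitly verify $\mathcal T_{2k}\notin a^*$, a small point the paper leaves implicit.
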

\begin{proof}
	From the definition of $\mathcal W$ in \cref{def:counterexample}, we see that
	\[
		|\Affix(\mathcal W, s)| = |\Affix(\mathcal{T}_{2k},s)| + m^{2k}
	\]
	for each $s \in \{\leftSide,\rightSide\}$.
	Thus, from \cref{lem:affix-of-T-recursive}, we see that
	\[
		|\Affix(\mathcal W, s)| = B_1 + m^{2k}
	\]
	for each $s\in \{\leftSide,\rightSide\}$ as desired.
\end{proof}

We now define what it means for a word to be \emph{heavy} or \emph{light} as follows.
\begin{definition}\label{def:heavy}
	Let $n \in \{1,2,3,\ldots,2k\}$, we say that a word $u\in \Sigma^*$ is \emph{$n$-heavy} if it contains a factor of the form $a^{2\Lambda_{n}}$, otherwise we say that $u$ is \emph{$n$-light}.
	Given a tuple $\vec w = (w_1,w_2,\ldots,w_k)$, we define the set of indices $L_n(\vec w)\subseteq \{1,2,\ldots,k\}$ such that $i\in L_n(\vec w)$ if and only if $w_i$ is an $n$-light word.
\end{definition}

We then define the following function on $n$-heavy words.

\begin{definition}\label{def:segment}
	Let $u\in \Sigma^*$ be an $n$-heavy word, we then define
	\begin{itemize}
		\item $\Segment_n(u,\leftSide) = x$ to be the shortest prefix of $u$ such that $u = x a^{2\Lambda_{n}} u'$; and
		\item $\Segment_n(u,\rightSide) = x$ to be the shortest suffix of $u$ such that $u = u'a^{2\Lambda_{n}} x$.
	\end{itemize}
	In the above, it should be understood that $u' \in \Sigma^*$.
\end{definition}

We now define the functions $\Rem_n\colon \Sigma^*\to \Sigma^*$ as follows.

\begin{definition}\label{def:rem}
	For $n \in \{1,2,3,\ldots,2k\}$, we define $\Rem_n\colon \Sigma^*\to \Sigma^*$ such that
	\begin{itemize}
		\item if $u\in \Sigma^*$ is $n$-light, then $\Rem_n(u) = u$; otherwise
		\item if $u\in \Sigma^*$ is $n$-heavy, then $\Rem_n(u)=\Segment_n(u, \leftSide)\, \Segment(u,\rightSide)$.
	\end{itemize}
	Notice that in the second case, the word $u$ is $n$-heavy, and thus it is well-defined.
\end{definition}

We now define an additional function as follows.

\begin{definition}\label{def:strip}
	We define the function $\Strip\colon \Sigma^*\to\Sigma^*$ such that
	\[
		\Strip(u)
		\coloneqq
		\begin{cases}
			\varepsilon & \mathrm{if}\ u = \Affix(u,\leftSide) = \Affix(u,\rightSide),                           \\
			u'          & \mathrm{where}\ u=\Affix(u,\leftSide)\, u'\, \Affix(u,\rightSide)\ \mathrm{otherwise}.
		\end{cases}
	\]
	That is, $\Strip(u)$ is the word obtained by removing all leading and trailing instances of $a$ from $u$.
\end{definition}
For example, $\Strip(aaababbabbba)=babbabbb$.
We observe the following.

\begin{lemma}\label{lem:strip-bound}
	For each $n\in \{1,2,3,\ldots,2k\}$, we have
	\[
		\MonoidHom(\Strip(\mathcal{T}_n))
		=
		-2m^{n} - 2\frac{m^{n+1}-m}{m-1}.
	\]
	Moreover, for $n\in \{1,2,3,\ldots,2k\}$, let $w_n$ be the word
	\[
		w_n = \Affix(\mathcal T_{n}, \rightSide) \, \mathcal T_{n} \, \Affix(\mathcal T_{n}, \leftSide),
	\]
	then, for each factor $u$ of $w_n$, we have $|\MonoidHom(u)|\leq |\MonoidHom(\Strip(\mathcal T_n))|$.
\end{lemma}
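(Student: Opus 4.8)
The plan is to treat the two assertions separately: the identity for $\MonoidHom(\Strip(\mathcal T_n))$ is a direct computation, and the uniform bound on factors of $w_n$ is proved by induction on $n$ after extracting a recursion for $\Strip(\mathcal T_n)$.

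For the first part, note that $\mathcal T_n$ contains a $b$ (indeed $|\mathcal T_n|_b = 2(2m)^n>0$ by \cref{lem:m-hom-recursive-definition}), so $\mathcal T_n \neq \Affix(\mathcal T_n,\leftSide) = \Affix(\mathcal T_n,\rightSide)$ and the second clause of \cref{def:strip} applies, giving $\mathcal T_n = \Affix(\mathcal T_n,\leftSide)\,\Strip(\mathcal T_n)\,\Affix(\mathcal T_n,\rightSide)$. Each affix is a power of $a$ of length $\tfrac{m^{n+1}-m}{m-1}$ by \cref{lem:affix-of-T-recursive}, so its image under $\MonoidHom$ is $\tfrac{m^{n+1}-m}{m-1}$; combining this with $\MonoidHom(\mathcal T_n)=-2m^n$ from \cref{lem:m-hom-recursive-definition} gives $\MonoidHom(\Strip(\mathcal T_n)) = -2m^n - 2\tfrac{m^{n+1}-m}{m-1}$, as claimed. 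For the remainder abbreviate $S_n \coloneqq \Strip(\mathcal T_n)$, $\beta_n \coloneqq \tfrac{m^{n+1}-m}{m-1} = m+m^2+\cdots+m^n = |\Affix(\mathcal T_n,s)|$ (so $\beta_0=0$ and $\beta_n = \beta_{n-1}+m^n$), and $N_n \coloneqq |\MonoidHom(S_n)| = 2m^n + 2\beta_n$; observe $N_n = 4m^n + 2\beta_{n-1}$ and $N_{n-1}=2m^{n-1}+2\beta_{n-1}$, identities that make the bookkeeping below telescope.

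The key claim, proved by induction on $n\in\{0,1,\ldots,2k\}$, is: \emph{every prefix $p$ of $S_n$ satisfies $-N_n \leq \MonoidHom(p) \leq 0$}. The base case is $S_0 = b^2$, whose prefixes have $\MonoidHom$-values $0,-1,-2$, all in $[-2,0]=[-N_0,0]$. For the inductive step I unfold \cref{def:counterexample}: substituting $\mathcal T_{n-1} = a^{\beta_{n-1}}S_{n-1}a^{\beta_{n-1}}$ into $\mathcal T_n = a^{m^n}(\mathcal T_{n-1})^{2m}a^{m^n}$ and collapsing adjacent $a$-runs (using $m^n+\beta_{n-1}=\beta_n$, and that $S_{n-1}$ begins and ends with $b$ so these runs are maximal) gives $\mathcal T_n = a^{\beta_n}\,S_{n-1}(a^{2\beta_{n-1}}S_{n-1})^{2m-1}\,a^{\beta_n}$, hence $S_n = S_{n-1}(a^{2\beta_{n-1}}S_{n-1})^{2m-1}$. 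Any prefix of $S_n$ then has the form $S_{n-1}(a^{2\beta_{n-1}}S_{n-1})^{j}\,r$ with $0\leq j\leq 2m-1$ and $r$ either empty or a prefix of $a^{2\beta_{n-1}}S_{n-1}$ (with $j\leq 2m-2$ in the latter case). A short computation gives $\MonoidHom(S_{n-1}(a^{2\beta_{n-1}}S_{n-1})^{j}) = -N_{n-1}-2jm^{n-1}$, and the inductive hypothesis together with $2\beta_{n-1}-N_{n-1}=-2m^{n-1}$ gives $\MonoidHom(r)\in[-2m^{n-1},2\beta_{n-1}]$. Adding these ranges and using $j\leq 2m-1$ (resp.\ $j\leq 2m-2$) with the telescoping identity $-N_{n-1}-2(2m-1)m^{n-1}=-N_n$ shows the prefix value lies in $[-N_n,0]$; in particular $\MonoidHom(S_n)=-N_n$.

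Finally I deduce the lemma. Since $\Affix(\mathcal T_n,\leftSide)=\Affix(\mathcal T_n,\rightSide)=a^{\beta_n}$ and $\mathcal T_n = a^{\beta_n}S_na^{\beta_n}$, we have $w_n = a^{2\beta_n}S_na^{2\beta_n}$. Examining the three shapes of a prefix $p$ of $w_n$ — namely $p=a^{t}$, $p=a^{2\beta_n}q$ with $q$ a prefix of $S_n$, or $p=a^{2\beta_n}S_na^{t}$ — and using the claim together with $\MonoidHom(S_n)=-N_n$, one checks that $-2m^n\leq\MonoidHom(p)\leq 2\beta_n$ for every prefix $p$ of $w_n$. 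Every factor $u$ of $w_n$ satisfies $\MonoidHom(u)=\MonoidHom(p_2)-\MonoidHom(p_1)$ for a pair of prefixes $p_1,p_2$ of $w_n$, so $|\MonoidHom(u)| \leq 2\beta_n+2m^n = N_n = |\MonoidHom(\Strip(\mathcal T_n))|$, the desired bound. I expect the only real obstacle to be stating the recursion $S_n = S_{n-1}(a^{2\beta_{n-1}}S_{n-1})^{2m-1}$ precisely and tracking the constants $\beta_{n-1},N_{n-1},N_n$ so that the telescoping works out; everything else is routine.
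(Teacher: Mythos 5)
Your proof is correct and takes a genuinely different (and arguably cleaner) route than the paper's. The paper proves the factor bound by induction on $n$ working directly with factors of $w_n$: it splits into four cases according to whether a factor contains the whole power $(\mathcal T_{n-1})^{2m}$ or overlaps partial copies of $\mathcal T_{n-1}$ on one or both sides, and settles each case with a chain of explicit inequalities using $m\geq 2$ and $m\geq 3$. You instead prove the sharper statement that every prefix $p$ of $S_n=\Strip(\mathcal T_n)$ satisfies $-N_n\leq\MonoidHom(p)\leq 0$, via the recursion $S_n=S_{n-1}(a^{2\beta_{n-1}}S_{n-1})^{2m-1}$, and then recover the bound on an arbitrary factor as a difference of two prefix values. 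This buys a much lighter case analysis and makes the constants telescope exactly, at the modest cost of justifying the recursion for $S_n$ (which you do correctly, using that $S_{n-1}$ is nonempty and begins and ends with $b$ so the $a$-runs being merged are maximal). Your derivation of the first identity, reading $\MonoidHom(S_n)=\MonoidHom(\mathcal T_n)-2\beta_n$ off the factorisation $\mathcal T_n=a^{\beta_n}S_na^{\beta_n}$, is also simpler than the paper's recursive computation and equally valid.

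One small omission: your enumeration of the prefixes of $S_n$ as $S_{n-1}(a^{2\beta_{n-1}}S_{n-1})^{j}\,r$ presupposes that the prefix contains the entire leading copy of $S_{n-1}$; a prefix $p$ with $|p|<|S_{n-1}|$ is not of this form. That case is immediate from the inductive hypothesis, since then $p$ is a prefix of $S_{n-1}$ and $\MonoidHom(p)\in[-N_{n-1},0]\subseteq[-N_n,0]$, but it should be stated explicitly. With that case added, the argument is complete.
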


\begin{proof}
	From the recursive formula given in \cref{def:counterexample}, we see that
	\[
		\MonoidHom(\Strip(\mathcal{T}_n))
		=
		2m \MonoidHom(\mathcal{T}_{n-1}) - \MonoidHom(\Affix(\mathcal{T}_{n-1}, \leftSide)) - \MonoidHom(\Affix(\mathcal{T}_{n-1},\rightSide))
	\]
	for each $n\geq 1$.
	Thus, from \cref{lem:m-hom-recursive-definition,lem:affix-of-T-recursive}, we see that
	\begin{align*}
		\MonoidHom(\Strip(\mathcal{T}_n))
		 & =
		2m (-2m^{n-1})
		-
		2\left(\frac{m^{n}-m}{m-1}\right)
		\\&=
		-4m^{n} - 2\frac{m^{n}-m}{m-1}
		\\&=
		-2m^{n} - 2\frac{m^{n+1}-m}{m-1},
	\end{align*}
	as desired.

	Considering the factors $u$ of $w_1 = a^{2m} b^{4m} a^{2m} = \Affix(\mathcal T_1, \rightSide)\, \mathcal T_1\, \Affix(\mathcal T_1, \leftSide)$, we see that
	\[
		-4m = \MonoidHom(b^{4m}) \leq \MonoidHom(u) \leq \MonoidHom(a^{2m}) = 2m.
	\]
	Thus, $|\MonoidHom(u)|\leq 4m = |\Delta(\Strip(\mathcal{T}_1))|$.

	Suppose, for induction, that $|\MonoidHom(v)|\leq |\Delta(\Strip(\mathcal{T}_{n-1}))|$ for every factor $v$ of the word \[w_{n-1} = \Affix(\mathcal T_{n-1}, \rightSide) \, \mathcal{T}_{n-1} \, \Affix(\mathcal T_{n-1}, \leftSide)\] for some value $n \in \{2,3,\ldots,2k\}$.
	Let $u$ be a factor of \[w_n = \Affix(\mathcal T_{n}, \rightSide) \, \mathcal{T}_{n} \, \Affix(\mathcal T_{n}, \leftSide).\]
	Notice from \cref{lem:affix-of-T-recursive,lem:m-hom-recursive-definition} that
	\begin{align*}
		\Delta(w_n) & = \MonoidHom(\mathcal{T}_n) + \MonoidHom(\Affix(\mathcal T_n, \leftSide)) + \MonoidHom(\Affix(\mathcal T_n, \rightSide)) \\
		            & = -2m^n + 2\left(\frac{m^{n+1}-m}{m-1}\right)                                                                            \\
		            & = 2\frac{m^n-m}{m-1}.
	\end{align*}
	From \cref{def:counterexample,lem:affix-of-T-recursive}, we see
	\begin{equation}\label{eq:strip-ineq-eq1}
		w_{n}
		=
		a^{B_{2k+1-n}}\, \mathcal{T}_{n} \, a^{B_{2k+1-n}}
		=
		a^{m^n+B_{2k+1-n}} \, (\mathcal{T}_{n-1})^{2m} \, a^{m^n+B_{2k+1-n}}
	\end{equation}
	and thus, the factor $u$ falls into one of the following 4 cases:

	\medskip

	\noindent
	\underline{Case 1}: The factor $u$ has the form
	\[
		u = a^i (\mathcal{T}_{n-1})^{2m} a^j
	\]
	where $i,j \in \{0,1,2,\ldots,m^n + B_{2k+1-n}\}$.
	In this case, we see that
	\[
		-4m^n=
		2m \MonoidHom(\mathcal T_{n-1})=
		\MonoidHom((\mathcal{T}_{n-1})^{2m})
		\leq
		\MonoidHom(u)
		\leq
		\MonoidHom(w_n)
		= 2\frac{m^n-m}{m-1} \leq 2m^n.
	\]
	From this, we then see that
	$
		|\MonoidHom(u)|\leq 4m^n \leq |\Delta(\Strip(\mathcal{T}_n))|
	$
	as desired.

	\noindent
	\underline{Case 2}: The factor $u$ has the form
	\[
		u = a^i (\mathcal{T}_{n-1})^{j} v
	\]
	where $i\in \{0,1,2,\ldots,m^n+B_{2k+1-n}\}$, $j \in \{0,1,2,\ldots,2m-1\}$ and $v$ is a factor of $\mathcal{T}_{n-1}$.

	From our inductive hypothesis we see that
	\[
		(2m-1)\MonoidHom(\mathcal{T}_{n-1}) - |\Delta(\Strip(\mathcal{T}_{n-1}))|
		\leq
		\MonoidHom(u)
		\leq
		\left(m^n+B_{2k+1-n}\right) +|\Delta(\Strip(\mathcal{T}_{n-1}))|.
	\]
	From \cref{lem:m-hom-recursive-definition,lem:affix-of-T-recursive}, we see that
	\begin{align*}
		\MonoidHom(u)
		 & \geq
		(2m-1)\MonoidHom(\mathcal{T}_{n-1}) - |\Strip(\mathcal{T}_{n-1})|
		\\
		 & \qquad\qquad=
		-(2m-1)2m^{n-1} - 2m^{n-1} - 2\frac{m^{n}-m}{m-1}
		\\
		 & \qquad\qquad
		=
		-4m^{n} - 2\frac{m^{n}-m}{m-1}
		\\
		 & \qquad\qquad
		=
		-2m^{n} - 2\frac{m^{n+1}-m}{m-1}
		=
		-|\Delta(\Strip(\mathcal{T}_n))|
	\end{align*}
	and
	\begin{align*}
		\MonoidHom(u)
		 & \leq\left(m^n+B_{2k+1-n}\right) + |\Delta(\Strip(\mathcal{T}_{n-1}))|
		\\&\qquad\qquad=
		\left(m^n+\frac{m^{n+1}-m}{m-1}\right) + \left(2m^{n-1} + 2\frac{m^{n}-m}{m-1}\right)
		\\&\qquad\qquad=
		\left(2m^n+\frac{m^{n}-m}{m-1}\right) + \left(2m^{n-1} + 2\frac{m^{n}-m}{m-1}\right)
		\\&\qquad\qquad\leq
		\left(2m^n+2m^{n-1}\right) + \left(2m^{n-1} + 2\frac{m^{n}-m}{m-1}\right)
		\\&\qquad\qquad\leq
		2m^n + 4m^{n-1} + 2\frac{m^{n}-m}{m-1}
		\\&\qquad\qquad\leq
		2m^n + 2m^{n} + 2\frac{m^{n}-m}{m-1}
		=
		2m^n + 2\frac{m^{n+1}-m}{m-1}= |\Delta(\Strip(\mathcal{T}_n))|
	\end{align*}
	where some of the above inequalities hold since $m\geq 2$.

	From this, we then see that
	$
		|\MonoidHom(u)| \leq |\Delta(\Strip(\mathcal{T}_n))|
	$
	as desired.

	\noindent
	\underline{Case 3}: The factor $u$ has the form
	\[
		u = v (\mathcal{T}_{n-1})^{j} a^i
	\]
	where $i\in \{0,1,2,\ldots,m^n+B_{2k+1-n}\}$, $j \in \{0,1,2,\ldots,2m-1\}$ and $v$ is a factor of $\mathcal{T}_{n-1}$.

	This case is symmetric to the previous case. In particular, we see that
	$
		|\MonoidHom(u)| \leq |\Delta(\Strip(\mathcal{T}_n))|
	$
	as desired.

	\noindent
	\underline{Case 4}: The factor $u$ has the form
	\[
		u = \alpha (\mathcal{T}_{n-1})^{i} \beta
	\]
	where $i\in \{0,1,2,\ldots,2m-2\}$, and $\alpha$ and $\beta$ are factors of $\mathcal T_{n-1}$.

	From our inductive hypothesis, we then see that
	\[
		-2|\Delta(\Strip(\mathcal T_{n-1}))| + (2m-2)\MonoidHom(\mathcal T_n)
		\leq
		\MonoidHom(u)
		\leq
		2|\Delta(\Strip(\mathcal T_{n-1}))|.
	\]
	Thus, from \cref{lem:m-hom-recursive-definition} and the fact that $m\geq 2$, we then see that
	\begin{align*}
		|\MonoidHom(u)|
		 & \leq
		2\left( 2m^{n-1} + 2\frac{m^n-m}{m-1} \right) + (2m-2)m^{n-1}
		\\&\qquad\qquad=
		2m^n + 2m^{n-1} + 4\frac{m^n-m}{m-1}
		\\&\qquad\qquad\leq
		2m^n + 2\frac{m^{n+1}-m}{m-1}
		=|\Delta(\Strip(\mathcal T_n))|
	\end{align*}
	where the last inequality holds since $m > 3$, in particular,
	\begin{align*}
		2m^n + 2m^{n-1} + 4\frac{m^n-m}{m-1}
		\leq
		2m^n + 2\frac{m^{n+1}-m}{m-1}
		 & \Longleftarrow
		m^{n-1} + 2\frac{m^n-m}{m-1}
		\leq
		\frac{m^{n+1}-m}{m-1}
		\\
		 & \Longleftarrow
		m^{n-1}(m-1) + 2(m^n-m)
		\leq
		m^{n+1}-m
		\\
		 & \Longleftarrow
		m^{n+1} -3m^n + m^{n-1}+m \geq 0
		\\
		 & \Longleftarrow
		m^{n+1} -3m^n \geq 0
		\\
		 & \Longleftarrow
		m^{n}(m- 3) \geq 0
		\Longleftarrow
		m\geq 3.
	\end{align*}
	Thus, $|\MonoidHom(u)| \leq |\Delta(\Strip(\mathcal T_n))|$ as desired.

	\noindent
	\underline{Conclusion}:
	\nopagebreak

	\smallskip
	\noindent
	In all cases, we have our desired bound on $|\MonoidHom(u)|$.
\end{proof}

We then have the following lemma which we use to characterise the $n$-light factors of $\mathcal{W}$.

\begin{lemma}\label{lem:bound-on-small-subwords}
	If $u$ is a factor of $\mathcal{W}$ that does not contain $a^{2B_{n}}$ as a factor, with $n\in \{1,2,3,\ldots,2k\}$, then $|\MonoidHom(u)|\leq \sigma_n$.
\end{lemma}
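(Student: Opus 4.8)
The plan is to analyze the structure of $\mathcal{W}$ as a nested concatenation of $a$-blocks and copies of the $\mathcal{T}_j$'s, and to show that any factor avoiding $a^{2B_n}$ can only ``see'' a bounded portion of this structure. Recall from \cref{def:counterexample} that $\mathcal{W} = a^{m^{2k}}\mathcal{T}_{2k}a^{m^{2k}}$ and $\mathcal{T}_j = a^{m^j}(\mathcal{T}_{j-1})^{2m}a^{m^j}$, so unfolding the recursion exhibits $\mathcal{W}$ as a word over the alphabet $\{a,b\}$ in which every maximal $a$-block that arises ``at level $j$'' has length of the form $m^j$, $2m^j$, or a sum of such coming from adjacent blocks across a $\mathcal{T}_{j-1}$-boundary. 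The key observation I would establish first: in $\mathcal{W}$, a maximal run of $a$'s either lies strictly inside some copy of $\mathcal{T}_{n-1}$ (and hence has length $< 2B_n$ by a length estimate, using $B_{2k+1-(n-1)} = B_{2k+2-n}$ and \cref{lem:affix-of-T-recursive} together with the definition of $B_n$), or it is one of the ``level-$\geq n$'' blocks bordering a $\mathcal{T}_{n-1}$-group, in which case it has length $\geq 2\Lambda_n$ — in fact length $\geq m^n \geq 2\Lambda_n$ since $\Lambda_n = m^{2k+1-n}$ and... wait, this needs care, since $\Lambda_n$ is large. Let me restate: I would pin down that a maximal $a$-run in $\mathcal{W}$ has length $\geq 2B_n$ precisely when it is a level-$j$ border block with $j \geq $ (the appropriate threshold), and otherwise has length $< 2B_n$.

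Granting that dichotomy, here is the main argument. Suppose $u$ is a factor of $\mathcal{W}$ containing no $a^{2B_n}$. Then every maximal $a$-run of $u$ that is also a maximal $a$-run of $\mathcal{W}$ is ``short'' in the above sense, so $u$ cannot straddle any of the large level-$\geq n$ $a$-blocks in their entirety; consequently $u$ is contained in a word of the form $a^{i}(\mathcal{T}_{n-1})^{\ell}a^{j}$ sitting between two consecutive large blocks, for some bounded $\ell$ and bounded $i,j$. More precisely, because $u$ avoids $a^{2B_n}$, each of $i$ and $j$ is at most $2B_n$, and $\ell \leq 2m$ (the number of $\mathcal{T}_{n-1}$-copies between two level-$n$ border blocks). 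Actually I would want $u$ to be a factor of $a^{i}\mathcal{T}_{n-1}a^{j}$ with $i, j < 2B_n$: this is where I would use that consecutive large $a$-blocks of length $\geq 2B_n$ occur with at most one intervening $\mathcal{T}_{n-1}$, which follows by examining the unfolding at level $n$. Then $u$ is a factor of $w' := a^{2B_n}\mathcal{T}_{n-1}a^{2B_n}$, and I would compare this with the word $w_{n-1} = \Affix(\mathcal{T}_{n-1},\rightSide)\,\mathcal{T}_{n-1}\,\Affix(\mathcal{T}_{n-1},\leftSide)$ from \cref{lem:strip-bound}. Since $\Affix(\mathcal{T}_{n-1}, s)$ has length $B_{2k+2-n}$ which exceeds... no — I need the $a$-padding of $w'$ to dominate. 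Here $2B_n \geq B_{2k+2-n}$? That is exactly the issue: $B_n$ versus $|\Affix(\mathcal{T}_{n-1},s)|$. We have $|\Affix(\mathcal{T}_n,s)| = B_{2k+1-n}$, so $|\Affix(\mathcal{T}_{n-1},s)| = B_{2k+2-n}$, while $B_n$ as indexed here is a genuinely different quantity; I would need the bound $|\MonoidHom(u)| \leq \sigma_n = 2\Lambda_n + 2B_n$ to come out with room to spare, and the cleanest route is: $|\MonoidHom(u)| \leq 2\cdot 2B_n + |\MonoidHom(\Strip(\mathcal{T}_{n-1}))|$ — the $2B_n$ accounting for each $a$-prefix/suffix — and then bound $|\MonoidHom(\Strip(\mathcal{T}_{n-1}))| = 2m^{n-1} + 2\frac{m^n - m}{m-1}$ (from \cref{lem:strip-bound}) against $2\Lambda_n$. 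Hmm, $\Lambda_n = m^{2k+1-n}$ is huge compared to $m^{n-1}$ when $k$ is large, so this comparison is comfortable, and combined with a finer use of \cref{lem:strip-bound}'s second assertion applied to factors of $w_{n-1}$, I would conclude $|\MonoidHom(u)| \leq 2\Lambda_n + 2B_n = \sigma_n$.

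The main obstacle I anticipate is the combinatorial bookkeeping in the structural dichotomy — precisely identifying, for each index $n$, which maximal $a$-runs of $\mathcal{W}$ have length $\geq 2B_n$ and verifying that between any two such runs there is at most one full copy of $\mathcal{T}_{n-1}$ (so that an $a^{2B_n}$-avoiding factor is confined to $a^i \mathcal{T}_{n-1} a^j$ with small $i,j$). This requires carefully unfolding the recursion $\mathcal{T}_j = a^{m^j}(\mathcal{T}_{j-1})^{2m}a^{m^j}$ one level at a time and tracking how border $a$-blocks of adjacent $\mathcal{T}_{j-1}$-factors merge into runs of length $2m^j$, plus the outer padding. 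The arithmetic at the end — checking $2\cdot(2B_n) + |\MonoidHom(\Strip(\mathcal{T}_{n-1}))| \leq \sigma_n$, i.e.\ essentially $|\MonoidHom(\Strip(\mathcal{T}_{n-1}))| \leq 2\Lambda_n - 2B_n$, or a sharper inequality if the constant $2B_n$ above is wasteful — is routine given $m\geq 2$ and the explicit formulas for $B_n$, $\Lambda_n$ in \cref{def:misc-constants}, and I would lean on \cref{lem:bound1} to absorb lower-order terms.
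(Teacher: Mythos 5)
Your overall strategy --- confine $u$ to a single copy of some $\mathcal{T}_j$ together with bounded $a$-padding on each side, then invoke the factor bound of \cref{lem:strip-bound} --- is the same as the paper's, but you have chosen the wrong level $j$ of the recursion, and this is fatal rather than cosmetic. The indices on $B_n,\Lambda_n,\sigma_n$ run opposite to the subscript of $\mathcal{T}$: by \cref{lem:affix-of-T-recursive}, $a^{B_n}$ is the affix of $\mathcal{T}_{2k+1-n}$, not of $\mathcal{T}_{n-1}$. The paper unfolds $\mathcal{W}\in a^*\mathcal{T}_{2k+1-n}a^*\cdots a^*\mathcal{T}_{2k+1-n}a^*$; every maximal $a$-run separating two consecutive copies of $\mathcal{T}_{2k+1-n}$ then has length at least $2B_n$ (two abutting affixes), so a factor avoiding $a^{2B_n}$ sits inside $w=\Affix(\mathcal{T}_{2k+1-n},\rightSide)\,\mathcal{T}_{2k+1-n}\,\Affix(\mathcal{T}_{2k+1-n},\leftSide)=a^{2B_n}\,\Strip(\mathcal{T}_{2k+1-n})\,a^{2B_n}$, and the second assertion of \cref{lem:strip-bound} gives $|\MonoidHom(u)|\le|\MonoidHom(\Strip(\mathcal{T}_{2k+1-n}))|=2\Lambda_n+2B_n=\sigma_n$ exactly --- no slack is needed or available. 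With your choice of $\mathcal{T}_{n-1}$, the confinement step already fails: the junction between consecutive copies of $\mathcal{T}_{n-1}$ is an $a$-run of length $2B_{2k+2-n}$, which is smaller than $2B_n$ whenever $n\le k$ (since $B_j$ is decreasing in $j$), so a factor avoiding $a^{2B_n}$ can straddle arbitrarily many copies of $\mathcal{T}_{n-1}$.

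Your closing arithmetic confirms the mismatch. You would need $2\cdot 2B_n+|\MonoidHom(\Strip(\mathcal{T}_{n-1}))|\le\sigma_n=2\Lambda_n+2B_n$, i.e.\ $|\MonoidHom(\Strip(\mathcal{T}_{n-1}))|\le 2\Lambda_n-2B_n$; but \cref{lem:bound1} gives $B_n\ge\Lambda_n$, so the right-hand side is nonpositive and the inequality is false. The heuristic that ``$\Lambda_n=m^{2k+1-n}$ is huge compared to $m^{n-1}$'' is also not uniformly valid: $k$ is fixed and $n$ ranges up to $2k$, so for $n$ near $2k$ one has $\Lambda_n=m$ while $m^{n-1}=m^{2k-1}$. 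Replacing $\mathcal{T}_{n-1}$ by $\mathcal{T}_{2k+1-n}$ throughout repairs both the confinement and the arithmetic and recovers the paper's proof.
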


\begin{proof}
	For each $n\in \{1,2,3,\ldots,2k\}$, notice from the definition of $\mathcal{W}$ that
	\[
		\mathcal{W} \in a^* \mathcal{T}_{2k+1-n} a^* \mathcal{T}_{2k+1-n} a^* \mathcal{T}_{2k+1-n} a^*\cdots a^* \mathcal{T}_{2k+1-n} a^*.
	\]
	From \cref{lem:affix-of-T-recursive}, we see that
	\[
		\Affix(\mathcal{T}_{2k+1-n},\leftSide) = \Affix(\mathcal{T}_{2k+1-n},\rightSide) = a^{B_n}.
	\]
	From this we see that if $u$ does not contain $a^{2B_n}$ as a factor,
  then it must also be a factor of
	\[
		w=\Affix(\mathcal T_{2k+1-n}, \rightSide) \, \mathcal T_{2k+1-n} \, \Affix(\mathcal T_{2k+1-n}, \leftSide).
	\]
	Thus, from \cref{lem:strip-bound}, we see that
	\[
		|\MonoidHom(u)|
		\leq
		2m^{2k+1-n} + 2\frac{m^{2k+2-n}-m}{m-1}
		=
		\sigma_n,
	\]
	as desired.
\end{proof}

\begin{corollary}\label{cor:bound-on-rem}
	If $u$ is a factor of $\mathcal{W}$, then
	\[|\MonoidHom(\Rem_n(u))|\leq 2\sigma_n\]
	for all $n\in \{1,2,3,\ldots,2k\}$.
	If additionally $u$ is $n$-light, then $|\MonoidHom(\Rem_n(u))|\leq \sigma_n$.
\end{corollary}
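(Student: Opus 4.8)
The plan is to treat separately the two cases in \cref{def:rem}, according to whether $u$ is $n$-light or $n$-heavy, and in each case to reduce to \cref{lem:bound-on-small-subwords}. The elementary but crucial observation used throughout is that $2\Lambda_n \le 2B_n$: indeed $B_n \ge \Lambda_n$ is one of the inequalities packaged in \cref{lem:bound1}, so any word avoiding $a^{2\Lambda_n}$ as a factor also avoids $a^{2B_n}$ as a factor, and hence \cref{lem:bound-on-small-subwords} applies to it.

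First I would dispatch the $n$-light case, which simultaneously yields the sharper bound claimed in the second sentence of the corollary. If $u$ is $n$-light then $\Rem_n(u) = u$ by \cref{def:rem}, and $u$ contains no factor $a^{2\Lambda_n}$ by \cref{def:heavy}, hence no factor $a^{2B_n}$. Since $u$ is a factor of $\mathcal{W}$, \cref{lem:bound-on-small-subwords} gives $|\MonoidHom(\Rem_n(u))| = |\MonoidHom(u)| \le \sigma_n \le 2\sigma_n$, as required.

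Now suppose $u$ is $n$-heavy, and write $\ell = \Segment_n(u,\leftSide)$ and $r = \Segment_n(u,\rightSide)$, so that $\Rem_n(u) = \ell r$ by \cref{def:rem}. The words $\ell$ and $r$ are, respectively, a prefix and a suffix of $u$, hence factors of $\mathcal{W}$. The key point is that neither $\ell$ nor $r$ contains $a^{2\Lambda_n}$ as a factor. Indeed, by \cref{def:segment} the word $\ell$ is the shortest prefix of $u$ with $u \in \ell\, a^{2\Lambda_n}\, \Sigma^*$; if $\ell$ contained an occurrence of the factor $a^{2\Lambda_n}$, then writing $\ell = p\, a^{2\Lambda_n}\, q$ for its first such occurrence would give $u \in p\, a^{2\Lambda_n}\, \Sigma^*$ with $|p| < |\ell|$, contradicting minimality; the argument for $r$ is symmetric, using the last occurrence. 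Consequently neither $\ell$ nor $r$ contains $a^{2B_n}$, so \cref{lem:bound-on-small-subwords} gives $|\MonoidHom(\ell)| \le \sigma_n$ and $|\MonoidHom(r)| \le \sigma_n$, and since $\MonoidHom$ is a homomorphism into an abelian group, $|\MonoidHom(\Rem_n(u))| = |\MonoidHom(\ell) + \MonoidHom(r)| \le |\MonoidHom(\ell)| + |\MonoidHom(r)| \le 2\sigma_n$.

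I do not expect a genuine obstacle here: the proof is short, and the only step requiring care is the minimality argument above, which forces the two segments $\Segment_n(u,\leftSide)$ and $\Segment_n(u,\rightSide)$ to be themselves $n$-light, together with keeping track of the fact that they remain factors of $\mathcal{W}$ so that \cref{lem:bound-on-small-subwords} may be invoked.
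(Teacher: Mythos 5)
Your proof is correct and follows essentially the same route as the paper: split on the $n$-light/$n$-heavy cases of \cref{def:rem}, observe that the relevant words avoid $a^{2\Lambda_n}$ and hence $a^{2B_n}$ (via $B_n\geq\Lambda_n$ from \cref{lem:bound1}), and apply \cref{lem:bound-on-small-subwords}. The only difference is that you spell out the minimality argument showing the two segments are $n$-light, which the paper simply asserts.
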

\begin{proof}
	From the definition of $\Rem_n\colon \Sigma^*\to\Sigma^*$, we have two cases as follows.
	\begin{enumerate}[leftmargin=*]
		\item The word $u$ is $n$-light and thus $\Rem_n(u) = u$ does not contain $a^{2\Lambda_n}$ as a factor.
		      From \cref{lem:bound-on-small-subwords,lem:bound1}, we see that $|\MonoidHom(\Rem_n(u))|\leq \sigma_n$.
		\item The word $u$ is $n$-heavy and thus $\Rem_n(u) = \Segment_n(u,\leftSide)\,  \Segment_n(u,\rightSide)$ where both $\Segment_n(u,\leftSide)$ and $\Segment_n(u,\rightSide)$ are $n$-light and thus do not contain $a^{2\Lambda_n}$ as a factor.
		      From \cref{lem:bound-on-small-subwords,lem:bound1}, we then see that $|\MonoidHom(\Rem_n(u))|\leq 2\sigma_n$.
	\end{enumerate}
	We thus have our desired bounds.
\end{proof}

\subsection{Decompositions}\label{sec:main-proof/decomp}
In the following we define what it means for a $k$-tuple of words in $\Sigma^*$ to have a decomposition.
Later in this proof, we show that the property of having a decomposition is preserved, in some way, by the grammar $M$ for $L$.

\begin{definition}\label{def:decomposition}
	Let $n\in \{2,3,\ldots,2k\}$, then an \emph{$n$-decomposition} of $\vec w = (w_1,w_2,\ldots,w_k) \in (\Sigma^*)^k$ is a nonempty subset
	\[
		E \subseteq \{1,2,\ldots,k\} \times \{\leftSide, \rightSide\}
	\]
	with $|E|=n$ such that, for each $(i,s) \in E$, the word $w_i$ is $n$-heavy and
	\begin{equation}\label{eq:main-equation}
		|\Affix(w_i, s)|
		\geq
		\Lambda_{n-1}
		-
		(C - \MonoidHom(\vec w))
		-
		(2k - n - |L_n(\vec w)|)\sigma_n
		-
		\sum_{j=1}^k\MonoidHom( \Rem_n(w_j) ).
	\end{equation}
	Notice that $|L_n(\vec w)|$ is the number of $n$-light components in $\vec w$.
	We say that $\vec w$ \emph{has a decomposition} if it has an $n$-decomposition for some $n\in \{2,3,\ldots,2k\}$.
\end{definition}

\begin{remark}\label{remark:Compute2decompExplicitly}
    Notice that if $\vec w = (\mathcal W, \varepsilon,\varepsilon, \ldots,\varepsilon)$, then $E = \{(1,\leftSide), (1,\rightSide)\}$ is a $2$-decomposition of $\vec w$.
    In particular, we see that $w_1 = \mathcal W$ is $2$-heavy as it contains $a^{B_1+m^{2k}}$ as a prefix (see~\cref{cor:affix-of-w}) where $B_1+m^{2k}\geq 2\Lambda_1$ follows since we have $2\Lambda_1 = 2m^{2k}$ and $B_1 \geq m^{2k}$ from \cref{def:misc-constants}; and moreover, for each $(i,s)\in E$, we see that the left-hand side of (\ref{eq:main-equation}) evaluates to $B_1+m^{2k}$ (from \cref{cor:affix-of-w}) while the right-hand side of (\ref{eq:main-equation}) evaluates to
    \begin{align*}
        \Lambda_{1} - C - (2k - 2 - (k-1))\sigma_2 - 0
        =
        \Lambda_1 - C - (k-1)\sigma_2
        =
        \Lambda_1 - C - (k-1)(2\Lambda_2 + 2B_2)
    \end{align*}
    where the last equality follows from the definition of $\sigma_n$ in \cref{def:misc-constants}, thus (\ref{eq:main-equation}) is equivalent to
    \[
        B_1+m^{2k}
        \geq
        \Lambda_1 - C - (k-1)(2\Lambda_2 + 2B_2)
    \]
    which follows immediately as we have $B_1> \Lambda_1$ from \cref{lem:bound1}.
\end{remark}

To simplify the proofs of later lemmas in this section, we often assume without loss of generality that a given $n$-decomposition is maximal as follows.

\begin{definition}
	We say that an $n$-decomposition $E$ of $\vec w = (w_1,w_2,\ldots,w_k)$ is \emph{maximal} if there is no $n'$-decomposition $E'$ of $\vec w$ with $|E'|=n' >n= |E|$.
\end{definition}

To simplify notation, we define a set of tuples which we call \emph{$\mathcal W$-sentential tuples} as follows.

\begin{definition}\label{def:sentential-tuple}
	A \emph{$\mathcal W$-sentential tuple} is a $k$-tuple of the form $\vec w = (w_1,w_2,\ldots,w_k)\in (\Sigma^*)^k$ where
	\[
		\mathcal{W} = u_0 w_1 u_1 w_2 u_2 \cdots w_k u_k
	\]
	for some words $u_0,u_1,u_2,\ldots,u_k \in \Sigma^*$, and $|\MonoidHom(\vec w)|\leq C$.
  Thus, each word vector which appears in the derivation of $\mathcal W$, in the grammar $M$, is an example of a $\mathcal W$-sentential tuple.
\end{definition}

We have the following property of $n$-decompositions.

\begin{lemma}\label{lem:affix-bound-from-decomposition}
	Let $\vec w = (w_1,w_2,\ldots,w_k)$ be a $\mathcal W$-sentential tuple, and let $E$ be an $n$-decomposition of $\vec w$.
	Then, $|\Affix(w_i,s)| \geq 5\Lambda_{n}$ for every $(i,s)\in E$.
\end{lemma}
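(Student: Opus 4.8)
The goal is to turn the defining inequality \eqref{eq:main-equation} for an $n$-decomposition into the clean bound $|\Affix(w_i,s)| \geq 5\Lambda_n$. The strategy is to bound from above every negative (i.e. "subtracted") contribution on the right-hand side of \eqref{eq:main-equation}, and to bound from below the leading term $\Lambda_{n-1}$, so that what remains is at least $5\Lambda_n$. The key arithmetic input will be that $\Lambda_{n-1} = m\,\Lambda_n$, so the leading term is genuinely much larger than $\Lambda_n$ once $m$ is large, and $m = 24k + 2C + 5$ was chosen precisely to absorb the error terms.

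First I would estimate each of the four quantities appearing on the right of \eqref{eq:main-equation}. (1) The term $C - \MonoidHom(\vec w)$: since $\vec w$ is a $\mathcal W$-sentential tuple we have $|\MonoidHom(\vec w)| \leq C$ by \cref{def:sentential-tuple}, so $C - \MonoidHom(\vec w) \leq 2C$. (2) The term $(2k - n - |L_n(\vec w)|)\sigma_n$: we have $2k - n - |L_n(\vec w)| \leq 2k$ (it is at most $2k-n \leq 2k$, and in fact nonnegative isn't even needed for an upper bound), and by \cref{lem:bound1} we have $\sigma_n < 6\Lambda_n$, so this term is at most $12k\,\Lambda_n$. (3) The sum $\sum_{j=1}^k \MonoidHom(\Rem_n(w_j))$: each $w_j$ is a factor of $\mathcal W$, so by \cref{cor:bound-on-rem} we have $|\MonoidHom(\Rem_n(w_j))| \leq 2\sigma_n < 12\Lambda_n$, hence the whole sum is bounded in absolute value by $12k\,\Lambda_n$ (or even $6k\cdot 2\Lambda_n$); only the lower bound $-\sum_j \MonoidHom(\Rem_n(w_j)) \leq 12k\Lambda_n$ matters since it is subtracted. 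Putting these together, the right-hand side of \eqref{eq:main-equation} is at least $\Lambda_{n-1} - 2C - 12k\Lambda_n - 12k\Lambda_n = m\Lambda_n - 2C - 24k\Lambda_n$.

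Next I would show $m\Lambda_n - 2C - 24k\Lambda_n \geq 5\Lambda_n$. Since $m = 24k + 2C + 5$ and $\Lambda_n = m^{2k+1-n} \geq 1$ (indeed $\Lambda_n \geq m \geq 2$ for $n \leq 2k$, and in any case $\Lambda_n \geq 1$), we get $m\Lambda_n - 24k\Lambda_n - 5\Lambda_n = (m - 24k - 5)\Lambda_n = 2C\,\Lambda_n \geq 2C$, so $m\Lambda_n - 2C - 24k\Lambda_n - 5\Lambda_n \geq 2C\Lambda_n - 2C \geq 0$. Hence the right-hand side of \eqref{eq:main-equation} is at least $5\Lambda_n$, and therefore $|\Affix(w_i,s)| \geq 5\Lambda_n$ for every $(i,s) \in E$, as claimed.

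**Main obstacle.** There is no real obstacle here — this is a bookkeeping argument. The only points requiring care are (a) correctly reading off which terms in \eqref{eq:main-equation} are subtracted and therefore need an \emph{upper} bound (so that one applies $|\cdot|$-bounds with the right sign), and (b) checking that $2k - n - |L_n(\vec w)| \leq 2k$ without worrying about its sign, and that the $\Rem_n$ sum enters with a minus sign so only its lower bound is relevant. The constant $m = 24k + 2C + 5$ is tailored so that the coefficients $24k$ (from the two $12k\Lambda_n$ terms) and $2C$ cancel against $m$ with $5\Lambda_n$ to spare; I would make sure to invoke \cref{lem:bound1} for $\sigma_n < 6\Lambda_n$ and \cref{cor:bound-on-rem} for the $\Rem_n$ bounds explicitly, and to note $\Lambda_{n-1} = m\Lambda_n$ from \cref{def:misc-constants}.
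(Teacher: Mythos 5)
Your proposal is correct and follows essentially the same route as the paper: bound each subtracted term in \eqref{eq:main-equation} using $|\MonoidHom(\vec w)|\leq C$, \cref{cor:bound-on-rem} and $\sigma_n<6\Lambda_n$ from \cref{lem:bound1}, then use $\Lambda_{n-1}=m\Lambda_n$, $\Lambda_n\geq 1$ and the choice $m=24k+2C+5$ to absorb the error terms. In fact you are slightly more careful than the paper's write-up, which bounds $-(C-\MonoidHom(\vec w))$ below by $-C$ rather than $-2C$; with the correct $-2C$ the choice of $m$ still closes the argument exactly as you show.
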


\begin{proof}
	Since $\vec w = (w_1,w_2,\ldots,w_k)$ is a $\mathcal W$-sentential tuple, we see that
	\[
		|\Affix(w_i, s)|
		\geq
		\Lambda_{n-1}
		-
		C
		-
		2k\sigma_n
		-
		\sum_{j=1}^k\MonoidHom( \Rem_n(w_j) ).
	\]
	From \cref{cor:bound-on-rem}, we then see that
	\[
		|\Affix(w_i, s)|
		\geq
		\Lambda_{n-1}
		-
		C
		-
		4k\sigma_n.
	\]
	From the definition of the constant $\Lambda_{n}$, we see that
	\begin{align*}
		|\Affix(w_i, s)|
		\geq
		\Lambda_{n-1}
		-
		C
		-
		4k\sigma_n
		 & =
		m\Lambda_{n}
		-
		C
		-
		4k\sigma_n
		\\
		 & =
		5\Lambda_n
		+ ( m - 5)\Lambda_n - C - 4k\sigma_n.
	\end{align*}
	From \cref{lem:bound1}, we then see that
	\[
		|\Affix(w_i, s)|
		\geq
		5\Lambda_n
		+ ( m - 5)\Lambda_n - C - 24k \Lambda_n.
	\]
	Thus,
	\[
		|\Affix(w_i, s)|
		\geq
		5\Lambda_n
		+ ( m - 5 -C - 24 k)\Lambda_n.
	\]
	From the value of $m$ given in \cref{def:counterexample}, we see that $|\Affix(w_i,s)|\geq 5\Lambda_n$.
\end{proof}

We now have the following lemma which we use to construct decompositions.

\begin{lemma}\label{lem:affix-bound-to-decomposition}
	Let $\vec w = (w_1,w_2,\ldots,w_k)$ be a $\mathcal W$-sentential tuple, and suppose that \[E \subseteq \{1,2,\ldots,k\}\times\{\leftSide,\rightSide\}\] is a set of size $|E|=n+1\geq 2$ such that $|\Affix(w_i,s)|\geq \Lambda_n$ for each $(i,s)\in E$.
	Then, the set $E$ is an $(n+1)$-decomposition of $\vec w$.
\end{lemma}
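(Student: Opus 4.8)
The plan is to verify the two conditions in \cref{def:decomposition} directly for $n' := n+1$; note that $n' \in \{2,3,\ldots,2k\}$ since $2 \leq |E| \leq 2k$, so the constants $\Lambda_{n'-1} = \Lambda_n$ and $\sigma_{n'} = \sigma_{n+1}$ are defined. First I would establish heaviness: since $\Lambda_n = m\,\Lambda_{n+1}$ and $m \geq 2$, every $(i,s) \in E$ satisfies $|\Affix(w_i,s)| \geq \Lambda_n \geq 2\Lambda_{n+1}$, and as $\Affix(w_i,s)$ is a block of $a$'s occurring as a prefix or suffix of $w_i$, the word $w_i$ contains $a^{2\Lambda_{n+1}}$ as a factor and so is $(n+1)$-heavy. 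The same computation shows that if $(i,\leftSide) \in E$ then $\Segment_{n+1}(w_i,\leftSide) = \varepsilon$, and if $(i,\rightSide) \in E$ then $\Segment_{n+1}(w_i,\rightSide) = \varepsilon$; this observation is what drives the rest of the argument.

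It remains to verify inequality~\eqref{eq:main-equation} with $n$ replaced by $n+1$. Since $|\Affix(w_i,s)| \geq \Lambda_n$ for every $(i,s)\in E$, it is enough to prove
\[
(C - \MonoidHom(\vec w)) + \bigl(2k - (n+1) - |L_{n+1}(\vec w)|\bigr)\sigma_{n+1} + \sum_{j=1}^k \MonoidHom\bigl(\Rem_{n+1}(w_j)\bigr) \geq 0 .
\]
The first term is nonnegative because $\vec w$ is a $\mathcal W$-sentential tuple, so $\MonoidHom(\vec w) \leq C$. For the remaining two terms I would bound $\sum_j \MonoidHom(\Rem_{n+1}(w_j))$ from below by $-\bigl(2k-(n+1)-|L_{n+1}(\vec w)|\bigr)\sigma_{n+1}$, after which the displayed inequality is immediate.

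To obtain this bound, for each $j$ set $e_j := |\{s \in \{\leftSide,\rightSide\} : (j,s)\in E\}| \in \{0,1,2\}$, so that $\sum_j e_j = |E| = n+1$, and recall that every $w_j$ with $e_j \geq 1$ is $(n+1)$-heavy. Using that $\Segment_{n+1}(w_j,s) = \varepsilon$ whenever $(j,s)\in E$: if $e_j = 2$ then $\Rem_{n+1}(w_j) = \varepsilon$; if $e_j = 1$ then $\Rem_{n+1}(w_j)$ is the single surviving segment, which is $(n+1)$-light and a factor of $\mathcal W$, hence $|\MonoidHom(\Rem_{n+1}(w_j))| \leq \sigma_{n+1}$ by \cref{lem:bound-on-small-subwords} (here one uses $\Lambda_{n+1} \leq B_{n+1}$ from \cref{lem:bound1}); and if $e_j = 0$ then \cref{cor:bound-on-rem} gives $|\MonoidHom(\Rem_{n+1}(w_j))| \leq 2\sigma_{n+1}$, improving to $\sigma_{n+1}$ when $w_j$ is $(n+1)$-light. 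Let $\omega_j$ denote the resulting coefficient of $\sigma_{n+1}$, so $\omega_j = 0$ if $e_j = 2$, $\omega_j = 1$ if $e_j = 1$ or if $e_j = 0$ with $w_j$ $(n+1)$-light, and $\omega_j = 2$ if $e_j = 0$ with $w_j$ $(n+1)$-heavy. A short check gives $\omega_j + e_j = 2$ for every $j$ except those with $e_j = 0$ and $w_j$ $(n+1)$-light, where $\omega_j + e_j = 1$; since $e_j \geq 1$ forces $w_j$ heavy, the exceptional indices are precisely those counted by $L_{n+1}(\vec w)$, so $\sum_j(\omega_j + e_j) = 2k - |L_{n+1}(\vec w)|$ and therefore $\sum_j \omega_j = 2k - (n+1) - |L_{n+1}(\vec w)|$. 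Hence $\bigl|\sum_j \MonoidHom(\Rem_{n+1}(w_j))\bigr| \leq \bigl(\sum_j \omega_j\bigr)\sigma_{n+1} = \bigl(2k-(n+1)-|L_{n+1}(\vec w)|\bigr)\sigma_{n+1}$, which is exactly what was needed.

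The main obstacle is this last counting step. A crude estimate such as $|\MonoidHom(\Rem_{n+1}(w_j))| \leq 2\sigma_{n+1}$ for all $j$ overshoots by $(n+1)\sigma_{n+1}$, so one genuinely needs to exploit that each pair in $E$ not only certifies heaviness of a component but also deletes one of its $\Segment$ pieces (and both pieces when $e_j = 2$), and then to observe that the bookkeeping identity $\sum_j(\omega_j + e_j) = 2k - |L_{n+1}(\vec w)|$ makes the bound match the term $\bigl(2k-(n+1)-|L_{n+1}(\vec w)|\bigr)\sigma_{n+1}$ on the nose, with no slack to spare.
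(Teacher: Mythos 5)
Your proposal is correct and follows essentially the same route as the paper: establish $(n+1)$-heaviness from $\Lambda_n = m\Lambda_{n+1} \geq 2\Lambda_{n+1}$, observe that each $(i,s)\in E$ kills the corresponding $\Segment_{n+1}$ piece, and then bound $\bigl|\sum_j \MonoidHom(\Rem_{n+1}(w_j))\bigr|$ by $\bigl(2k-(n+1)-|L_{n+1}(\vec w)|\bigr)\sigma_{n+1}$ so that the right-hand side of the defining inequality is at most $\Lambda_n$. Your explicit $(e_j,\omega_j)$ bookkeeping is just a reorganisation of the paper's count $(|L_{n+1}(\vec w)|+2|H|-|E|)\sigma_{n+1}$ over the heavy and light components, so there is nothing further to add.
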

\begin{proof}
	From the definition of $m$ and $\Lambda_n$ in \cref{def:counterexample,def:misc-constants}, we see that $\Lambda_n \geq 2\Lambda_{n+1}$ and thus, for each $(i,s)\in E$, the word $w_i$ is $(n+1)$-heavy.
	From \cref{cor:bound-on-rem}, we see that
	\[|\MonoidHom(\Rem_{n+1}(w_i))|\leq \sigma_{n+1}\]
	for each $(n+1)$-light word $w_i$.
	From the definition of $\Rem_{n}\colon \Sigma^*\to\Sigma^*$, if $w_i$ is $(n+1)$-heavy, then
	\[
		|\MonoidHom(\Rem_{n+1}(w_i))|
		\leq
		|\MonoidHom(\Segment_{n+1}(w_i,\leftSide))|
		+
		|\MonoidHom(\Segment_{n+1}(w_i,\rightSide))|.
	\]
	Notice also that each $\Segment_{n+1}(w_i,\rightSide))$, as above, is $(n+1)$-light and thus
	\[
		|\MonoidHom(\Segment_{n+1}(w_i,\leftSide))|\leq \sigma_{n+1}
	\]
	from \cref{cor:bound-on-rem}.
	Moreover, if $(i,s)\in E$, then $\Segment_{n+1}(w_i,s)= \varepsilon$ and thus $|\MonoidHom(\Segment_{n+1}(w_i,s))|=0$.

	Let $H = \{1,2,\ldots,k\}\setminus L_{n+1}(\vec w)$ be the indices of all the $(n+1)$-heavy words in $\vec w$.
	From the previous paragraph and the triangle inequality, we then see that
	\[
		\left|
		\sum_{i\in H}
		\MonoidHom(\Rem_{n+1}(w_i))
		\right|
		\leq
		(2|H|-|E|)\sigma_{n+1}.
	\]
	Notice that $2|H|-|E|$ counts the sides of the $(n+1)$-heavy words which are not in $E$.

	From the above, we see that
	\begin{align*}
		\left|\sum_{i=1}^k \MonoidHom(\Rem_{n+1}(w_i)) \right|
		 & \leq
		(|L_{n+1}(\vec w)|+2|H|-|E|)\sigma_{n+1}
		\\
		 & \qquad=(|L_{n+1}(\vec w)| + 2(k-|L_{n+1}(\vec w)|) - (n+1))\sigma_{n+1}.
		\\
		 & \qquad=(2k - (n+1) - |L_{n+1}(\vec w)|)\sigma_{n+1}.
	\end{align*}
	From this bound, we then see that
	\[
		(2k - (n+1) - |L_{n+1}(\vec w)|)\sigma_{n+1}
		+
		\sum_{j=1}^k\MonoidHom( \Rem_{n+1}(w_j) )
		\geq 0.
	\]
	Thus, we see that
	\[
		\Lambda_n
		-
		(C - \MonoidHom(\vec w))
		-
		(2k - (n+1) - |L_{n+1}(\vec w)|)\sigma_{n+1}
		-
		\sum_{j=1}^k\MonoidHom( \Rem_{n+1}(w_j) )
		\leq
		\Lambda_n  - (C-\MonoidHom(\vec w)).
	\]
	Since $\vec w$ is a $\mathcal W$-sentential tuple, by definition, we have $|\MonoidHom(\vec w)|\leq C$, and thus
	\[
		\Lambda_n
		-
		(C - \MonoidHom(\vec w))
		-
		(2k - (n+1) - |L_{n+1}(\vec w)|)\sigma_{n+1}
		-
		\sum_{j=1}^k\MonoidHom( \Rem_{n+1}(w_j) )
		\leq
		\Lambda_n.
	\]

	From the assumptions in our lemma statement, we then see that
	\[
		|\Affix_{n+1}(w_i,s)|\geq \Lambda_n
		\geq \Lambda_n
		-
		(C - \MonoidHom(\vec w))
		-
		(2k - (n+1) - |L_{n+1}(\vec w)|)\sigma_{n+1}
		-
		\sum_{j=1}^k\MonoidHom( \Rem_{n+1}(w_j) )
	\]
	for each $(i,s)\in E$.
	Hence, $E$ is an $(n+1)$-decomposition as required.
\end{proof}

We have the following property of maximal decompositions.

\begin{lemma}\label{lem:maximal-form}
	Let $E$ be a maximal $n$-decomposition of a $\mathcal W$-sentential tuple $\vec w$.
	Then, for each $n$-heavy component $i\notin L_n(\vec w)$ and each $s \in \{\leftSide,\rightSide\}$ we have $\Segment_n(w_i,s)= \varepsilon$ if and only if $(i,s) \in E$.
\end{lemma}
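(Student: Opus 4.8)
The plan is to prove the two implications of the biconditional separately, leaning on \cref{lem:affix-bound-from-decomposition} and \cref{lem:affix-bound-to-decomposition} together with the maximality hypothesis on $E$. The bridge between these two lemmas is the elementary observation that, for an $n$-heavy word $w_i$ and a side $s\in\{\leftSide,\rightSide\}$, one has $\Segment_n(w_i,s)=\varepsilon$ if and only if $|\Affix(w_i,s)|\geq 2\Lambda_n$. Indeed, by \cref{def:segment} the word $\Segment_n(w_i,\leftSide)$ is empty exactly when $w_i$ can be written as $a^{2\Lambda_n}u'$, which says precisely that the maximal $a^*$-prefix of $w_i$ has length at least $2\Lambda_n$; the case $s=\rightSide$ is symmetric.

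For the ``if'' direction I would take $(i,s)\in E$. Then $w_i$ is $n$-heavy by \cref{def:decomposition}, and \cref{lem:affix-bound-from-decomposition} gives $|\Affix(w_i,s)|\geq 5\Lambda_n\geq 2\Lambda_n$ (using $\Lambda_n=m^{2k+1-n}>0$), so the observation above yields $\Segment_n(w_i,s)=\varepsilon$.

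For the ``only if'' direction I would argue by contradiction: suppose $w_i$ is $n$-heavy, $\Segment_n(w_i,s)=\varepsilon$, yet $(i,s)\notin E$. Set $E'=E\cup\{(i,s)\}$, which has size $n+1$ precisely because the pair $(i,s)$ is new. For every $(j,t)\in E$ we have $|\Affix(w_j,t)|\geq 5\Lambda_n\geq\Lambda_n$ by \cref{lem:affix-bound-from-decomposition}, and for the added pair the observation gives $|\Affix(w_i,s)|\geq 2\Lambda_n\geq\Lambda_n$. Hence \cref{lem:affix-bound-to-decomposition} applies to $E'$ and shows that $E'$ is an $(n+1)$-decomposition of $\vec w$, contradicting the maximality of $E$. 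Therefore $(i,s)\in E$.

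The proof involves no real difficulty beyond bookkeeping: the main point to watch is verifying that the hypotheses of \cref{lem:affix-bound-to-decomposition} are genuinely met — namely that $E'$ really has cardinality exactly $n+1$ (this is where $(i,s)\notin E$ is used), that $w_i$ is genuinely $n$-heavy so that the $\Segment_n$--$\Affix$ equivalence is available, and that the constants satisfy $5\Lambda_n\geq\Lambda_n$ and $2\Lambda_n\geq\Lambda_n$. All of the quantitative estimates needed are already supplied by \cref{lem:affix-bound-from-decomposition,lem:affix-bound-to-decomposition}, so no new computation is required.
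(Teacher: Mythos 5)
Your proof is correct and follows essentially the same route as the paper: both directions use \cref{lem:affix-bound-from-decomposition} for the lower bound $5\Lambda_n$ on affixes of pairs in $E$, the elementary equivalence $\Segment_n(w_i,s)=\varepsilon \Leftrightarrow |\Affix(w_i,s)|\geq 2\Lambda_n$ from \cref{def:segment}, and \cref{lem:affix-bound-to-decomposition} to upgrade $E\cup\{(i,s)\}$ to an $(n+1)$-decomposition contradicting maximality. Your write-up is, if anything, slightly more explicit than the paper's about verifying the hypotheses of \cref{lem:affix-bound-to-decomposition}.
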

\begin{proof}
	Let $E$ be a maximal $n$-decomposition as in the lemma statement.

	If $(i,s)\in E$, then from \cref{lem:affix-bound-from-decomposition} we see that $|\Affix(w_i,s)|\geq 5\Lambda_n$ and thus $\Segment_n(w_i,s)=\varepsilon$.
	Thus, if $(i,s)\in E$, then $\Segment_n(w_i,s) = \varepsilon$.
	All that remains is to show the converse of this statement as follows.

	Suppose for contradiction that $w_i$ is an $n$-heavy word with $\Segment_n(w_i,s)=\varepsilon$ and $(i,s)\notin E$.
	Then, from the definition of $\Segment_n\colon \Sigma^*\times \{\leftSide,\rightSide\}\to\Sigma^*$ in \cref{def:segment}, we see that $|\Affix(w_i,s)|\geq 2\Lambda_n$.
	Thus, from \cref{lem:affix-bound-to-decomposition,lem:affix-bound-from-decomposition}, we see that $E' = E\cup\{(i,s)\}$ is an $(n+1)$-decomposition of $\vec w$ which contradicts the maximality of $E$.
	Thus, if $\Segment_n(w_i,s) = \varepsilon$, then $(i,s)\in E$.
\end{proof}

We have the following lemma which is used in the proof of \cref{prop:decomp3}.

\begin{lemma}\label{lem:technical}
	Let $w\in \Sigma^*$ be an $n$-heavy factor of $\mathcal{W}$, and suppose that $w = uv$ with $u,v\in \Sigma^*$.
	Then, at least one of the following five conditions holds.
	\begin{enumerate}
		\item\label{lem:technical/1} $|\Affix(u,\rightSide)| \geq \Lambda_n$;
		\item\label{lem:technical/2} $|\Affix(v,\leftSide)| \geq \Lambda_n$;
		\item\label{lem:technical/3} $u$ and $v$ are both $n$-heavy, and $\MonoidHom(\Rem_n(u)) + \MonoidHom(\Rem_n(v)) \geq \MonoidHom(\Rem_n(w))-\sigma_n$;
		\item\label{lem:technical/4} $u$ is $n$-light, $v$ is $n$-heavy, and $\MonoidHom(\Rem_n(u)) + \MonoidHom(\Rem_n(v))  = \MonoidHom(\Rem_n(w))$;
		\item\label{lem:technical/5} $u$ is $n$-heavy, $v$ is $n$-light, and $\MonoidHom(\Rem_n(u)) + \MonoidHom(\Rem_n(v))  = \MonoidHom(\Rem_n(w))$.
	\end{enumerate}
\end{lemma}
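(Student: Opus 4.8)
The plan is to follow the occurrences of the factor $a^{2\Lambda_n}$ in $w$ relative to the place where $w$ is split into $u$ and $v$, and to read off one of the five conditions from that data. Write $p=|u|$, so $u$ is the length-$p$ prefix of $w$ and $v$ the corresponding suffix. First I would dispose of conditions~\ref{lem:technical/1} and~\ref{lem:technical/2}: if $|\Affix(u,\rightSide)|\geq\Lambda_n$ then~\ref{lem:technical/1} holds and we are done, and likewise if $|\Affix(v,\leftSide)|\geq\Lambda_n$ then~\ref{lem:technical/2} holds, so from now on I may assume both of these affixes have length strictly less than $\Lambda_n$.

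The key observation under this assumption is that no occurrence of $a^{2\Lambda_n}$ in $w$ can straddle the split point: if an occurrence of $a^{2\Lambda_n}$ contributed a nonempty $a$-suffix $a^s$ of $u$ together with a nonempty $a$-prefix $a^t$ of $v$, then $s+t=2\Lambda_n$, whence $\max\{s,t\}\geq\Lambda_n$ and so $|\Affix(u,\rightSide)|\geq\Lambda_n$ or $|\Affix(v,\leftSide)|\geq\Lambda_n$, contrary to assumption. Hence every occurrence of $a^{2\Lambda_n}$ in $w$ lies entirely within $u$ or entirely within $v$. Since $w=uv$ is $n$-heavy it contains at least one such occurrence, so at least one of $u$, $v$ is $n$-heavy; in particular $u$ and $v$ are not both $n$-light, and three cases remain.

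Suppose first that $u$ is $n$-light and $v$ is $n$-heavy, which will give condition~\ref{lem:technical/4}. Since no occurrence of $a^{2\Lambda_n}$ in $w$ lies within $u$ and none straddles the split point, every occurrence lies within $v$; hence the leftmost and rightmost occurrences of $a^{2\Lambda_n}$ in $w$ are precisely those of $v$, so $\Segment_n(w,\leftSide)=u\,\Segment_n(v,\leftSide)$ and $\Segment_n(w,\rightSide)=\Segment_n(v,\rightSide)$. Therefore $\Rem_n(w)=u\,\Rem_n(v)=\Rem_n(u)\,\Rem_n(v)$ as words, using $\Rem_n(u)=u$; applying the monoid homomorphism $\MonoidHom$ to this identity gives $\MonoidHom(\Rem_n(u))+\MonoidHom(\Rem_n(v))=\MonoidHom(\Rem_n(w))$, which is condition~\ref{lem:technical/4}. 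The case in which $u$ is $n$-heavy and $v$ is $n$-light is symmetric and yields condition~\ref{lem:technical/5}.

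It remains to treat the case in which $u$ and $v$ are both $n$-heavy. Every occurrence of $a^{2\Lambda_n}$ in $w$ lies within $u$ or within $v$; since $u$ contains at least one such occurrence and every occurrence lying within $v$ begins strictly to the right of every occurrence lying within $u$, the leftmost occurrence in $w$ is the leftmost one in $u$, and symmetrically the rightmost occurrence in $w$ is the rightmost one in $v$. Hence $\Rem_n(w)=\Segment_n(u,\leftSide)\,\Segment_n(v,\rightSide)$, whereas $\Rem_n(u)\,\Rem_n(v)=\Segment_n(u,\leftSide)\,x\,\Segment_n(v,\rightSide)$ with $x\coloneqq\Segment_n(u,\rightSide)\,\Segment_n(v,\leftSide)$. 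The word $x$ is a factor of $w$, hence of $\mathcal{W}$, and it contains no $a^{2\Lambda_n}$: neither $\Segment_n(u,\rightSide)$ nor $\Segment_n(v,\leftSide)$ does, by definition of $\Segment_n$, and any occurrence meeting both would straddle the split point. Since $B_n\geq\Lambda_n$ by \cref{lem:bound1}, $x$ therefore contains no $a^{2B_n}$ either, so \cref{lem:bound-on-small-subwords} gives $|\MonoidHom(x)|\leq\sigma_n$; applying $\MonoidHom$ to the two expressions above then gives $\MonoidHom(\Rem_n(u))+\MonoidHom(\Rem_n(v))=\MonoidHom(\Rem_n(w))+\MonoidHom(x)\geq\MonoidHom(\Rem_n(w))-\sigma_n$, which is condition~\ref{lem:technical/3}. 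The step I expect to require the most care is the bookkeeping expressing $\Segment_n(w,\leftSide)$ and $\Segment_n(w,\rightSide)$ in terms of the corresponding objects for $u$ and $v$; the impossibility of an occurrence straddling the split point is exactly what makes those identities hold, after which everything reduces to additivity of $\MonoidHom$ on concatenations.
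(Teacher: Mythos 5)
Your proof is correct and follows essentially the same route as the paper's: reduce to the case where both $|\Affix(u,\rightSide)|$ and $|\Affix(v,\leftSide)|$ are less than $\Lambda_n$, observe that then no occurrence of $a^{2\Lambda_n}$ straddles the cut, and compare $\Rem_n(w)$ with $\Rem_n(u)\,\Rem_n(v)$ via the $n$-light middle word $\Segment_n(u,\rightSide)\,\Segment_n(v,\leftSide)$, bounded by $\sigma_n$ using \cref{lem:bound-on-small-subwords,lem:bound1}. Your organisation is marginally tidier (the no-straddling observation disposes of the both-light case and yields the exact identities in the light/heavy cases uniformly), but the decomposition and the key lemma invoked are the same as in the paper.
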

\begin{proof}
	Suppose that $w$ is an $n$-heavy factor of $\mathcal{W}$ with $w = uv$ as in the lemma statement.
	We separate our proof into two parts as follows.

	\medskip

	\noindent
	\underline{Case 1}: both $u$ and $v$ are $n$-light.
	\nopagebreak

	\smallskip
	\noindent
	Here we see that $u$ and $v$ must be of the form
	\[
		u = \Segment_n(w,\leftSide) a^p
		\qquad
		\text{and}
		\qquad
		v = a^q \Segment(w,\rightSide),
	\]
	respectively, where $p+q \geq 2\Lambda_n$. Thus, we find that either $p\geq \Lambda_n$ or $q\geq \Lambda_n$, and thus we are either in case (\ref{lem:technical/1}) or (\ref{lem:technical/2}), respectively.

	\medskip

	\noindent
	\underline{Case 2}: at least one of $u$ or $v$ is $n$-heavy.
	\nopagebreak

	\smallskip
	\noindent
	Notice that if
	\[
		|\Affix(u,\rightSide)|\geq \Lambda_n
		\qquad
		\text{or}
		\qquad
		|\Affix(v,\leftSide)| \geq \Lambda_n,
	\]
	then we are in case (\ref{lem:technical/1}) or (\ref{lem:technical/2}), respectively.
	Thus, in the remainder of this proof, we assume that both
	\[
		|\Affix(u,\rightSide)|< \Lambda_n
		\qquad
		\text{and}
		\qquad
		|\Affix(v,\leftSide)| < \Lambda_n,
	\]
	and thus the factor 
	\[
		w'=\Segment_n(u,\rightSide)\Segment_n(v,\leftSide)
	\]
	of $w$ does not contain $a^{2\Lambda_n}$ as a factor, that is, $w'$ is $n$-light.

	\medskip

	\noindent
	\underline{Case 2.1}: both $u$ and $v$ are $n$-heavy.
	\nopagebreak

	\smallskip
	\noindent
	From \cref{lem:bound1,lem:bound-on-small-subwords}, we then see that
	\[
		\MonoidHom(\Segment_n(u,\rightSide)) + \MonoidHom(\Segment_n(u,\leftSide))
		=
		\MonoidHom(w')
		\geq -\sigma_n.
	\]
	From the definition of $\Rem_n\colon \Sigma^*\to \Sigma^*$, we see that
	\begin{align*}
		\MonoidHom(\Rem_n(u)) + \MonoidHom(\Rem_n(v))
		 & =
		\MonoidHom(\Segment_n(u,\leftSide)) + \MonoidHom(\Segment_n(v,\rightSide))
		+
		\MonoidHom(\Segment_n(u,\rightSide)) + \MonoidHom(\Segment_n(v,\leftSide))
		\\
		 & =
		\MonoidHom(\Segment_n(w,\leftSide)) + \MonoidHom(\Segment_n(w,\rightSide))
		+
		\MonoidHom(\Segment_n(u,\rightSide)) + \MonoidHom(\Segment_n(v,\leftSide))
		\\
		 & = \MonoidHom(\Rem_n(w)) + \MonoidHom(w')
		\geq \MonoidHom(\Rem_n(w)) -\sigma_n.
	\end{align*}
	Thus, we are in case (\ref{lem:technical/3}) of the lemma.

	\medskip

	\noindent
	\underline{Case 2.2}: $u$ is $n$-light, and $v$ is $n$-heavy.
	\nopagebreak

	\smallskip
	\noindent
	The only way for $u$ to be $n$-light is if it is either a factor
    of $\Segment_n(w,\leftSide)$ or a word of the form $\Segment_n(w,\leftSide)a^p$ where $p< 2\Lambda_n$. We consider these cases as follows.
	\begin{itemize}
		\item If $u$ is a factor of $\Segment_n(w,\leftSide)$, then we see that
		      \begin{align*}
			      \Rem_n(w) & = \Segment_n(w,\leftSide)\,\Segment_n(w,\rightSide)                                \\
			                & = u\,\Segment_n(v,\leftSide)\,\Segment_n(v,\rightSide) = \Rem_n(u) \, \Rem_n(v).
		      \end{align*}
		      Thus, \[\MonoidHom(\Rem_n(u))+\MonoidHom(\Rem_n(v)) = \MonoidHom(\Rem_n(w))\] and we are in case (\ref{lem:technical/4}) of the lemma.

        \item If $u = \Segment_n(w,\leftSide)a^p$ where $p< 2\Lambda_n$, then $v = a^q \, v'$ with $p+q \geq 2\Lambda_n$. So either $p\geq \Lambda_n$ or $q\geq \Lambda_n$, which correspond to cases (\ref{lem:technical/1}) and (\ref{lem:technical/2}) of the lemma.
	\end{itemize}

	\medskip

	\noindent
	\underline{Case 2.3}: $u$ is $n$-heavy, and $v$ is $n$-light.
	\nopagebreak

	\smallskip
	\noindent
	The proof of this case is symmetric to Case~2.2.

	\medskip

	\noindent
	\underline{Conclusion}: We see that in all cases, our words fall into one of the five cases listed in the lemma.
	Moreover, we see that these cases cover all possible situations.
\end{proof}

\subsection{Maintaining a decomposition}\label{sec:main-proof/maintaining-decomp}
In this subsection we show that, for $\mathcal W$-sentential tuples, the property of having a decomposition is maintained by the operations known as left letter deletion and left split (both of which are the reverse of some of the operations presented in \cref{def:normal-form}).

\begin{proposition}\label{prop:decomp1}
	Let $\vec w = (w_1,w_2,\ldots,w_k)$ and $\vec v = (v_1,v_2,\ldots,v_k)$ be $\mathcal W$-sentential tuples as in \cref{def:sentential-tuple}, and suppose that $x\in \{1,2,\ldots,k\}$ with
	\[
		(w_1,w_2,\ldots,w_k) = (v_1,v_2,\ldots,v_{x-1},a v_{x}, v_{x+1},\ldots,v_k).
	\]
	If $\vec w$ has an $n$-decomposition, then $\vec v$ has an $n'$-decomposition where $n'\geq n$.
	That is, given a $\mathcal W$-sentential tuple with an $n$-decomposition, if we still have a $\mathcal W$-sentential tuple after deleting an instance of the letter $a$ from the left-hand side of some component, then there exists an $n'$-decomposition, with $n'\geq n$, for the new tuple.
	(Recall that if $\vec w$ is a $\mathcal W$-sentential tuple, then $|\MonoidHom(\vec w)|\leq C$. Thus, we cannot directly use this process to remove sequences of $a$'s of arbitrarily large length and still obtain a decomposition.)
\end{proposition}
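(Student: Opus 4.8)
The plan is to compare the defining inequality (\ref{eq:main-equation}) for the $n$-decomposition of $\vec w$ with the corresponding inequality for $\vec v$, and to show that either the old witness set $E$ still works for $\vec v$ (giving $n'=n$), or else we can enlarge $E$ and invoke \cref{lem:affix-bound-to-decomposition} to get a larger decomposition. The key point is that passing from $\vec v$ to $\vec w$ by prepending a single $a$ to the $x$-th component changes the relevant quantities only in controlled ways: $\MonoidHom(\vec w) = \MonoidHom(\vec v) + 1$, so $C - \MonoidHom(\vec w) = C - \MonoidHom(\vec v) - 1$, which makes the right-hand side of (\ref{eq:main-equation}) for $\vec v$ \emph{larger} by exactly $1$ than one might naively hope — but this is offset by the other terms, which I analyse below.

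First I would fix a maximal $n$-decomposition $E$ of $\vec w$ (using the remark after \cref{def:decomposition} that we may assume maximality, and \cref{lem:maximal-form}). I would then split into cases according to whether the affected index $x$ satisfies $(x,\leftSide)\in E$, $(x,\rightSide)\in E$, or $x$ appears in neither. The easy observations are: for $i\neq x$ we have $w_i = v_i$, so $\Affix$, $n$-heaviness/lightness, and $\Rem_n$ are all unchanged at those coordinates; and $|L_n(\vec w)|$ and $|L_n(\vec v)|$ can differ by at most the change at coordinate $x$. For coordinate $x$, prepending $a$ to $v_x$ can only lengthen $\Affix(\cdot,\leftSide)$ by $1$, cannot shorten $\Affix(\cdot,\rightSide)$, can only turn an $n$-light word into an $n$-heavy one (never the reverse), and changes $\MonoidHom(\Rem_n(v_x))$ by at most a bounded amount — here I would use \cref{cor:bound-on-rem} together with the structural description of $\Rem_n$ and $\Segment_n$ to pin down the change precisely (in the "light stays light" subcase it changes by exactly $+1$; in the subcase where $v_x$ is already $n$-heavy, $\Rem_n$ is unchanged since prepending $a$ does not move $\Segment_n(v_x,\leftSide)$ once there is already a block $a^{2\Lambda_n}$ to its right; the remaining subcase is when $av_x$ is $n$-heavy but $v_x$ is $n$-light, which forces $|\Affix(av_x,\leftSide)|\geq 2\Lambda_n$ and hence $|\Affix(v_x,\leftSide)|\geq 2\Lambda_n - 1 \geq \Lambda_{n}$, putting us in a position to apply \cref{lem:affix-bound-to-decomposition} to $E\cup\{(x,\leftSide)\}$).

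Assembling these, in the subcase where $(x,\leftSide)\in E$ I would show the inequality (\ref{eq:main-equation}) for $\vec v$ at every $(i,s)\in E$: for $i\neq x$ nothing on the left changes and the right-hand side changes by at most a bounded total (the $C-\MonoidHom$ term goes down by $1$, the $\Rem_n$-sum term changes by the bounded amount from coordinate $x$, and the $|L_n|$ term changes by at most $1$), and all of these are absorbed by the slack $|\Affix(w_i,s)|\geq 5\Lambda_n$ guaranteed by \cref{lem:affix-bound-from-decomposition} together with \cref{lem:bound1} and the size of $m$ from \cref{def:counterexample}; for $i=x$ with $s=\leftSide$ the left side only increases. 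In the subcase where $x$ is not in $E$ but $av_x$ became $n$-heavy, I pass to the larger set $E\cup\{(x,\leftSide)\}$ as above and get $n'=n+1$. In the remaining subcases ($av_x$ stays $n$-light, or $v_x$ was already $n$-heavy) the set $E$ itself remains an $n$-decomposition of $\vec v$ by the same slack argument, giving $n'=n$.

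\textbf{The main obstacle} I expect is bookkeeping the exact change in $\sum_{j}\MonoidHom(\Rem_n(w_j))$ and in $|L_n(\vec w)|$ when coordinate $x$ flips from light to heavy: one must verify that the right-hand side of (\ref{eq:main-equation}) does not increase by more than the available slack $5\Lambda_n$ (minus lower-order terms), and that the $-(2k - n - |L_n(\vec w)|)\sigma_n$ term moves in a harmless direction. This is where \cref{cor:bound-on-rem} (bounding $|\MonoidHom(\Rem_n(\cdot))|$ by $2\sigma_n$, or $\sigma_n$ in the light case), \cref{lem:bound1} ($6\Lambda_n > \sigma_n$), and the concrete value $m = 24k + 2C + 5$ all have to be combined; the inequality is tight enough that the constant $5$ in $5\Lambda_n$ and the coefficient $24k$ in $m$ are exactly what is needed, so the argument is really an accounting exercise rather than a conceptual one, but it must be done carefully coordinate-by-coordinate.
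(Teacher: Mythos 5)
Your overall skeleton matches the paper's: fix a maximal $n$-decomposition $E$, split on whether $(x,\leftSide)\in E$ and on the heaviness of the $x$-th component, either reuse $E$ or enlarge it via \cref{lem:affix-bound-to-decomposition}. However, there are two genuine problems in the quantitative core. First, your claim that ``in the subcase where $v_x$ is already $n$-heavy, $\Rem_n$ is unchanged'' is false: when $(x,\leftSide)\notin E$, maximality and \cref{lem:maximal-form} give $\Segment_n(w_x,\leftSide)\neq\varepsilon$, and then $\Segment_n(w_x,\leftSide)=a\,\Segment_n(v_x,\leftSide)$, so $\MonoidHom(\Rem_n(v_x))=\MonoidHom(\Rem_n(w_x))-1$. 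This $+1$ in the term $-\sum_j\MonoidHom(\Rem_n(v_j))$ is exactly what cancels the $-1$ coming from $\MonoidHom(\vec v)=\MonoidHom(\vec w)-1$ in the right-hand side of (\ref{eq:main-equation}); it is not a nuisance to be bounded but the mechanism that makes the inequality survive. Worse, without invoking \cref{lem:maximal-form} at precisely this point, the case where $w_x$ begins with $a^{2\Lambda_n}$ while $v_x$ begins with only $a^{2\Lambda_n-1}$ would let $\MonoidHom(\Rem_n(\cdot))$ jump by up to $\sigma_n$, which your argument cannot control; the paper excludes this by noting that $\Segment_n(w_x,\leftSide)=\varepsilon$ forces $(x,\leftSide)\in E$ and hence $|\Affix(w_x,\leftSide)|\geq 5\Lambda_n$.

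Second, and more fundamentally, the proposed mechanism of absorbing changes in the right-hand side of (\ref{eq:main-equation}) ``by the slack $|\Affix(w_i,s)|\geq 5\Lambda_n$'' is invalid. The bound $5\Lambda_n$ from \cref{lem:affix-bound-from-decomposition} is a \emph{consequence} of (\ref{eq:main-equation}), not headroom above it: the only lower bound you have on $|\Affix(w_i,s)|$ is the right-hand side of (\ref{eq:main-equation}) itself, which can be far larger than $5\Lambda_n$ and may be attained with equality. So if the right-hand side increased by even $2$ in passing from $\vec w$ to $\vec v$, you could not conclude anything. The proof must therefore show that the net change of the right-hand side is $\leq 0$ (it is exactly $0$ in the cases where $\Affix(v_i,s)=\Affix(w_i,s)$ for all $(i,s)\in E$, and exactly $-1$ in the case $(x,\leftSide)\in E$, where the left-hand side may also drop by $1$); this exact accounting, driven by the sign bookkeeping you inverted in your opening paragraph ($C-\MonoidHom(\vec v)=C-\MonoidHom(\vec w)+1$ makes the right-hand side \emph{smaller}, not larger), is the content of the paper's three cases and cannot be replaced by a slack argument.
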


\begin{proof}
	Without loss of generality, we assume that $E$ is a maximal $n$-decomposition for the sentential tuple $\vec w$.
	We note here that $\MonoidHom(\vec v) = \MonoidHom(\vec w)-1$, that $\Rem_n(v_i) = \Rem_n(w_i)$ for each $i\neq x$; and that $\Affix(v_i,s) = \Affix(w_i,s)$ for each $i\neq x$ and $s\in \{\leftSide, \rightSide\}$.
	The remainder of this proof is separated into three cases as follows.

	\medskip

	\noindent
	\underline{Case 1}: $(x,\leftSide) \in E$.
	\nopagebreak

	\smallskip
	\noindent
	From \cref{lem:affix-bound-from-decomposition}, we see that
	\[
		|\Affix(v_x, \leftSide)| = |\Affix(w_x, \leftSide)| - 1 \geq 5\Lambda_n - 1\geq 2\Lambda_n.
	\]
	Further, we see that
	\[
		|\Affix(v_x,\rightSide)| \in \{ |\Affix(w_x,\rightSide)|,\, |\Affix(w_x,\rightSide)|-1 \},
	\]
	where the $|\Affix(w_x,\rightSide)|-1$ corresponds to the case where $w_x = a^{|w_x|}$.

	Thus,
	\[
		|\Affix(v_i, s)| \geq |\Affix(w_i,s)|-1
	\]
	for each $i\in \{1,2,\ldots,k\}$ and $s\in \{\leftSide,\rightSide\}$.

	We see that the words $v_x$ and $w_x$ are both $n$-heavy, and we see that
	\[
		L_n(\vec v) = L_n(\vec w)
		\qquad
		\text{and}
		\qquad
		\Rem_n(v_x) = \Rem_n(w_x).
	\]
	We are now ready to prove this case as follows.

	From the above observations, we see that
	\begin{align*}
		|\Affix(v_i,s)|
		 & \geq
		|\Affix(w_i,s)|-1
		\\&\geq
		\Lambda_{n-1}
		-
		(C - (\MonoidHom(\vec w) - 1))
		-
		(2k - n - |L_n(\vec w)|)\sigma_n
		-
		\sum_{j=1}^k\MonoidHom( \Rem_n(w_j) )
		\\&=
		\Lambda_{n-1}
		-
		(C - \MonoidHom(\vec v))
		-
		(2k - n - |L_n(\vec v)|)\sigma_n
		-
		\sum_{j=1}^k\MonoidHom( \Rem_n(v_j) )
	\end{align*}
	for each $(i,s)\in E$.
	Hence, we find that $E$ is an $n$-decomposition for $\vec v$.

	\medskip

	\noindent
	\underline{Case 2}: $(x,\leftSide) \notin E$ and $w_x$ is $n$-heavy.
	\nopagebreak

	\smallskip
	\noindent
	From \cref{lem:maximal-form}, we see that
	$
		\Segment_n(w_x, \leftSide) \neq \varepsilon
	$
	and thus removing an $a$ from the left-hand side of $w_x$ does not affect its status as being $n$-heavy, and furthermore does not affect $\Affix(w_x,\rightSide)$.
	That is, we have
	\[
		\Affix(v_x, \rightSide) = \Affix(w_x,\rightSide).
	\]
	Moreover, from the definition of $\Rem_n\colon \Sigma^*\to\Sigma^*$ on $n$-heavy words, we see that
	\[
		\MonoidHom(\Rem_n(v_x)) = \MonoidHom(\Rem_n(w_x))-1.
	\]

	From the above observations, we see that
	\begin{align*}
		|\Affix(v_i,s)|
		 & \geq
		|\Affix(w_i,s)|
		\\&\geq
		\Lambda_{n-1}
		-
		(C - (\MonoidHom(\vec w) - 1))
		-
		(2k - n - |L_n(\vec w)|)\sigma_n
		-
		\sum_{j=1}^{x-1}\MonoidHom( \Rem_n(w_j) )
		\\&
		\phantom{\qquad\qquad\qquad\geq
			\Lambda_{n-1}
			-
			(C - (}
		-(\MonoidHom( \Rem_n(w_x) )-1)
		-\sum_{j={x+1}}^k\MonoidHom( \Rem_n(w_j) )
		\\&=
		\Lambda_{n-1}
		-
		(C - \MonoidHom(\vec v))
		-
		(2k - n - |L_n(\vec v)|)\sigma_n
		-
		\sum_{j=1}^{k}\MonoidHom( \Rem_n(v_j) )
	\end{align*}
	for each $(i,s)\in E$.
	Hence, we find that $E$ is an $n$-decomposition for $\vec v$.

	\medskip

	\noindent
	\underline{Case 3}: $w_x$ is $n$-light.
	\nopagebreak

	\smallskip
	\noindent
	Notice here that $(x,\rightSide)\notin E$ since $w_x$ and thus $v_x$ are $n$-light.

	We then see that
	$
		a\,\Rem_n(v_x) = \Rem_n(w_x),
	$
	and hence,
	\[
		\MonoidHom(\Rem_n(v_x)) = \MonoidHom(\Rem_n(w_x))-1.
	\]

	From the above observations, we see that
	\begin{align*}
		|\Affix(v_i,s)|
		 & \geq
		|\Affix(w_i,s)|
		\\&\geq
		\Lambda_{n-1}
		-
		(C - (\MonoidHom(\vec w) - 1))
		-
		(2k - n - |L_n(\vec w)|)\sigma_n
		-
		\sum_{j=1}^{x-1}\MonoidHom( \Rem_n(w_j) )
		\\&
		\phantom{\qquad\qquad\qquad\geq
			\Lambda_{n-1}
			-
			(C - (}
		-(\MonoidHom( \Rem_n(w_x) )-1)
		-\sum_{j={x+1}}^k\MonoidHom( \Rem_n(w_j) )
		\\&=
		\Lambda_{n-1}
		-
		(C - \MonoidHom(\vec v))
		-
		(2k - n - |L_n(\vec v)|)\sigma_n
		-
		\sum_{j=1}^{k}\MonoidHom( \Rem_n(v_j) )
	\end{align*}
	for each $(i,s)\in E$.
	Hence, we find that $E$ is an $n$-decomposition for $\vec v$.

	\medskip

	\noindent
	\underline{Conclusion}:
	\nopagebreak

	\smallskip
	\noindent
	We see that in all cases $E$ is an $n$-decomposition for $\vec v$.
\end{proof}

\begin{proposition}\label{prop:decomp2}
	Let $\vec w = (w_1,w_2,\ldots,w_k)$ and $\vec v = (v_1,v_2,\ldots,v_k)$ be $\mathcal W$-sentential tuples as in \cref{def:sentential-tuple}, and suppose that $x\in \{1,2,\ldots,k\}$ with
	\[
		(w_1,w_2,\ldots,w_k) = (v_1,v_2,\ldots,v_{x-1},b v_{x}, v_{x+1},\ldots,v_k).
	\]
	If $\vec w$ has an $n$-decomposition, then $\vec v$ has an $n$-decomposition.
	That is, given a $\mathcal W$-sentential tuple with an $n$-decomposition, if we still have a $\mathcal W$-sentential tuple after deleting an instance of the letter $b$ from the left-hand side of some component, then there exists an $n$-decomposition for the new tuple.
\end{proposition}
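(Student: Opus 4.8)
The plan is to show that the \emph{same} subset $E \subseteq \{1,\dots,k\}\times\{\leftSide,\rightSide\}$ witnessing an $n$-decomposition of $\vec w$ also witnesses an $n$-decomposition of $\vec v$. The point is that deleting a $b$ from the left of $w_x$ changes both the quantity $C-\MonoidHom(\vec v)$ and the quantity $\sum_{j}\MonoidHom(\Rem_n(v_j))$ appearing in the defining inequality \eqref{eq:main-equation} by exactly $+1$, and these two perturbations cancel, leaving the right-hand side of \eqref{eq:main-equation} literally unchanged. This parallels the easy case of \cref{prop:decomp1}, but is cleaner because no instance of the letter $a$ is removed.

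First I would record the elementary effects of replacing $w_x = b\,v_x$ by $v_x$ (all other components being equal). Since $w_x$ begins with $b$, we have $\Affix(w_x,\leftSide)=\varepsilon$; hence by \cref{lem:affix-bound-from-decomposition} the pair $(x,\leftSide)$ cannot lie in $E$, since that would force $|\Affix(w_x,\leftSide)|\geq 5\Lambda_n>0$. Prepending a $b$ creates no new maximal run of $a$'s and does not touch the right-hand end of $w_x$, so $\Affix(v_i,s)=\Affix(w_i,s)$ whenever $i\neq x$ or $s=\rightSide$; in particular $\Affix(v_i,s)=\Affix(w_i,s)$ for every $(i,s)\in E$. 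Likewise $w_x$ contains $a^{2\Lambda_n}$ as a factor if and only if $v_x$ does, so by \cref{def:heavy} the word $w_x$ is $n$-heavy iff $v_x$ is; thus $L_n(\vec v)=L_n(\vec w)$, and every $v_i$ with $(i,s)\in E$ is $n$-heavy.

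Next I would track $\Rem_n$. If $v_x$ is $n$-light then $\Rem_n(w_x)=w_x=b\,v_x=b\,\Rem_n(v_x)$. If $v_x$ is $n$-heavy then the first occurrence of $a^{2\Lambda_n}$ in $w_x=b\,v_x$ coincides with the first such occurrence in $v_x$ (the leading $b$ creates no earlier one), so $\Segment_n(w_x,\leftSide)=b\,\Segment_n(v_x,\leftSide)$ while $\Segment_n(w_x,\rightSide)=\Segment_n(v_x,\rightSide)$, whence $\Rem_n(w_x)=b\,\Rem_n(v_x)$ again (using \cref{def:segment,def:rem}). Since $\MonoidHom(b)=-1$ and $\Rem_n(v_i)=\Rem_n(w_i)$ for $i\neq x$, we obtain $\sum_{j}\MonoidHom(\Rem_n(v_j))=\sum_{j}\MonoidHom(\Rem_n(w_j))+1$, and similarly $\MonoidHom(\vec v)=\MonoidHom(\vec w)+1$.

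Finally I would substitute into \eqref{eq:main-equation}: for each $(i,s)\in E$ the left-hand side $|\Affix(v_i,s)|$ equals $|\Affix(w_i,s)|$, while on the right the term $-(C-\MonoidHom(\vec v))$ is larger by $1$ than $-(C-\MonoidHom(\vec w))$, the term $-\sum_{j}\MonoidHom(\Rem_n(v_j))$ is smaller by $1$ than $-\sum_{j}\MonoidHom(\Rem_n(w_j))$, and $(2k-n-|L_n(\vec v)|)\sigma_n=(2k-n-|L_n(\vec w)|)\sigma_n$; so the right-hand side for $\vec v$ is exactly the right-hand side for $\vec w$. Since that inequality holds for $\vec w$, it holds for $\vec v$, and $E$ is an $n$-decomposition of $\vec v$. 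The only step requiring genuine care is the $\Rem_n$ bookkeeping --- verifying that prepending $b$ neither destroys $n$-heaviness nor creates an earlier $a^{2\Lambda_n}$-block, so that $\Rem_n$ simply acquires a leading $b$ --- but this is routine and needs no numerical estimate on $m$, $C$, or $k$ beyond those already invoked.
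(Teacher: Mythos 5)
Your proposal is correct and follows essentially the same route as the paper's proof: keep the same set $E$, observe $(x,\leftSide)\notin E$, note that $\Affix$, heaviness, and $L_n$ are unchanged while $b\,\Rem_n(v_x)=\Rem_n(w_x)$, and then see that the $+1$ shift in $\MonoidHom(\vec v)$ exactly cancels the $+1$ shift in $\sum_j\MonoidHom(\Rem_n(v_j))$ on the right-hand side of \eqref{eq:main-equation}. Your extra bookkeeping (justifying $(x,\leftSide)\notin E$ via \cref{lem:affix-bound-from-decomposition} and checking the two cases of $\Rem_n$) only makes explicit what the paper states without elaboration.
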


\begin{proof}
	Let $E$ be an $n$-decomposition for $\vec w$.
	In this proof, we show that $E$ is also an $n$-decomposition for $\vec v$.
	Notice that since the leftmost letter of $w_x$ is $b$, we have $(x,\leftSide)\notin E$.
	In addition, removing an occurrence of $b$ from the left of $w_x$ does not change its $n$-heavy/$n$-light status, and does not affect $\Affix(w_x,\rightSide)$.
	Thus,
	\[
		L_n(\vec v) = L_n(\vec w)
		\qquad
		\text{and}
		\qquad
		\Affix(v_x,\rightSide) = \Affix(w_x,\rightSide).
	\]
	Notice that $\MonoidHom(\vec v) = \MonoidHom(\vec w)+1$,
	that $\Rem_n(v_i) = \Rem_n(w_i)$ for each $i\neq x$, and $\Affix(v_i,s) = \Affix(w_i,s)$ for each $i\neq x$ and $s\in \{\leftSide, \rightSide\}$.
	We also see that
	$
		b\, \Rem_n(v_x) = \Rem_n(w_x)
	$
	and thus
	\[
		\MonoidHom(\Rem_n(v_x)) = \MonoidHom(\Rem_n(w_x))+1.
	\]

	From our observations as above, we see that
	\begin{align*}
		|\Affix(v_i,s)|
		 & \geq
		|\Affix(w_i,s)|
		\\&\geq
		\Lambda_{n-1}
		-
		(C - (\MonoidHom(\vec w) + 1))
		-
		(2k - n - |L_n(\vec w)|)\sigma_n
		-
		\sum_{j=1}^{x-1}\MonoidHom( \Rem_n(w_j) )
		\\&
		\phantom{\qquad\qquad\geq
			\Lambda_{n-1}}
		-(\MonoidHom( \Rem_n(w_x) )+1)
		-\sum_{j={x+1}}^k\MonoidHom( \Rem_n(w_j) )
		\\&=
		\Lambda_{n-1}
		-
		(C - \MonoidHom(\vec v))
		-
		(2k - n - |L_n(\vec v)|)\sigma_n
		-
		\sum_{j=1}^{k}\MonoidHom( \Rem_n(v_j) )
	\end{align*}
	for each $(i,s)\in E$.
	Hence, we see that $E$ is an $n$-decomposition for $\vec v$.
\end{proof}

\begin{proposition}\label{prop:decomp3}
	Let $\vec w = (w_1,w_2,\ldots,w_k)$ and $\vec v = (v_1,v_2,\ldots,v_k)$ be $\mathcal W$-sentential tuples as in \cref{def:sentential-tuple}, and suppose that $x\in \{1,2,\ldots,k-1\}$ with $w_x = \varepsilon$, $w_{x+1} = v_{x}v_{x+1}$, and
	\[
		\vec w = (w_1,w_2,\ldots,w_k) = (v_1,v_2,\ldots,v_{x-1}, \varepsilon, v_x v_{x+1}, v_{x+2},\ldots,v_k).
	\]
	If $\vec w$ has an $n$-decomposition, then $\vec v$ has an $n'$-decomposition where $n'\geq n$.
	That is, given a $\mathcal W$-sentential tuple $\vec w$ with an $n$-decomposition, if we split one of its components into two adjacent components to obtain another $\mathcal W$-sentential tuple, then the resulting tuple will also have an $n'$-decomposition with $n'\geq n$.

	(Notice that if $\vec w$ and $\vec v$ are different vectors, then $\vec v$ will have one fewer component equal to $\varepsilon$ then $\vec w$. Thus, this proposition cannot be directly repeated arbitrarily many times.)
\end{proposition}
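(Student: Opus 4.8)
The plan is to follow the pattern of \cref{prop:decomp1}: fix a \emph{maximal} $n$-decomposition $E$ of $\vec w$ and manufacture a decomposition of $\vec v$ from it. I would first record the quantities that do not move on passing from $\vec w$ to $\vec v$: $\MonoidHom(\vec v)=\MonoidHom(\vec w)$ because $v_1\cdots v_k=w_1\cdots w_k$; for every $i\notin\{x,x+1\}$ we have $v_i=w_i$, hence equal affixes and equal $\Rem_n$-values; and $(x,\leftSide),(x,\rightSide)\notin E$ since $w_x=\varepsilon$ is $n$-light. So only the components $v_x,v_{x+1}$ with $v_xv_{x+1}=w_{x+1}$ need attention. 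If $w_{x+1}$ is $n$-light this is immediate: then $v_x,v_{x+1}$ are $n$-light too, $L_n(\vec v)=L_n(\vec w)$, every relevant $\Rem_n$ is the identity so $\sum_j\MonoidHom(\Rem_n(v_j))=\sum_j\MonoidHom(\Rem_n(w_j))$, and no pair of $E$ lies at position $x$ or $x+1$; hence \eqref{eq:main-equation} for $(\vec v,E)$ coincides with the one for $(\vec w,E)$, and $E$ is an $n$-decomposition of $\vec v$.

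The work is the case where $w_{x+1}$ is $n$-heavy, where I would apply \cref{lem:technical} to $w_{x+1}=v_xv_{x+1}$ and treat its five outcomes in two groups. In the ``affix'' outcomes (1)--(2), one of $\Affix(v_x,\rightSide)$, $\Affix(v_{x+1},\leftSide)$ has length at least $\Lambda_n$, and I would build an $(n{+}1)$-decomposition: keep the pairs of $E$ at positions $\neq x+1$ (their $\vec v$-affixes are unchanged and at least $5\Lambda_n$ by \cref{lem:affix-bound-from-decomposition}); relocate each pair $(x+1,s)\in E$ to position $x$, or leave it at $x+1$, according to which of $v_x,v_{x+1}$ inherits the bulk of the extremal $a$-block of $w_{x+1}$ on side $s$ --- here \cref{lem:maximal-form} is used, since $(x+1,s)\in E$ forces $\Segment_n(w_{x+1},s)=\varepsilon$, so that $a$-block has length at least $5\Lambda_n$ and survives, of length at least $\Lambda_n$, in whichever component receives it; and finally adjoin the new pair given by (1) or (2). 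Each component so named contains $a^{\Lambda_n}\supseteq a^{2\Lambda_{n+1}}$ (using $\Lambda_n=m\Lambda_{n+1}$ and $m\geq2$), so is $(n{+}1)$-heavy, and each named affix is at least $\Lambda_n$; provided the new pair is not already one of the relocated pairs, the set has size $n+1$ and \cref{lem:affix-bound-to-decomposition} makes it an $(n{+}1)$-decomposition of $\vec v$. In the ``$\Rem$-additive'' outcomes (3)--(5) I would instead keep $E$ with the same relocation of its $x+1$-pairs and verify \eqref{eq:main-equation} directly, as in \cref{prop:decomp1}: the only quantities that shift are $\sum_j\MonoidHom(\Rem_n(\cdot))$, which changes by $\MonoidHom(\Rem_n(v_x))+\MonoidHom(\Rem_n(v_{x+1}))-\MonoidHom(\Rem_n(w_{x+1}))$ (equal to $0$ in (4)--(5) and at least $-\sigma_n$ in (3)), and $|L_n|$, which drops by exactly one in (3) (both $v_x,v_{x+1}$ become $n$-heavy and the $\varepsilon$ at position $x$ is gone) and is unchanged in (4)--(5); so the $-(2k-n-|L_n|)\sigma_n$ term widens by exactly $\sigma_n$ in case (3), absorbing the slack, and the right-hand side of \eqref{eq:main-equation} for $\vec v$ is no larger than for $\vec w$.

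Verifying the relocated pairs is the delicate point, and the mechanism I would use throughout is a ``transfer'': since $E$ is an $n$-decomposition of $\vec w$, every pair $(x+1,s)\in E$ already satisfies $|\Affix(w_{x+1},s)|\geq R_{\vec w}$, where $R_{\vec w}$ denotes the right-hand side of \eqref{eq:main-equation} for $(\vec w,E,n)$; the only loss on splitting $\Affix(w_{x+1},s)$ between $v_x$ and $v_{x+1}$ is at most the length of the (all-$a$) piece handed to the other component, and the change $R_{\vec v}-R_{\vec w}$ is precisely the correction from the $\Rem_n$-difference computed above, so the inherited affix in $v_x$ or $v_{x+1}$ is again $\geq R_{\vec v}$. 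The one genuinely awkward configuration, which I expect to be the main obstacle, is the degenerate one in which $v_x$ or $v_{x+1}$ is a short all-$a$ word of length $<\Lambda_n$: then a relocated pair can collide with the new pair supplied by \cref{lem:technical}, so the constructed set has size only $n$ rather than $n+1$; there I would abandon \cref{lem:affix-bound-to-decomposition} and instead check \eqref{eq:main-equation} directly with $n'=n$, observing that in this configuration the heavy/light statuses together with the identities $\Segment_n(v_x,\rightSide)=\varepsilon$ and $\Segment_n(v_x,\leftSide)=\Segment_n(w_{x+1},\leftSide)$ pin the $\Rem_n$-difference down exactly (it equals the length of that short all-$a$ word), so the transfer argument once more closes the gap. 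Getting these degenerate sub-cases to line up --- rather than any of the main inequality manipulations --- is where the care is needed.
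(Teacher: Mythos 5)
Your plan is correct and follows essentially the same route as the paper's proof: dispose of the $n$-light case trivially, apply \cref{lem:technical} to the split $w_{x+1}=v_xv_{x+1}$, extend to an $(n{+}1)$-decomposition via \cref{lem:affix-bound-to-decomposition} when a long affix appears, and otherwise keep $E$ (relocating any pair at $x+1$ to $x$ as needed) while balancing the change in $\sum_j\MonoidHom(\Rem_n(\cdot))$ against the shift in $|L_n|$ or against the shortened affix. The paper merely organises the cases in the opposite order (first by which of $(x+1,\leftSide),(x+1,\rightSide)$ lie in $E$, then by the outcome of \cref{lem:technical} or the location of the split point), and the degenerate short all-$a$ configurations you flag are exactly its Cases 3.1, 5.1.2--5.1.3, 5.2.1 and 5.2.5, resolved by the same transfer computation you describe.
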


\begin{proof}
	Without loss of generality, we assume that $E$ is a maximal $n$-decomposition for the sentential tuple $\vec w$.
	We note here that $\MonoidHom(\vec v) = \MonoidHom(\vec w)$, and $v_i = w_i$ for each $i\notin\{x,x+1\}$.

	The remainder of this proof is separated into cases based on whether the word $w_{x+1}$ is $n$-heavy, and the memberships of its sides to the set $E$.

	\medskip

	\noindent
	\underline{Case 1}: The word $w_{x+1}$ is $n$-light.
	\nopagebreak

	\smallskip
	\noindent
	Since $w_{x+1} = v_x v_{x+1}$, we then see that both $v_x $ and $v_{x+1}$ are also $n$-light.
	Hence,
	\[
		L_n(\vec v) = L_n(\vec w),
		\qquad
		\MonoidHom(w_{x+1}) = \MonoidHom(v_x) + \MonoidHom(v_{x+1})
	\]
	and thus
	\begin{align*}
		\MonoidHom(\Rem_n(w_x))+\MonoidHom(\Rem_n(w_{x+1}))
		 & =
		\MonoidHom(\Rem_n(w_{x+1}))
		\\&=
		\MonoidHom(\Rem_n(v_x))+\MonoidHom(\Rem_n(v_{x+1})).
	\end{align*}
	Since $w_{x+1}$ is $n$-light and $w_x = \varepsilon$, it then follows that
	\[
		(x,\leftSide),(x,\rightSide),(x+1,\leftSide),(x+1,\rightSide) \notin E.
	\]
	Thus, we have $v_i = w_i$ if $(i,s)\in E$ for some $s\in \{\leftSide, \rightSide\}$.
	Hence, we find that
	\begin{align*}
		|\Affix(v_i,s)|
		 & =
		|\Affix(w_i,s)|
		\\&\geq
		\Lambda_{n-1}
		-
		(C - \MonoidHom(\vec w))
		-
		(2k - n - |L_n(\vec w)|)\sigma_n
		-
		\sum_{j=1}^{k}\MonoidHom( \Rem_n(w_j) )
		\\&=
		\Lambda_{n-1}
		-
		(C - \MonoidHom(\vec v))
		-
		(2k - n - |L_n(\vec v)|)\sigma_n
		-
		\sum_{j=1}^{k}\MonoidHom( \Rem_n(v_j) )
	\end{align*}
	for each $(i,s) \in E$. Thus, $E$ is an $n$-decomposition for $\vec v$.

	\medskip

	\noindent
	\underline{Case 2}: The word $w_{x+1}$ is $n$-heavy with $(x+1,\leftSide),(x+1,\rightSide) \notin E$.
	\nopagebreak

	\smallskip
	\noindent
	Notice that since $w_x = \varepsilon$, we have that
	\[
		(x,\leftSide),(x,\rightSide),(x+1,\leftSide),(x+1,\rightSide) \notin E.
	\]
	Thus, we have $v_i = w_i$ if $(i,s)\in E$ for some $s\in \{\leftSide, \rightSide\}$.

	From \cref{lem:technical}, we are in one of the following five cases.
	\begin{enumerate}[leftmargin=*]
		\item We have $|\Affix(v_x,\rightSide)| \geq \Lambda_n$.

		      From \cref{lem:affix-bound-from-decomposition,lem:affix-bound-to-decomposition}, we see that
		      $
			      E' = E\cup\{ (x,\rightSide) \}
		      $
		      is an $(n+1)$-decomposition for $\vec v$.

		\item We have $|\Affix(v_{x+1},\leftSide)| \geq \Lambda_n$.

		      From \cref{lem:affix-bound-from-decomposition,lem:affix-bound-to-decomposition}, we see that
		      $
			      E' = E\cup\{ (x+1,\leftSide) \}
		      $
		      is an $(n+1)$-decomposition for $\vec v$.

		\item We have that $v_x$ and $v_{x+1}$ are both $n$-heavy, and
		      \begin{equation}\label{eq:case2.3}
			      \MonoidHom(\Rem_n(v_x)) + \MonoidHom(\Rem_n(v_{x+1})) \geq \MonoidHom(\Rem_n(w_{x+1}))-\sigma_n.
		      \end{equation}
		      It then follows that $L_n(\vec v) = L_n(\vec w)\setminus \{x\}$, in particular, $|L_n(\vec v)| = |L_n(\vec w)|-1$.

		      Recall that $w_i = u_i$ if $(i,s)\in E$ for some $s\in \{\leftSide,\rightSide\}$.
		      Hence, for each $(i,s)\in E$, we have
		      \begin{align*}
			      |\Affix(v_i,s)|
			       & =
			      |\Affix(w_i,s)|
			      \\&\geq
			      \Lambda_{n-1}
			      -
			      (C - \MonoidHom(\vec w))
			      -
			      (2k - n - |L_n(\vec w)|)\sigma_n
			      -
			      \sum_{j=1}^{k}\MonoidHom( \Rem_n(w_j) )
			      \\&=
			      \Lambda_{n-1}
			      -
			      (C - \MonoidHom(\vec w))
			      -
			      (2k - n - (|L_n(\vec w)|-1))\sigma_n
			      +\sigma_n
			      -
			      \sum_{j=1}^{k}\MonoidHom( \Rem_n(w_j) )
			      \\&\geq
			      \Lambda_{n-1}
			      -
			      (C - \MonoidHom(\vec v))
			      -
			      (2k - n - |L_n(\vec v)|)\sigma_n
			      -
			      \sum_{j=1}^{k}\MonoidHom( \Rem_n(v_j) ).
		      \end{align*}
		      Notice that the last inequality, as above, follows from our case assumption in (\ref{eq:case2.3}).

		      We then see that $E$ is an $n$-decomposition for $\vec v$.

		\item We have that $v_x$ is $n$-light, $v_{x+1}$ is $n$-heavy, and \[\MonoidHom(\Rem_n(v_x)) + \MonoidHom(\Rem_n(v_{x+1})) = \MonoidHom(\Rem_n(w_{x+1})).\]

		      Notice that we have $L_n(\vec v) = L_n(\vec w)$, in particular, $|L_n(\vec v)| = |L_n(\vec w)|$.

		      Recall that $w_i = u_i$ if $(i,s)\in E$ for some $s\in \{\leftSide,\rightSide\}$.
		      Hence, for each $(i,s)\in E$, we have
		      \begin{align*}
			      |\Affix(v_i,s)|
			       & =
			      |\Affix(w_i,s)|
			      \\&\geq
			      \Lambda_{n-1}
			      -
			      (C - \MonoidHom(\vec w))
			      -
			      (2k - n - |L_n(\vec w)|)\sigma_n
			      -
			      \sum_{j=1}^{k}\MonoidHom( \Rem_n(w_j) )
			      \\&=
			      \Lambda_{n-1}
			      -
			      (C - \MonoidHom(\vec v))
			      -
			      (2k - n - |L_n(\vec v)|)\sigma_n
			      -
			      \sum_{j=1}^{k}\MonoidHom( \Rem_n(v_j) ).
		      \end{align*}
		      Notice that this follows from our case assumption since $\MonoidHom(\Rem_n(w_{x})) = \MonoidHom(\Rem_n(\varepsilon)) = 0$.

		      Thus, we see that $E$ is an $n$-decomposition for $\vec v$.

		\item We have that $v_x$ is $n$-heavy, $v_{x+1}$ is $n$-light, and \[\MonoidHom(\Rem_n(v_x)) + \MonoidHom(\Rem_n(v_{x+1})) = \MonoidHom(\Rem_n(w_{x+1})).\]

		      We have $L_n(\vec v) = (L_n(\vec w)\setminus\{x\})\cup\{x+1\}$, in particular, $|L_n(\vec v)| = |L_n(\vec w)|$.

		      Recall that $w_i = u_i$ if $(i,s)\in E$ for some $s\in \{\leftSide,\rightSide\}$.
		      Hence, for each $(i,s)\in E$, we have
		      \begin{align*}
			      |\Affix(v_i,s)|
			       & =
			      |\Affix(w_i,s)|
			      \\&\geq
			      \Lambda_{n-1}
			      -
			      (C - \MonoidHom(\vec w))
			      -
			      (2k - n - |L_n(\vec w)|)\sigma_n
			      -
			      \sum_{j=1}^{k}\MonoidHom( \Rem_n(w_j) )
			      \\&=
			      \Lambda_{n-1}
			      -
			      (C - \MonoidHom(\vec v))
			      -
			      (2k - n - |L_n(\vec v)|)\sigma_n
			      -
			      \sum_{j=1}^{k}\MonoidHom( \Rem_n(v_j) ).
		      \end{align*}
		      Notice that this follows from our case assumption since $\MonoidHom(\Rem_n(w_{x})) = \MonoidHom(\Rem_n(\varepsilon)) = 0$.

		      Thus, we see that $E$ is an $n$-decomposition for $\vec v$.
	\end{enumerate}

	\medskip

	\noindent
	\underline{Case 3}: The word $w_{x+1}$ is $n$-heavy with $(x+1,\leftSide) \in E$ and $(x+1,\rightSide) \notin E$.
	\nopagebreak

	\smallskip
	\noindent
	Notice that either $v_x$ is a prefix of $\Affix(w_{x+1},\leftSide)$, or $\Affix(w_{x+1},\leftSide)$ is a prefix of $v_x$.
	We consider these two cases separately as follows.

	\medskip

	\noindent
	\underline{Case 3.1}: The word $v_x$ is a prefix of $\Affix(w_{x+1},\leftSide)$.
	\nopagebreak

	\smallskip
	\noindent
	From \cref{lem:affix-bound-from-decomposition}, we see that $v_x$ and $v_{x+1}$ are words of the form
	\[
		v_x = a^p
		\qquad
		\text{and}
		\qquad
		v_{x+1} = a^q u
	\]
	with
	\[
		p+q = |\Affix(w_{x+1},\leftSide)|\geq 5\Lambda_n
		\qquad
		\text{and}
		\qquad
		u\in \Sigma^*.
	\]
	Notice that if $p\geq \Lambda_n$, then from \cref{lem:affix-bound-from-decomposition,lem:affix-bound-to-decomposition}, we see that
	\[
		E' = (E\setminus\{(x+1,\leftSide)\})\cup \{(x,\leftSide), (x,\rightSide)\}
	\]
	is an $(n+1)$-decomposition for $\vec v$.
	We consider the case where $p< \Lambda_n$ as follows.

	Since $p< \Lambda_n$, we see that $v_x$ is $n$-light, and that $v_{x+1}$ is $n$-heavy.
	From this, we see that $L_n(\vec v)= L_n(\vec w)$.
	Further, we then see that
	\[
		|\Affix_n(v_{x+1},\leftSide)| = |\Affix_n(w_{x+1}, \leftSide)| - p.
	\]
	Moreover, since $q\geq 2\Lambda_n$, we have $\Segment_n(v_{x+1},\leftSide) = \varepsilon$, and
	\[
		\MonoidHom(\Rem_n(v_{x}))+
		\MonoidHom(\Rem_n(v_{x+1}))
		=
		p + \MonoidHom(\Rem_n(w_{x+1})).
	\]
	We then see that
	\begin{align*}
		|\Affix(v_i,s)|
		 & \geq
		|\Affix(w_i,s)| - p
		\\&\geq
		\Lambda_{n-1}
		-
		(C - \MonoidHom(\vec w))
		\\&\phantom{\geq
			\Lambda_{n-1}
			-}
		-
		(2k - n - |L_n(\vec w)|)\sigma_n
		-
		\sum_{j=1}^{x}\MonoidHom( \Rem_n(w_j) )
		\\&\phantom{\geq
			\Lambda_{n-1}
			-}
		-(p+\MonoidHom( \Rem_n(w_{x+1}) ))
		-
		\sum_{j=x+2}^{k}\MonoidHom( \Rem_n(w_j) )
		\\&=
		\Lambda_{n-1}
		-
		(C - \MonoidHom(\vec v))
		-
		(2k - n - |L_n(\vec v)|)\sigma_n
		-
		\sum_{j=1}^{k}\MonoidHom( \Rem_n(v_j) )
	\end{align*}
	for each $(i,s)\in E$.
	Thus, we see that $E$ is an $n$-decomposition for $\vec v$.

	\medskip

	\noindent
	\underline{Case 3.2}: The word $\Affix(w_{x+1},\leftSide)$ is a prefix of $v_x$.
	\nopagebreak

	\smallskip
	\noindent
	In this case, the words $v_x$ and $v_{x+1}$ are of the form
	\[
		v_x
		=
		\Affix(w_x,\leftSide)\,u
		\qquad
		\text{and}
		\qquad
		v_{x+1}\in \Sigma^*
	\]
	where $u\in \Sigma^*$.
	From \cref{lem:affix-bound-from-decomposition}, $|\Affix(w_x,\leftSide)|\geq 5\Lambda_n$ and thus $v_x$ is $n$-heavy.

	We may apply \cref{lem:technical} to the factorisation $w_{x+1} = v_{x}v_{x+1}$ to obtain the following five cases.
	\begin{enumerate}[leftmargin=*]
		\item We have $|\Affix(v_x,\rightSide)| \geq \Lambda_n$.

		      Then, from \cref{lem:affix-bound-from-decomposition,lem:affix-bound-to-decomposition}, we see that
		      \[
			      E' = (E\setminus\{(x+1,\leftSide)\})\cup\{ (x,\leftSide), (x,\rightSide) \}
		      \]
		      is an $(n+1)$-decomposition for $\vec v$.

		\item We have $|\Affix(v_{x+1},\leftSide)| \geq \Lambda_n$.

		      Then, from \cref{lem:affix-bound-from-decomposition,lem:affix-bound-to-decomposition}, we see that
		      \[
			      E' = E\cup\{ (x,\leftSide) \}
		      \]
		      is an $(n+1)$-decomposition for $\vec v$.

		\item We have that $v_x$ and $v_{x+1}$ are both $n$-heavy, and \[\MonoidHom(\Rem_n(v_x)) + \MonoidHom(\Rem_n(v_{x+1})) \geq \MonoidHom(\Rem_n(w_{x+1}))-\sigma_n.\]

		      We then see that $L_n(\vec v) = L_n(\vec w)\setminus \{x\}$, in particular, $|L_n(\vec v)| = |L_n(\vec w)|-1$.
		      Let
		      \[
			      E' = (E\setminus\{(x+1,\leftSide)\})\cup\{(x,\leftSide)\}
		      \]
		      and notice that
		      \begin{align*}
			      |\Affix(v_i,s)|
			       & \geq
			      \Lambda_{n-1}
			      -
			      (C - \MonoidHom(\vec w))
			      -
			      (2k - n - |L_n(\vec w)|)\sigma_n
			      -
			      \sum_{j=1}^{k}\MonoidHom( \Rem_n(w_j) )
			      \\&=
			      \Lambda_{n-1}
			      -
			      (C - \MonoidHom(\vec w))
			      -
			      (2k - n - (|L_n(\vec w)|-1))\sigma_n
			      +\sigma_n
			      -
			      \sum_{j=1}^{k}\MonoidHom( \Rem_n(w_j) )
			      \\&\geq
			      \Lambda_{n-1}
			      -
			      (C - \MonoidHom(\vec v))
			      -
			      (2k - n - |L_n(\vec v)|)\sigma_n
			      -
			      \sum_{j=1}^{k}\MonoidHom( \Rem_n(v_j) )
		      \end{align*}
		      for each $(i,s)\in E'$.
		      Thus, we see that $E'$ is an $n$-decomposition for $\vec v$.

		\item We have that $v_x$ is $n$-light, $v_{x+1}$ is $n$-heavy, and \[\MonoidHom(\Rem_n(v_x)) + \MonoidHom(\Rem_n(v_{x+1})) = \MonoidHom(\Rem_n(w_{x+1})).\]
		      This case does not need to be considered as it contradicts our earlier case assumption that $v_x$ is $n$-heavy.

		\item We have that $v_x$ is $n$-heavy, $v_{x+1}$ is $n$-light, and \[\MonoidHom(\Rem_n(v_x)) + \MonoidHom(\Rem_n(v_{x+1})) = \MonoidHom(\Rem_n(w_{x+1})).\]

		      We have $L_n(\vec v) = (L_n(\vec w)\setminus\{x\})\cup\{x+1\}$, in particular, $|L_n(\vec v)| = |L_n(\vec w)|$.
		      Let
		      \[
			      E' = (E\setminus\{(x+1,\leftSide)\})\cup\{(x,\leftSide)\}
		      \]
		      and notice that
		      \begin{align*}
			      |\Affix(v_i,s)|
			       & \geq
			      \Lambda_{n-1}
			      -
			      (C - \MonoidHom(\vec w))
			      -
			      (2k - n - |L_n(\vec w)|)\sigma_n
			      -
			      \sum_{j=1}^{k}\MonoidHom( \Rem_n(w_j) )
			      \\&=
			      \Lambda_{n-1}
			      -
			      (C - \MonoidHom(\vec v))
			      -
			      (2k - n - |L_n(\vec v)|)\sigma_n
			      -
			      \sum_{j=1}^{k}\MonoidHom( \Rem_n(v_j) )
		      \end{align*}
		      for each $(i,s)\in E'$.
		      Thus, we see that $E'$ is an $n$-decomposition for $\vec v$.
	\end{enumerate}

	\medskip

	\noindent
	\underline{Case 4}: The word $w_{x+1}$ is $n$-heavy with $(x+1,\leftSide) \notin E$ and $(x+1,\rightSide) \in E$.
	\nopagebreak

	\smallskip
	\noindent
	The proof of this case is symmetric to the proof of Case~3 as above.

	\medskip

	\noindent
	\underline{Case 5}: The word $w_{x+1}$ is $n$-heavy with $(x+1,\leftSide), (x+1,\rightSide) \in E$.
	\nopagebreak

	\smallskip
	\noindent
	This case is separated into two subcases:
	\begin{itemize}
		\item[(5.1)] $w_{x+1}$ does not contain the letter $b$, in particular, \[w_{x+1} = \Affix(w_{x+1},\leftSide) = \Affix(w_{x+1},\rightSide)\text{; and}\]
		\item[(5.2)] $w_{x+1}$ contains at least one $b$, in particular, \[w_{x+1} = \Affix(w_{x+1},\leftSide)\, u\, \Affix(w_{x+1},\rightSide)\] where $u\in \Sigma^*$ with $|u|\geq 1$.
	\end{itemize}
	We consider these cases as follows.

	\medskip

	\noindent
	\underline{Case 5.1}: $w_{x+1} = \Affix(w_{x+1},\leftSide) = \Affix(w_{x+1},\rightSide)$.
	\nopagebreak

	\smallskip
	\noindent
	Here we see that
	\[
		v_{x} = a^p
		\qquad
		\text{and}
		\qquad
		v_{x+1} = a^q
	\]
	where $p+q = |w_{x+1}|$.
	We then separate this into three subcases based on the value of $p$ as follows.

	\medskip

	\noindent
	\underline{Case 5.1.1}: $\Lambda_n \leq p\leq |w_{x+1}|-\Lambda_n$ and thus $\Lambda_n \leq q\leq |w_{x+1}|-\Lambda_n$.
	\nopagebreak

	\smallskip
	\noindent
	From \cref{lem:affix-bound-from-decomposition,lem:affix-bound-to-decomposition}, we see that
	\[
		E' = E\cup \{(x,\leftSide)\}
		\qquad
		\text{and}
		\qquad
		E'' = E\cup \{(x,\rightSide)\}
	\]
	are both $(n+1)$-decompositions of $\vec v$.

	\medskip

	\noindent
	\underline{Case 5.1.2}: $p< \Lambda_n$ and thus $q> |w_{x+1}|-\Lambda_n$.
	\nopagebreak

	\smallskip
	\noindent
	We then see that $v_x$ is $n$-light.
	Moreover, from \cref{lem:affix-bound-from-decomposition}, we see that $|w_{x+1}|\geq 5\Lambda_n$ and thus $v_{x+1}$ is $n$-heavy.
	We then notice that $L_n(\vec v) = L_n(\vec w)$,
	\[
		\MonoidHom(\Rem_n(v_{x})) = p
		\qquad
		\text{and}
		\qquad
		\MonoidHom(\Rem_n(v_{x+1})) = \MonoidHom(\Rem_n(w_{x+1})) = 0
	\]
	For each $(i,s) \in E$, we then see that
	\begin{align*}
		|\Affix(v_i,s)| & \geq |\Affix(w_i,s)| - p
		\\
		                & \geq
		\Lambda_{n-1}
		-
		(C - \MonoidHom(\vec w))
		-
		(2k - n - |L_n(\vec w)|)\sigma_n
		-p-
		\sum_{j=1}^{k}\MonoidHom( \Rem_n(w_j) )
		\\&=
		\Lambda_{n-1}
		-
		(C - \MonoidHom(\vec v))
		-
		(2k - n - |L_n(\vec v)|)\sigma_n
		-
		\sum_{j=1}^{k}\MonoidHom( \Rem_n(v_j) ).
	\end{align*}
	Thus, $E$ is an $n$-decomposition for $\vec v$.

	\medskip

	\noindent
	\underline{Case 5.1.3}: $p> |w_{x+1}| - \Lambda_n$ and thus $q< \Lambda_n$.
	\nopagebreak

	\smallskip
	\noindent
	We see that $v_{x+1}$ is $n$-light.
	Moreover, from \cref{lem:affix-bound-from-decomposition} we see that $|w_{x+1}|\geq 5\Lambda_n$ and thus $v_{x}$ is $n$-heavy.
	We notice that $L_n(\vec v) = (L_n(\vec w)\setminus\{x\})\cup\{x+1\}$, and thus
	\[
		|L_n(\vec v)| = |L_n(\vec w)|.
	\]
	Moreover,
	\[
		\MonoidHom(\Rem_n(v_{x})) = \MonoidHom(\Rem_n(w_{x+1}))=0
		\qquad
		\text{and}
		\qquad
		\MonoidHom(\Rem_n(v_{x+1})) = q
	\]
	For each $(i,s) \in E\setminus\{(x+1,\leftSide),(x+1,\rightSide)\}$, we then see that
	\begin{align*}
		|\Affix(v_i,s)|
		 & \geq |\Affix(w_i,s)|      \\
		 & \geq |\Affix(w_i,s)|  - q
		\\
		 & \geq
		\Lambda_{n-1}
		-
		(C - \MonoidHom(\vec w))
		-
		(2k - n - |L_n(\vec w)|)\sigma_n
		-q-
		\sum_{j=1}^{k}\MonoidHom( \Rem_n(w_j) )
		\\&=
		\Lambda_{n-1}
		-
		(C - \MonoidHom(\vec v))
		-
		(2k - n - |L_n(\vec v)|)\sigma_n
		-
		\sum_{j=1}^{k}\MonoidHom( \Rem_n(v_j) ).
	\end{align*}
	Moreover, we see that for each $s\in \{\leftSide,\rightSide\}$,
	\begin{align*}
		|\Affix(v_{x},s)| & = |\Affix(w_{x+1},s)| - q
		\\
		                  & \geq
		\Lambda_{n-1}
		-
		(C - \MonoidHom(\vec w))
		-
		(2k - n - |L_n(\vec w)|)\sigma_n
		-q-
		\sum_{j=1}^{k}\MonoidHom( \Rem_n(w_j) )
		\\&r
		\Lambda_{n-1}
		-
		(C - \MonoidHom(\vec v))
		-
		(2k - n - |L_n(\vec v)|)\sigma_n
		-
		\sum_{j=1}^{k}\MonoidHom( \Rem_n(v_j) ).
	\end{align*}
	Thus,
	\[
		E' = (E\setminus\{(x+1,\leftSide),(x+1,\rightSide)\})\cup \{(x,\leftSide), (x,\rightSide)\}
	\]
	is an $n$-decomposition for $\vec v$.

	\medskip

	\noindent
	\underline{Case 5.2}: $w_{x+1} = \Affix(w_{x+1},\leftSide)\, u\, \Affix(w_{x+1},\rightSide)$ where $u\in \Sigma^*$ with $|u|\geq 1$.
	\nopagebreak

	\smallskip
	\noindent
	We separate this case into five subcases based on the length of $v_x$ as follows.

	\medskip

	\noindent
	\underline{Case 5.2.1}: $|v_x|< 2\Lambda_n$.
	\nopagebreak

	\smallskip
	\noindent
	We then see that $v_x$ is $n$-light, $v_{x+1}$ is $n$-heavy and thus $L_n(\vec v) = L_n(\vec w)$.

	From \cref{lem:affix-bound-from-decomposition}, we see that $v_x$ and $v_{x+1}$ are of the form
	\[
		v_x = a^p
		\qquad
		\text{and}
		\qquad
		v_{x+1} = a^q\, u\, \Affix(w_{x+1},\rightSide)
	\]
	where $p+q = |\Affix(w_{x+1},\leftSide)|\geq 5\Lambda_n$, $p<2\Lambda_n$ and $q \geq 3\Lambda_n$.
	We then see that
	\[
		\MonoidHom(\Rem_n(v_x))) = p
		\qquad
		\text{and}
		\qquad
		\MonoidHom(\Rem_n(v_{x+1})) = \MonoidHom(\Rem_n(w_{x+1})) = 0.
	\]
	For each $(i,s)\in E$, we then have
	\begin{align*}
		|\Affix(v_i,s)| & \geq |\Affix(w_i,s)| - p
		\\
		                & \geq
		\Lambda_{n-1}
		-
		(C - \MonoidHom(\vec w))
		-
		(2k - n - |L_n(\vec w)|)\sigma_n
		-p-
		\sum_{j=1}^{k}\MonoidHom( \Rem_n(w_j) )
		\\&=
		\Lambda_{n-1}
		-
		(C - \MonoidHom(\vec v))
		-
		(2k - n - |L_n(\vec v)|)\sigma_n
		-
		\sum_{j=1}^{k}\MonoidHom( \Rem_n(v_j) ).
	\end{align*}
	Notice that this follows since $\MonoidHom(\Rem_n(w_x)))=\MonoidHom(\Rem_n(w_{x+1}))=0$.

	Thus, we see that $E$ is an $n$-decomposition for $\vec v$.

	\medskip

	\noindent
	\underline{Case 5.2.2}: $2\Lambda_n\leq |v_x| \leq |\Affix(w_{x+1},\leftSide)|$.
	\nopagebreak

	\smallskip
	\noindent
	In this case we see that
	\[
		v_x = a^p
		\qquad
		\text{and}
		\qquad
		v_{x+1} = a^q\,u\, \Affix(w_{x+1},\rightSide)
	\]
	for some $u\in \Sigma^*$ where $p+q = |\Affix(w_{x+1},\leftSide)|$ and $p\geq 2\Lambda_n$.

	From \cref{lem:affix-bound-from-decomposition,lem:affix-bound-to-decomposition}, we see that
	\[
		E' = (E\setminus\{(x+1,\leftSide)\})\cup\{(x,\leftSide),(x,\rightSide)\}
	\]
	is an $(n+1)$-decomposition of $\vec v$.

	\medskip

	\noindent
	\underline{Case 5.2.3}: $|\Affix(w_{x+1},\leftSide)|\leq |v_x| \leq |w_{x+1}| - |\Affix(w_{x+1},\rightSide)|$.
	\nopagebreak

	\smallskip
	\noindent
	We notice then that
	\[
		v_x = \Affix(w_{x+1},\leftSide)\,u
		\qquad
		\text{and}
		\qquad
		v_{x+1}=u'\,\Affix(w_{x+1},\rightSide)
	\]
	for some $u,u'\in \Sigma^*$. Thus, both $v_x$ and $v_{x+1}$ are $n$-heavy.

	From \cref{lem:technical}, we are in one of the following five cases.
	\begin{enumerate}[leftmargin=*]
		\item We have $|\Affix(v_x,\rightSide)| \geq \Lambda_n$.

		      Then, from \cref{lem:affix-bound-from-decomposition,lem:affix-bound-to-decomposition}, we see that
		      \[
			      E' = (E\setminus\{(x+1,\leftSide)\})\cup\{ (x,\leftSide), (x,\rightSide) \}
		      \]
		      is an $(n+1)$-decomposition for $\vec v$.

		\item We have $|\Affix(v_{x+1},\leftSide)| \geq \Lambda_n$.

		      Then, from \cref{lem:affix-bound-from-decomposition,lem:affix-bound-to-decomposition}, we see that
		      \[
			      E' = E\cup\{ (x,\leftSide) \}
		      \]
		      is an $(n+1)$-decomposition for $\vec v$.

		\item We have that $v_x$ and $v_{x+1}$ are both $n$-heavy, and \[\MonoidHom(\Rem_n(v_x)) + \MonoidHom(\Rem_n(v_{x+1})) \geq \MonoidHom(\Rem_n(w_{x+1}))-\sigma_n.\]

		      We then see that $L_n(\vec v) = L_n(\vec w)\setminus \{x\}$, in particular, $|L_n(\vec v)| = |L_n(\vec w)|-1$.
		      Let
		      \[
			      E' = (E\setminus\{(x+1,\leftSide)\})\cup\{(x,\leftSide)\}
		      \]
		      and notice that
		      \begin{align*}
			      |\Affix(v_i,s)|
			       & \geq
			      \Lambda_{n-1}
			      -
			      (C - \MonoidHom(\vec w))
			      -
			      (2k - n - (|L_n(\vec w)|-1))\sigma_n
			      +\sigma_n
			      -
			      \sum_{j=1}^{k}\MonoidHom( \Rem_n(w_j) )
			      \\&\geq
			      \Lambda_{n-1}
			      -
			      (C - \MonoidHom(\vec v))
			      -
			      (2k - n - |L_n(\vec v)|)\sigma_n
			      -
			      \sum_{j=1}^{k}\MonoidHom( \Rem_n(v_j) )
		      \end{align*}
		      for each $(i,s)\in E'$.
		      Thus, we see that $E'$ is an $n$-decomposition for $\vec v$.

		\item We have that $v_x$ is $n$-light, $v_{x+1}$ is $n$-heavy, and \[\MonoidHom(\Rem_n(v_x)) + \MonoidHom(\Rem_n(v_{x+1})) = \MonoidHom(\Rem_n(w_{x+1})).\]
		      This case does not need to be considered as it contradicts our case assumption $v_x$ is $n$-heavy.

		\item We have that $v_x$ is $n$-heavy, $v_{x+1}$ is $n$-light, and \[\MonoidHom(\Rem_n(v_x)) + \MonoidHom(\Rem_n(v_{x+1})) = \MonoidHom(\Rem_n(w_{x+1})).\]
		      This case does not need to be considered as it contradicts our case assumption $v_{x+1}$ is $n$-heavy.
	\end{enumerate}

	\medskip

	\noindent
	\underline{Case 5.2.4}: $|w_{x+1}| - |\Affix(w_{x+1},\rightSide)|\leq |v_x| \leq |w_{x+1}| - 2\Lambda_n$.
	\nopagebreak

	\smallskip
	\noindent
	In this case we see that
	\[
		v_x = \Affix(w_{x+1},\leftSide)\,u\,a^p
		\qquad
		\text{and}
		\qquad
		v_{x+1} = a^q
	\]
	for some $u\in \Sigma^*$ where $p+q = |\Affix(w_{x+1},\leftSide)|$ and $q\geq 2\Lambda_n$.

	From \cref{lem:affix-bound-from-decomposition,lem:affix-bound-to-decomposition}, we see that
	\[
		E' = E\cup\{(x,\leftSide)\}
	\]
	is an $(n+1)$-decomposition of $\vec v$.

	\medskip

	\noindent
	\underline{Case 5.2.5}: $|v_x| > |w_{x+1}| - 2\Lambda_n$.
	\nopagebreak

	\smallskip
	\noindent
	We then see that $v_x$ is $n$-heavy, $v_{x+1}$ is $n$-light and thus \[L_n(\vec v) = (L_n(\vec w)\setminus\{x\})\cup \{x+1\},\]
	in particular, this means that $|L_n(\vec v)| = |L_n(\vec w)|$.

	From \cref{lem:affix-bound-from-decomposition}, we see that $v_x$ and $v_{x+1}$ are of the form
	\[
		v_x = \Affix(w_{x+1},\leftSide) \, u\, a^p
		\qquad
		\text{and}
		\qquad
		v_{x+1} = a^q
	\]
	where $p+q = |\Affix(w_{x+1},\leftSide)|\geq 5\Lambda_n$, $p> 3\Lambda_n$ and $q < 2\Lambda_n$.
	We then see that
	\[
		\MonoidHom(\Rem_n(v_x))) = \MonoidHom(\Rem_n(w_{x+1})) = 0
		\qquad
		\text{and}
		\qquad
		\MonoidHom(\Rem_n(v_{x+1})) = q
	\]
	For each $(i,s)\in E\setminus\{(x+1,\leftSide),(x+1,\rightSide)\}$, we then have
	\begin{align*}
		|\Affix(v_i,s)| & = |\Affix(w_i,s)| \\ &\geq |\Affix(w_i,s)| - q
		\\
		                & \geq
		\Lambda_{n-1}
		-
		(C - \MonoidHom(\vec w))
		-
		(2k - n - |L_n(\vec w)|)\sigma_n
		-q-
		\sum_{j=1}^{k}\MonoidHom( \Rem_n(w_j) )
		\\&=
		\Lambda_{n-1}
		-
		(C - \MonoidHom(\vec v))
		-
		(2k - n - |L_n(\vec v)|)\sigma_n
		-
		\sum_{j=1}^{k}\MonoidHom( \Rem_n(v_j) ).
	\end{align*}
	Moreover, for each $s\in \{\leftSide,\rightSide\}$, we see that
	\begin{align*}
		|\Affix(v_{x},s)| & \geq |\Affix(w_{x+1},s)| - q
		\\
		                  & \geq
		\Lambda_{n-1}
		-
		(C - \MonoidHom(\vec w))
		-
		(2k - n - |L_n(\vec w)|)\sigma_n
		-q-
		\sum_{j=1}^{k}\MonoidHom( \Rem_n(w_j) )
		\\&=
		\Lambda_{n-1}
		-
		(C - \MonoidHom(\vec v))
		-
		(2k - n - |L_n(\vec v)|)\sigma_n
		-
		\sum_{j=1}^{k}\MonoidHom( \Rem_n(v_j) ).
	\end{align*}
	Thus,
	\[
		E' = (E\setminus\{(x+1,\leftSide),(x+1,\rightSide)\})\cup\{(x,\leftSide),(x,\rightSide)\}
	\]
	is an $n$-decomposition for $\vec v$.

	\medskip

	\noindent
	\underline{Conclusion}:
	\nopagebreak

	\smallskip
	\noindent
	In all cases, if $\vec w$ has a maximal $n$-decomposition, then we can either construct an $n$-decomposition or construct an $(n+1)$-decomposition for $\vec v$.
\end{proof}

\subsection{Mirroring decompositions}\label{sec:main-proof/mirror}
We introduce $\Reverse\colon \Sigma^*\to\Sigma^*$ such that
$
	\Reverse(w) = w_n \cdots w_2 w_1
$
for each $w = w_1 w_2\cdots w_n \in \Sigma^*$.
We then extend this function to operate on tuples of words as follows.
We define $\Reverse\colon (\Sigma^*)^k\to (\Sigma^*)^k$ such that for each $\vec w = (w_1,w_2,\ldots,w_k)\in (\Sigma^*)^k$,
\[
	\Reverse(\vec w)
	=
	(\Reverse(w_k),\ldots, \Reverse(w_2),\Reverse(w_1))
\]
We define $\Reverse \colon \mathcal P (\{1,2,\ldots,k\}\times \{\leftSide,\rightSide\})\to \mathcal P (\{1,2,\ldots,k\}\times \{\leftSide,\rightSide\})$ on decompositions as follows.
Suppose that $E\subseteq \{1,2,\ldots,k\}\times\{\leftSide,\rightSide\}$ is an $n$-decomposition of $\vec w$.
We then define $\Reverse(E)$ such that
\begin{itemize}
	\item $(i,\leftSide) \in \Reverse(E)$ if and only if $(k+1-i,\rightSide)\in E$; and
	\item $(i,\rightSide) \in \Reverse(E)$ if and only if $(k+1-i,\leftSide)\in E$.
\end{itemize}
We then see that $\Reverse(E)$ is an $n$-decomposition for the word vector $\Reverse(\vec w)$.

Notice from the definition of the word $\mathcal W$ in \cref{def:counterexample} that $\Reverse(\mathcal W)=\mathcal W$.
Thus, from the definition of $\mathcal W$-sentential tuple in \cref{def:sentential-tuple}, we see that if $\vec w$ is a $\mathcal W$-sentential tuple, then $\Reverse(\vec w)$ is also a $\mathcal W$-sentential tuple.
Finally, notice that in each of the above usages of $\Reverse$, the function $\Reverse$ is an involution.
Using these maps, we may now derive the following proposition which is a generalisation of \cref{prop:decomp1,prop:decomp2,prop:decomp3}.

\begin{proposition}\label{prop:decomp-full}
	Let $\vec w = (w_1,w_2,\ldots,w_k)$ and $\vec v = (v_1,v_2,\ldots,v_k)$ be $\mathcal W$-sentential tuples as in \cref{def:sentential-tuple}, and suppose that $\vec w$ has one of the four forms
	\begin{align}
		\vec w=(w_1,w_2,\ldots,w_k) & = (v_1,v_2,\ldots,v_{x-1},a v_{x}, v_{x+1},\ldots,v_k),\label{eq:decomp-full/1}
		\\\vec w=(w_1,w_2,\ldots,w_k) &= (v_1,v_2,\ldots,v_{x-1},v_{x}a, v_{x+1},\ldots,v_k),\label{eq:decomp-full/2}
		\\\vec w=(w_1,w_2,\ldots,w_k) &= (v_1,v_2,\ldots,v_{x-1},b v_{x}, v_{x+1},\ldots,v_k),\label{eq:decomp-full/3}
		\\\vec w=(w_1,w_2,\ldots,w_k) &= (v_1,v_2,\ldots,v_{x-1},v_{x}b, v_{x+1},\ldots,v_k)\label{eq:decomp-full/4}
	\end{align}
	for some $x\in \{1,2,\ldots, k\}$, that is, $\vec v$ is obtained from $\vec w$ by deleting an $a$ or $b$ from the left or right-hand side of some component of $\vec w$; or $\vec w$ has one of the following two forms
	\begin{align}
		\vec w = (w_1,w_2, \ldots,w_k) & = (v_1,\ldots,v_{x-1}, \varepsilon, v_x v_{x+1}, v_{x+2},\ldots,v_k),\label{eq:decomp-full/5}
		\\\vec w = (w_1,w_2,\ldots,w_k) &= (v_1,\ldots, v_{x-1},v_x v_{x+1},\varepsilon,  v_{x+2},\ldots,v_k)\label{eq:decomp-full/6}
	\end{align}
	for some $x\in \{1,2,\ldots,k-1\}$, that is, $\vec v$ is a vector for which $(w_x,w_{x+1})\in \{ (v_xv_{x+1},\varepsilon), (\varepsilon, v_x v_{x+1}) \}$ and $w_{i} = v_i$ for each $i\notin \{x,x+1\}$.
	If the tuple $\vec w$ has an $n$-decomposition, then $\vec v$ has an $n'$-decomposition for some $n'\geq n$.
	(Compare the above cases with the grammar rules in \cref{def:rule-accept,def:rule-insertion,def:rule-merge}.)
\end{proposition}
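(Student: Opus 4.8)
The plan is to derive all six cases from the three already established in \cref{prop:decomp1,prop:decomp2,prop:decomp3} by means of the mirror involution $\Reverse$. First note that cases \eqref{eq:decomp-full/1}, \eqref{eq:decomp-full/3} and \eqref{eq:decomp-full/5} are exactly the statements of \cref{prop:decomp1}, \cref{prop:decomp2} and \cref{prop:decomp3} respectively, so for these there is nothing to do. For the remaining cases \eqref{eq:decomp-full/2}, \eqref{eq:decomp-full/4} and \eqref{eq:decomp-full/6}, the idea is to transport the whole configuration through $\Reverse$, apply the corresponding left-hand version, and transport back.

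Concretely, recall from the discussion preceding this proposition that $\Reverse$ is an involution on $k$-tuples, that it fixes $\mathcal W$ and hence sends $\mathcal W$-sentential tuples to $\mathcal W$-sentential tuples, and that it maps an $n$-decomposition of $\vec w$ to an $n$-decomposition of $\Reverse(\vec w)$. The point I would verify is that $\Reverse$ converts a right-hand operation into the matching left-hand one. If $\vec w$ is obtained from $\vec v$ by appending a letter $c\in\{a,b\}$ to the right of the $x$-th component (cases \eqref{eq:decomp-full/2}, \eqref{eq:decomp-full/4}), then $\Reverse(v_xc)=c\,\Reverse(v_x)$ and the $x$-th component becomes the $(k+1-x)$-th, so $\Reverse(\vec w)$ is obtained from $\Reverse(\vec v)$ by prepending $c$ to the left of its $(k+1-x)$-th component, i.e.\ it has the form \eqref{eq:decomp-full/1} or \eqref{eq:decomp-full/3}. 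Likewise, if $\vec w$ is obtained from $\vec v$ by the right split $(w_x,w_{x+1})=(v_xv_{x+1},\varepsilon)$ (case \eqref{eq:decomp-full/6}), then setting $y=k-x\in\{1,\ldots,k-1\}$ one has $\Reverse(\vec w)_y=\varepsilon$ and $\Reverse(\vec w)_{y+1}=\Reverse(v_{x+1})\Reverse(v_x)=\Reverse(\vec v)_y\,\Reverse(\vec v)_{y+1}$, so $\Reverse(\vec w)$ has the form \eqref{eq:decomp-full/5} relative to $\Reverse(\vec v)$.

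With this observation the argument is immediate: given an $n$-decomposition $E$ of $\vec w$ and $\vec v$ obtained by one of the right-hand operations, $\Reverse(E)$ is an $n$-decomposition of $\Reverse(\vec w)$, which is related to the $\mathcal W$-sentential tuple $\Reverse(\vec v)$ by the corresponding left-hand operation; applying \cref{prop:decomp1}, \cref{prop:decomp2} or \cref{prop:decomp3} (according to $c=a$, $c=b$, or a split) produces an $n'$-decomposition $E'$ of $\Reverse(\vec v)$ with $n'\geq n$; and then $\Reverse(E')$ is an $n'$-decomposition of $\Reverse(\Reverse(\vec v))=\vec v$. I do not anticipate a real obstacle here — the substance lies entirely in \cref{prop:decomp1,prop:decomp2,prop:decomp3} — and the only thing needing care is the routine bookkeeping of how the index $x$ and the side-markers $\leftSide,\rightSide$ are permuted by $\Reverse$.
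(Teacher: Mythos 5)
Your proposal is correct and follows essentially the same route as the paper: cases \eqref{eq:decomp-full/1}, \eqref{eq:decomp-full/3}, \eqref{eq:decomp-full/5} are exactly \cref{prop:decomp1,prop:decomp2,prop:decomp3}, and the remaining cases are obtained by conjugating with the involution $\Reverse$, which sends $\mathcal W$-sentential tuples and their $n$-decompositions to the mirrored ones. Your index bookkeeping for the split case (showing $\Reverse$ turns form \eqref{eq:decomp-full/6} into form \eqref{eq:decomp-full/5} at position $y=k-x$) is accurate, and the paper's proof is the same argument stated slightly more tersely.
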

\begin{proof}
	We see that cases (\ref{eq:decomp-full/1}), (\ref{eq:decomp-full/3}) and (\ref{eq:decomp-full/5}) follow from \cref{prop:decomp1,prop:decomp2,prop:decomp3}, respectively.
	We consider the remaining cases as follows.

	\begin{itemize}
		\item
		      Suppose our vectors $\vec w$ and $\vec v$ are in (\ref{eq:decomp-full/2}).
		      Thus, the pair of vectors $\Reverse(\vec w)$ and $\Reverse(\vec v)$ are related as in (\ref{eq:decomp-full/1}), and $\Reverse(\vec w)$ has an $n$-decomposition.
		      From \cref{prop:decomp1}, we obtain an $n'$-decomposition $E$, with $n'\geq n$, for $\Reverse(\vec v)$.
		      Thus, $\Reverse(E)$ is an $n'$-decomposition for $\vec v$.

		\item
		      Suppose our vectors $\vec w$ and $\vec v$ are in (\ref{eq:decomp-full/4}).
		      Thus, the pair of vectors $\Reverse(\vec w)$ and $\Reverse(\vec v)$ are related as in (\ref{eq:decomp-full/3}), and $\Reverse(\vec w)$ has an $n$-decomposition.
		      From \cref{prop:decomp2}, we obtain an $n'$-decomposition $E$, with $n'\geq n$, for $\Reverse(\vec v)$.
		      Thus, $\Reverse(E)$ is an $n'$-decomposition for $\vec v$.

		\item
		      Suppose our vectors $\vec w$ and $\vec v$ are in (\ref{eq:decomp-full/6}).
		      Thus, the pair of vectors $\Reverse(\vec w)$ and $\Reverse(\vec v)$ are related as in (\ref{eq:decomp-full/5}), and $\Reverse(\vec w)$ has an $n$-decomposition.
		      From \cref{prop:decomp3}, we obtain an $n'$-decomposition $E$, with $n'\geq n$, for $\Reverse(\vec v)$.
		      Thus, $\Reverse(E)$ is an $n'$-decomposition for $\vec v$.
	\end{itemize}
	Hence, we have proven each of the cases of the proposition.
\end{proof}

\subsection{Main result}\label{sec:main-proof/result}
We are now ready to prove our main result as follows.

\MainTheorem*

\begin{proof}
	Earlier in this section, we assumed for contradiction that there exists some \RMCFG\ $M = (\Sigma,Q,S,P)$ which is in normal form (as in \cref{lem:normal-form}) that generates the language $L$ as introduced in the beginning of \cref{sec:main-theorem}.
	Moreover, we let $C$ be the constant for the grammar $M$ as in \cref{lem:derivation_bound}.

	From \cref{lem:m-hom-recursive-definition}, we see that $\mathcal W\in L$ and so there must be a derivation for $\mathcal W$ in the grammar $M$ of the form
	\begin{multline*}
		S(\mathcal{W})\leftarrow
		H_1(\mathcal{W},\varepsilon, \varepsilon,\ldots,\varepsilon)
		\leftarrow
		H_2(w_{1,1}, w_{1,2},\ldots,w_{1,k})
		\\\leftarrow
		H_3(w_{2,1}, w_{2,2},\ldots,w_{2,k})
		\leftarrow
		\cdots
		\leftarrow
		H_{t+1}(w_{t,1}, w_{t,2},\ldots,w_{t,k})
		\\\leftarrow
		H_{t+2}(\varepsilon,\varepsilon,\ldots,\varepsilon) \leftarrow
	\end{multline*}
	for some $t\in \mathbb N$ where each $w_{i,j}\in \Sigma^*$, and each $H_i \in Q$.

	We may then define a sequence of tuples $\vec w_1, \vec w_2,\ldots,\vec w_{t+2}$ such that
	\[
		\vec w_1 = (\mathcal{W},\varepsilon,\varepsilon,\ldots,\varepsilon),\quad
		\vec w_{t+2} = (\varepsilon,\varepsilon,\ldots,\varepsilon)
		\quad\text{and}\quad
		\vec w_{i} = (w_{i-1,1}, w_{i-1,2},\ldots,w_{i-1,k})\\
	\]
	for each $i\in \{2,3,\ldots,t+1\}$.
	From the definition of the constant $C$ in \cref{lem:derivation_bound}, and the definition of an \RMCFG\ multiple context-free grammar, we see that each vector $\vec w_j$, as above, is a $\mathcal W$-sentential tuple as defined in \cref{def:sentential-tuple}.

	Notice here that the six cases of \cref{prop:decomp-full} correspond to the reverse of the insertion and merge replacement rules as in \cref{def:rule-insertion,def:rule-merge}.
	In particular, given an application of such a rule as
	\[
		H(w_1, w_2, \ldots,w_k) \leftarrow K(v_1,v_2,\ldots,v_k),
	\]
	the vectors $\vec w = (w_1,w_2,\ldots,w_k)$ and $\vec v = (v_1,v_2,\ldots,v_k)$ satisfy one of the relations in \cref{prop:decomp-full}.
	In particular, (\ref{eq:decomp-full/1}--\ref{eq:decomp-full/4}) correspond to left and right insertion rules (as in \cref{def:rule-insertion}); and (\ref{eq:decomp-full/5}) and (\ref{eq:decomp-full/6}) corresponds to left and right merge rules (as in \cref{def:rule-merge}).

	Hence, from \cref{prop:decomp-full}, for each $i\in \{1,2,\ldots,t+1\}$, we see that if $\vec w_i$ has an $n$-decomposition, then $\vec w_{i+1}$ has an $n'$-decomposition for some $n'\geq n$.
	Thus, by an induction on $i$, we find that if $\vec w_1$ has a decomposition, then so must $\vec w_{t+2}$.

	From \cref{cor:affix-of-w,lem:bound1,lem:affix-bound-to-decomposition}, we see that
	$
		E = \{(1,\leftSide),(1,\rightSide)\}
	$
	is a $2$-decomposition of the $\mathcal W$-sentential tuple
	$
		\vec w_1 = (\mathcal{W},\varepsilon,\varepsilon,\ldots,\varepsilon)
        $
	(as demonstrated in  \cref{remark:Compute2decompExplicitly}).
	Thus, the sentential tuple
$
		\vec w_{t+2} = (\varepsilon,\varepsilon,\ldots,\varepsilon)
	$
	must have a decomposition.
	However, it is not possible for $\vec w_{t+2}$ to have a decomposition since none of its components are $n$-heavy for any $n \in \{2,3,\ldots,2k\}$.
	In particular, for a component to be $n$-heavy, it must contain a factor $a^{2\Lambda_n}$ where each $\Lambda_n>0$ from \cref{lem:bound1}.
	We thus conclude that no such grammar $M$ for the language $L$ can exist, and thus $L$ cannot be an EDT0L language.
\end{proof}

\subsection{Corollary}
We can immediately generalise \cref{thm:main} to obtain \cref{thm:cormain}.

\MainCorollary*

\begin{proof}
	Suppose for contradiction that the group $G$ has an EDT0L word problem and an element $g\in G$ of infinite order, so $\left\langle g\right\rangle \cong \mathbb{Z}$.
  From \cref{lem:taking-a-submonoid-of-EDT0L}, it follows that the subgroup $\left\langle g\right\rangle\cong \mathbb Z$ has an EDT0L word problem with respect to the generating set $\{g,g^{-1}\}$.
  Thus, $\mathbb Z$ has an EDT0L word problem which contradicts \cref{thm:main}.
\end{proof}

\section*{Acknowledgements}
The first author acknowledges support from Swiss NSF grant 200020-200400.
The second author was supported by Australian Research Council grant DP210100271, and the London Mathematical Society 
Visiting Speakers to the UK--Scheme 2, 2024.
The third author was supported by the Heilbronn Institute for Mathematical Research.
The fifth author was supported by the Heilbronn Institute for Mathematical Research and the EPSRC Fellowship grant EP/V032003/1 `Algorithmic, topological and geometric aspects of infinite groups, monoids and inverse semigroups'. 

\bibliographystyle{plain}
\bibliography{main}

\makeatletter
\enddoc@text
\let\enddoc@text\relax
\makeatother

\clearpage
\appendix
\section{An alternative proof of \texorpdfstring{\cref{lem:taking-a-submonoid-of-EDT0L}}{Proposition~\protect\ref*{lem:taking-a-submonoid-of-EDT0L}}}

We note here that this paper is almost completely self-contained.
In particular, the only result which we do not prove is in \cref{sec:edt0l-wp} where we cite that the family of finite-index EDT0L languages is closed under inverse monoid homomorphism.
This result is  used in \cref{lem:taking-a-submonoid-of-EDT0L} to show that having an EDT0L word or co-word problem is invariant under change of generating set.
Our reason for structuring the paper in this way is to further motivate the study of finite-index EDT0L languages.

Here we give a short alternative proof of \cref{lem:taking-a-submonoid-of-EDT0L} which only uses results proven in this paper.
This alternative proof is a corollary to the following lemma which is itself a specialisation of a result of Gilman (see Theorem~4.26 and~4.29 in \cite{GilmanBook}).

\begin{lemma}\label{lem:taking-a-submonoid}
	Let $\mathcal F$ be a family of formal languages which contains the regular languages and is closed under mapping by string transducer;
  let $G$ be a group with finite monoid generating set $X$, and let $Y$ be a finite set that generates a submonoid of $G$.
	If $\mathrm{WP}(G,X)$ (resp.\@ $\mathrm{coWP}(G,X)$) lies in the family of languages $\mathcal F$, then $\mathrm{WP}(G,Y)$ (resp.\@ $\mathrm{coWP}(G,Y)$) also lies in $\mathcal F$.
\end{lemma}

\begin{proof}
	Notice that if $G$ is finite, then this result follows from the fact that the word problem of a finite group, and any submonoid thereof, is a regular language (see~\cite[Theorem 1]{Anisimov1971}).
	Thus, in the remainder of this proof, we may assume without loss of generality that $G$ is an infinite group.

	In the remainder of this proof, suppose that $Y = \{y_1,y_2,\ldots,y_n\}$.
	Notice that, for each $y_i\in Y$, we can choose a word $u_i\in X^*$ for which $\overline{y_i} =_G \overline{u_i}$.
	From \cref{lem:example-antichain}, there exists a choice of words $w_1,w_2,\ldots,w_n,w_{n+1}\in X^*$ such that
	$
		W
		=
		\{
		w_1 u_1, w_2 u_2,\ldots,w_n u_n
		\}
	$
	is an antichain with respect to the prefix order where each $\overline{w_i}=1$.

	We define a map $f\colon \mathcal{P}(X^*)\to\mathcal{P}(Y^*)$ as
	\[
		f(L)
		=
		\left\{
		y_{i_1} y_{i_2} \cdots y_{i_k} \in Y^*
		\ \middle|\
		\begin{aligned}
			(w_{i_1} u_{i_1})(w_{i_2} u_{i_2})\cdots(w_{i_k} u_{i_k}) \in L
			\\\text{ where each }
			i_j \in \{1,2,\ldots,n\}
		\end{aligned}
		\right\}.
	\]
	From \cref{lem:f_is_detfsm}, we see that $f$ is a string transducer.

	We that $f(\mathrm{WP}(G,X)) = \mathrm{WP}(G,Y)$ and $f(\mathrm{coWP}(G,X)) = \mathrm{coWP}(G,Y)$.
	Since the family $\mathcal F$ is closed under string transduction, we conclude that $\mathrm{WP}(G,Y)$ and $\mathrm{coWP}(G,Y)$ both belong to $\mathcal F$.
\end{proof}

We then obtain an alternative proof of \cref{lem:taking-a-submonoid-of-EDT0L} as follows.

\CorInvariantsDFSM*

\begin{proof}
This follows by combining \cref{lem:taking-a-submonoid} with \cref{lem:reg_is_edt0l,lem:edt0l-closed-under-fs-transduction}, and \cref{prop:wp-edt0l-fi}.
\end{proof}

\end{document}